\documentclass[a4paper,reqno]{amsart}

\parindent=15pt
\parskip=3pt
\setlength{\textwidth}{7in}
\setlength{\oddsidemargin}{-24pt}
\setlength{\evensidemargin}{-24pt}
\setlength{\textheight}{9.2in}
\setlength{\topmargin}{-5pt}

\usepackage{amsfonts}
\usepackage{amsmath}

\usepackage[colorlinks]{hyperref}
\usepackage{amssymb}
\usepackage{tikz}
\usepackage{color}
\usepackage[all]{xy}
\usepackage{soul}
\usepackage{enumerate}
\usepackage{bm}
\usepackage{bbm}
\usepackage[normalem]{ulem}
\usepackage{mathrsfs}



\setcounter{MaxMatrixCols}{10}

\setlength{\oddsidemargin}{0cm} \setlength{\evensidemargin}{0cm}
\setlength{\topmargin}{0cm} \setlength{\textwidth}{16cm}
\setlength{\textheight}{23cm}
\allowdisplaybreaks
\numberwithin{equation}{section}
\newtheorem{theorem}{Theorem}[section]
\newtheorem{proposition}[theorem]{Proposition}
\newtheorem{lemma}[theorem]{Lemma}
\newtheorem{corollary}[theorem]{Corollary}

\theoremstyle{definition}
\newtheorem{definition}[theorem]{Definition}

\theoremstyle{remark}
\newtheorem{remark}[theorem]{Remark}
\newtheorem{example}[theorem]{Example}

\newcommand{\NN}{\mathbb{N}}

\newcommand{\ZZ}{\mathbb{Z}}
\newcommand{\MCE}{\operatorname{MCE}}
\newcommand{\FE}{\operatorname{FE}}
\newcommand{\Ext}{\operatorname{Ext}}

\newcommand{\KP}{\operatorname{KP}}
\newcommand{\G}{\mathcal G}
\newcommand{\ep}{\varepsilon}
\newcommand{\g}{\gamma}
\newcommand{\ga}{\gamma}
\newcommand{\de}{\delta}
\newcommand{\Ga}{\Gamma}
\newcommand{\cod}{\bm{c}}
\newcommand{\dom}{\bm{d}}
\newcommand\Gr[1][]{{\operatorname{{\bf Gr}^{#1}-}}}
\newcommand{\id}{\operatorname{id}}
\newcommand{\supp}{\operatorname{supp}}
\newcommand\Modd[1][]{{\operatorname{{\bf Mod}^{#1}-}}}

\newcommand{\Span}{\operatorname{span}}
\newcommand{\gr}{\operatorname{gr}}

\title[Strongly graded groupoids and strongly graded Steinberg algebras]{Strongly graded groupoids\\ and strongly graded Steinberg algebras}

\author[L.O. Clark]{Lisa Orloff Clark}

\address{Lisa Orloff Clark: School of Mathematics and Statistics, 
Victoria University of Wellington, New Zealand}
\email{lisa.clark@vuw.ac.nz}

\author[R. Hazrat]{Roozbeh Hazrat}

\address{Roozbeh Hazrat: Centre for Research in Mathematics, 
Western Sydney University, Australia}
\email{r.hazrat@westernsydney.edu.au}

\author[S.W. Rigby]{Simon W. Rigby}

\address{Simon W. Rigby: Department of Mathematics and
 Applied Mathematics, University of Cape Town, South~Africa}

\email{rgbsim001@myuct.ac.za}

\begin{document}

\begin{abstract} 
	We study strongly graded groupoids, which are topological groupoids $\G$ equipped with a continuous, surjective functor $\kappa: \G \to \Gamma$, to a discrete group $\Gamma$, such that $\kappa^{-1}(\gamma)\kappa^{-1}(\delta) = \kappa^{-1}(\gamma \delta)$, for all $\gamma, \delta \in \Gamma$. We introduce the category of graded $\G$-sheaves, and prove an analogue of Dade's Theorem: $\G$ is strongly graded if and only if every graded $\G$-sheaf is induced by a $\G_{\ep}$-sheaf. The Steinberg algebra of a graded ample groupoid is graded, and we prove that the algebra is strongly graded if and only if the groupoid is. Applying this result, we obtain a complete graphical characterisation of strongly graded Leavitt path and Kumjian-Pask algebras.
\end{abstract}

\maketitle

\section{Introduction}

Graded rings possess a high degree of structure, or rigidity, that often compensates for otherwise exotic behaviour. A ring $A$ is \textit{graded} by a group $\Ga$ if it has a decomposition into additive subgroups $A = \bigoplus _{\ga \in \Ga}A_\ga$ such that $A_\ga A_\de \subseteq A_{\ga \de}$ for all $\gamma, \delta \in \Ga$. The ring is \textit{strongly graded} if 
$A_\gamma A_\delta=A_{\gamma\delta}$ for all $\gamma,\delta\in \Gamma$.
From our point of view, strong grading is the very best kind of grading. Strongly graded rings were studied extensively by 
Everett Dade. Among other theorems, he proved in \cite{dade1} that $A$ is strongly graded if and only if the category of graded $A$-modules is naturally equivalent to the category of $A_\ep$-modules (where $\ep$ is the identity in $\Gamma$). In other words, the $\ep$-component ``speaks" for the whole ring and carries some information about its other homogeneous components. If $A$ is strongly graded,  there is a one-to-one correspondence between the graded left ideals of $A$ and the left ideals of $A_\ep$ (see \cite[Remark 1.5.6]{rh}). It follows that $A$ satisfies the ascending/descending chain condition on graded left ideals if and only if $A_\varepsilon$ satisfies the ascending/descending chain conditions on left ideals. In a similar vein, a strongly graded ring is graded von Neumann regular (every homogeneous element has a von Neumann inverse) if and only if its $\ep$-component is von Neumann regular.

In this paper, we develop a theory of strongly graded \'etale groupoids. In many respects, it resembles the classical theory of strongly graded rings. A topological groupoid $\G$ is \textit{$\Gamma$-graded} (by a discrete group $\Gamma$) if there is a continuous functor $\kappa: \G \to \Gamma$. Equivalently, $\G$ is a disjoint union of clopen subsets $\G = \bigsqcup_{\gamma \in \Gamma} \G_\gamma$ such that $\G_\gamma \G_\delta \subseteq \G_{\gamma \delta}$ for all $\gamma, \delta \in \Gamma$. In analogy with graded rings, we say that $\G$ is \textit{strongly graded} if $\G_\gamma \G_\delta = \G_{\gamma \delta}$ for all $\gamma, \delta \in \Gamma$. It is much easier to work with gradings on groupoids than it is to work with gradings on rings and, in particular, it is much easier to verify if a groupoid is strongly graded. To illustrate this, in a graded ring $A = \bigoplus_{\gamma \in \Gamma} A_\gamma$, the set $A_\gamma A_\delta$ contains not only those elements of the form $a_\gamma a_\delta$, where $a_\gamma \in A_\gamma$ and $a_\delta \in A_\delta$, but also all finite sums of such elements. In contrast, if $\G = \bigsqcup_{\gamma \in \Gamma}\G_\gamma$ is a graded topological groupoid, then $\G_\gamma \G_\delta$ is nothing more than the set of elements of the form $g_\gamma g_\delta$, where $g_\gamma \in \G_\gamma$ and $g_\delta \in \G_\delta$. There are no sums to worry about. Moreover, every subgroupoid of a graded groupoid is graded, while not every subring of a graded ring is graded. Every element of a graded groupoid is homogeneous, while most elements in a graded ring are not homogeneous.

It is a common theme in mathematics that the theory of an algebraic structure benefits from an extrinsic approach. For instance, studying $G$-sets, on which a group $G$ acts, often illuminates the properties of $G$. 
Dade's Theorem shows that the ``internal" definition of strong grading for rings has an equivalent ``external" characterisation: one that takes meaning in the category of modules.
In many situations, a \textit{sheaf} is the appropriate kind of external structure on which an {\'etale groupoid} should act. That is, sheaves are to groupoids what modules are to rings. With this principle in mind, we define the category of \textit{graded $\G$-sheaves}, associated to a graded \'etale groupoid $\G$. In Theorem \ref{strgro}, we prove a groupoid-theoretic analogue of Dade's Theorem. It says that an ample groupoid is strongly graded if and only if every graded $\G$-sheaf is ``induced" by a $\G_\ep$-sheaf. This result gives an external characterisation of strong grading for groupoids.

While this theory is interesting in its own right, our main application is the study of strongly graded Steinberg algebras.
Steinberg algebras $A_R(\G)$ are convolution algebras of functions from an ample groupoid $\G$ to a commutative ring $R$. They first appeared independently in papers by  Steinberg \cite{steinberg1} and Clark, Farthing, Sims, and Tomforde \cite{CF}.
The primary motivation for constructing these algebras was to generalise other classes of algebras, especially inverse semigroup algebras and Leavitt path algebras.
Steinberg algebras provide a unifying theory and a new way of studying these seemingly disparate classes of algebras. For instance, various papers \cite{steinberg2016simplicity,steinberg2018chain,steinberg2018prime} have used Steinberg algebras to characterise, in terms of the underlying graph or inverse semigroup, when a Leavitt path algebra or inverse semigroup algebra is simple, (semi)prime, (semi)primitive, noetherian, or artinian. The Steinberg algebra model has also been put to use in \cite{clark2016using, steinberg1} to describe the centres of these algebras. Other classes of algebras that arise as Steinberg algebras include partial skew group rings associated to topological partial dynamical systems \cite{BG}, and the higher-rank analogues of Leavitt path algebras, known as Kumjian-Pask algebras \cite{CP}. Additionally, the theory of Steinberg algebras has succeeded in producing algebras with interesting prescribed properties, including the first examples of simple algebras of arbitrary Gelfand-Kirrilov dimension~\cite{nekrashevych2016growth}.
 
Graded ample groupoids produce graded Steinberg algebras, and many well-studied classes of Steinberg algebras (including Leavitt path algebras, Kumjian-Pask algebras, and partial skew group rings) receive a graded structure in this way \cite{CP,CS,hli}. We prove, in Theorem \ref{strong}, that the Steinberg algebra $A_R(\G)$ is strongly graded if and only if the groupoid $\G$ is strongly graded. This theorem is useful for applications, and it enables us to characterise the strong grading property for each of the classes mentioned above.
Moreover, we prove a graded version of Steinberg's Equivalence Theorem from \cite{steinberg2}, and use it to show that Theorems \ref{strgro} and \ref{strong} are equivalent in the sense that either one can be derived from the other.

We have a special interest in Leavitt path algebras, not least because they have provoked some of the most interesting recent developments in graded ring theory. Leavitt path algebras, first introduced in \cite{abrams05, ara07}, are $\ZZ$-graded algebras whose generators and relations are encoded in a directed graph. The construction is somewhat similar to (and indeed motivated by) the  graph $C^*$-algebra construction. It is not an exaggeration to say that the $\mathbb{Z}$-graded structure of Leavitt path algebras is what makes it possible to study them so successfully. For instance, in the very first paper on Leavitt path algebras, Abrams and Aranda Pino applied a decomposition into homogeneous components to establish a criterion for simplicity (see \cite[Theorem 3.11]{abrams05}). The same authors, in \cite{abrams08}, used graded ideals to prove that every Leavitt path algebra over a field is semiprime and semiprimitive. The graded Grothendieck group is a very important concept that emerged from the study of these $\ZZ$-graded algebras, and it is the most promising invariant in the ongoing classification programme for Leavitt path algebras (see~\cite[\S 7.3]{LPAbook} and \cite{rhann}).

Strong grading is especially important for Leavitt path algebras, because the 0-component of every Leavitt path algebra is ultramatricial (see \cite[Theorem 5.3]{ara07}). Many of the ``good" properties of those ultramatricial 0-components, like von Neumann regularity, are then passed to the other components via strong grading (when it is present). Strong grading has also been applied to calculate the graded Grothendieck groups of some Leavitt path algebras \cite[\S3.9.3]{rh}. Strongly graded Leavitt path algebras have even found an application in noncommutative algebraic geometry, where Paul Smith \cite{smith} uses them to give an expression of the quotient category of graded modules over a path algebra, modulo those that are the sum of their finite-dimensional submodules.

However, not every Leavitt path algebra is strongly graded, so it is valuable to understand which ones are. 
A characterisation of strongly graded Leavitt path algebras was known for graphs with finitely many edges and vertices: it is necessary and sufficient that every vertex connects to a cycle (see \cite[Theorem 3.15]{rhs}). Our new techniques lead quite easily to a complete characterisation of strongly graded Leavitt path algebras, for graphs of any size. We are able to prove a much more general result, characterising the strong grading property for $\ZZ^k$-graded Kumjian-Pask algebras of higher-rank graphs.

The structure of the paper moves from abstract to concrete. In Section \ref{prelim}, we extend some of the basic concepts in graded ring theory to the setting of graded rings with local units, among them Dade's Theorem (Theorem \ref{dadesthm}). We recall the notion of a graded groupoid $\G$ and its associated Steinberg $R$-algebra $A_R(\G)$, which is a graded algebra with local units. We recall the notion of a $\G$-sheaf, in preparation for introducing the category of graded $\G$-sheaves in Section~\ref{hgfhgfgf3}. In Section~\ref{hgfhgfgf3}, we prove the groupoid version of Dade's Theorem (Theorem~\ref{strgro}) and then we establish our main theorem that a Steinberg algebra $A_R(\G)$ is strongly graded if and only if $\G$ is so (Theorem~\ref{strong}).  We then establish an equivalence between the category of graded $\G$-sheaves of $R$-modules and the category of graded modules over the Steinberg $R$-algebra associated to $\G$. 
%
%
In Section \ref{applications}, we apply our results to graph algebras: both to Leavitt path algebras and to Kumjian-Pask 
algebras, their higher-rank analogues. We use the groupoid model to give graphical characterisations of strongly graded
graph algebras (Theorem~\ref{thm:LPA} and Theorem~\ref{thm:KP}).  As emphasised, this demonstrates why it is much easier to check that a groupoid is strongly graded than it is to check the corresponding criterion for the ring itself.  We also apply our results to partial group actions, showing that the algebra associated to a partial action of a discrete group on a totally disconnected space is strongly graded if and only if the action is a global action (Proposition~\ref{paction}).


\section{Preliminaries} \label{prelim}
A \textit{groupoid} is a small category in which every morphism has an inverse 
(see \cite[Definition 1.1]{renault} for an alternative definition). If $\G$ is a groupoid, 
we write $\G^{(0)}$ for the set of objects (called the \textit{unit space}) and we identify objects with their identity morphisms. 
We denote by $\dom$ and $\cod$ the \textit{domain} and \textit{codomain} maps $\dom, \cod: \G \to \G^{(0)}$ and write $\G^{(2)}\subseteq \G \times \G$ for the set of composable pairs, which are pairs $(x, y)$ with $\cod(y) = \dom(x)$. We also write $\G^x := \dom^{-1}(x)$ and $^x\G := \cod^{-1}(x)$.
A \textit{topological groupoid} is a groupoid $\G$ equipped with a topology such that 
inversion $\bm{i}: \G \to \G$ and composition $\bm{m}: \G^{(2)} \to \G$ are continuous, 
where $\G^{(2)}$ has the relative product topology. It is assumed that the unit space $\G^{(0)}$ has the relative topology (as a subspace of $\G$). In case $\dom$ is a local homeomorphism, 
we say that $\G$ is an \textit{\'etale groupoid}. If $\G$ is \'etale, it follows
that $\cod$ is also a local homeomorphism. If 
$U \subseteq \G$ is open and both $\dom|_{U}$ and $\cod|_{U}$ are injective 
then they are homeomorphisms onto their images, and $U$ is called an 
\textit{open bisection} (or a slice, or an open $\G$-set in some literature). 
The topology on an \'etale groupoid is generated by a basis of open bisections, 
and the unit space itself is an open bisection \cite[Proposition 3.2]{exel1}.

A topological groupoid $\G$ is called \textit{ample} if it has a basis of compact open bisections and $\G^{(0)}$ is 
Hausdorff. Equivalently, $\G$ is ample if it is \'etale and $\G^{(0)}$ is Hausdorff, locally compact, and 
totally disconnected \cite[Proposition 4.1]{exel2}. Let $B^{\rm co}(\G) = \{B \subseteq \G \mid B \text{ is a compact open bisection} \}$ and $B^{\rm co}(\G^{(0)}) = \{U \in B^{\rm co}(\G) \mid U \subseteq \G^{(0)}\}$.
In an ample groupoid $\G$, the set $B^{\rm co}(\G)$
is an inverse semigroup, with products defined as
$
UV = \big \{uv \mid (u, v) \in U\times V \cap \G^{(2)} \big \}
$
and inverses $U^{-1} = \{u^{-1} \mid u \in U\}$.

\subsection{Steinberg algebras}

Let $R$ be a unital commutative ring.  Following 
\cite{steinberg1},  one can construct an associative $R$-algebra from an ample groupoid. Let $\bm{1}_B: \G \to R$ be the characteristic function
of a compact open bisection $B \subseteq \G$.

\begin{definition} \label{st-def}
	Let $\G$ be an ample groupoid. Define $A_R(\G)$ as the $R$-submodule of $R^\G$ generated by the set	$\{\bm{1}_B  \mid B \in B^{\rm co}(\G)\}$.
	The \textit{convolution} of $f, g \in A_R(\G)$ is defined as 
	\begin{align*}
	f*g(x)= \sum_{\substack{y \in \G\\ \dom(y) = \dom(x)}}f(xy^{-1})g(y) = \sum_{\substack{(z, y)\in \G^{(2)} \\ zy  = x}} f(z)g(y) && \text{for all } x \in \G.
	\end{align*}
	The $R$-module $A_R(\G)$,  with the convolution, is called the \textit{Steinberg algebra} of $\G$ over $R$.
\end{definition}

On the generators, the convolution reduces to the formula 
$\bm{1}_B * \bm{1}_C = \bm{1}_{BC}$ for all $B, C  \in B^{\rm co}(\G)$. In particular, $\bm{1}_U* \bm{1}_V = \bm{1}_{U \cap V}$ whenever $U, V \in B^{\rm co}(\G^{(0)})$. Note that $A_R(\G)$ is unital if and only if $\G^{(0)}$ is compact, in which case $1 = \bm{1}_{\G^{(0)}}$ (see \cite[Propositon 4.11]{steinberg1}). If $\G$ is Hausdorff, $A_R(\G)$ equals the set of locally constant,
compactly supported $R$-valued functions on $\G$.


\subsection{Graded rings with graded local units}

Let $\Gamma$ be a group with identity $\varepsilon$. A ring $A$ (possibly without unit)
is called a \emph{$\Gamma$-graded ring} if $A=\bigoplus_{\gamma \in \Gamma} A_{\gamma}$,
where each $A_{\gamma}$ is an additive subgroup of $A$ and $A_{\gamma}  A_{\delta}
\subseteq A_{\gamma\delta}$,  for all $\gamma, \delta \in \Gamma$. By definition, $A_\gamma A_\delta$ is the additive subgroup generated by all terms $a_\gamma a_\delta$ where $a_\gamma \in A_\gamma$ and $a_{\delta} \in A_\delta$. The group $A_\gamma$ is
called the $\gamma$-\emph{homogeneous component} of $A$ (or sometimes just the \textit{$\gamma$-component}). The elements of $\bigcup_{\gamma \in \Gamma}A_\gamma$ are called \textit{homogeneous}, and the nonzero elements of $A_\gamma$ are called \textit{homogeneous of degree~$\gamma$}. If $a \in A$, we write $a = \sum_{\gamma \in \Gamma} a_\gamma$ for the unique expression of $a$ as a sum of homogeneous terms $a_\gamma \in A_\gamma$.  When it is clear from context
that the ring $A$ is graded by the group $\Gamma$, we simply say that $A$ is a \emph{graded
ring}. If $A$ is an $R$-algebra over a commutative unital ring $R$, then $A$ is called a \emph{graded algebra} if it 
is a graded ring and each $A_{\gamma}$ is an $R$-submodule of $A$. A \textit{graded homomorphism} of 
$\Gamma$-graded rings is a homomorphism $f: A \to B$ such that $f(A_\g) \subseteq B_\g$ for every $\g \in \Gamma$.
If a $\Gamma$-graded ring $A=\bigoplus_{\g\in\Ga}A_{\g}$ has the property that $A_{\gamma}A_{\delta}=A_{\gamma\delta}$ for all $\gamma,\delta\in \Ga$, then $A$ is called \emph{strongly $\Gamma$-graded}, 
or just \textit{strongly graded} if $\Gamma$ is clear from context.

We say that a ring $A$ has \emph{local units} if there is a set of idempotents $E \subseteq A$ and for every finite subset $F \subseteq A$ there exists $e \in E$ such that $exe = x$ for each $x \in F$. If $A$ is $\Gamma$-graded then we say $A$ has \textit{graded local units} if $E$ can be chosen as a subset of $A_\ep$.  (It appears to be unknown whether there exist graded rings with local units that are not graded rings with graded local units.) 


If $A$ is a $\Gamma$-graded ring and $\Omega$ is a normal subgroup of $\Gamma$, then it gives rise to two more graded rings: $A_\Omega := \bigoplus_{\omega \in \Omega}A_{\omega}$ is an $\Omega$-graded ring (with the \textit{subgroup grading}), and 
\begin{align}  \label{quot grad}
A = \bigoplus_{\Omega \gamma \in \Gamma / \Omega} A_{\Omega \gamma}, && \text{where }\ A_{\Omega \gamma} := \bigoplus_{\omega \in \Omega} A_{\omega \gamma} \text{ for all } \Omega \gamma\in \Gamma/\Omega,
\end{align}
is a $\Gamma/\Omega$-graded ring (with the \textit{quotient grading}).

Let $M$ be a right $A$-module. We say $M$ is \textit{unital} if $M=MA$. If $A$ is a $\Gamma$-graded ring, $M$ is called a \textit{graded $A$-module} if it has a decomposition $M=\bigoplus_{\g\in\Gamma}M_{\g}$ where each $M_\gamma$ is an additive subgroup of $M$ and
$M_{\gamma}A_\delta\subseteq M_{\g\delta}$ for all $\gamma,\delta \in \Gamma$. If $A$ has graded local units and $M$ is a unital graded $A$-module, then each $M_\gamma$ is a unital $A_\ep$-module. A graded homomorphism between graded $A$-modules is an $A$-module homomorphism $f: M \to N$ 
such that $f(M_\g) \subseteq N_\g$ for every $\g \in \Gamma$. We denote by $\Modd A$
the category of unital right $A$-modules and by $\Gr A$ the category of
unital graded right $A$-modules with graded homomorphisms.

For a graded right $A$-module $M$, and $\alpha \in \Gamma$, we define the $\alpha$-\emph{shifted} graded right
$A$-module $M(\alpha)$ as
\begin{equation}\label{eq:M-shifted}
M(\alpha)=\bigoplus_{\g\in \Ga}M(\alpha)_{\g},
\end{equation}
where $M(\alpha)_{\g}=M_{\alpha \gamma}$. That is, as an ungraded module, $M(\alpha)$ is a copy of
$M$, but the grading is shifted by $\alpha$. For $\alpha\in\Gamma$, the \emph{shift functor}
$
\mathcal{T}_{\alpha}: \Gr A\rightarrow \Gr A,$ $M\mapsto M(\alpha)$,
is an auto-equivalence with the property $\mathcal{T}_{\alpha}\mathcal{T}_{\beta}=\mathcal{T}_{\alpha\beta}$ for all $\alpha,\beta\in\Gamma$, and $\mathcal{T}_\ep = \mathrm{id}_{\Gr A}$.

The proof of the following lemma is straightforward. 

\begin{lemma} \label{first-lem}
	Let $A$ be a $\Gamma$-graded ring with graded local units. The following are equivalent: 
	
	\begin{enumerate}[\upshape(1)]
		\item $A$ is strongly graded.
		
		\smallskip

		\item $A_\gamma A_{\gamma^{-1}}=A_\varepsilon$ for every $\gamma \in \Gamma$.
		
		\smallskip

		\item For every $\g \in \Ga$, the set of graded local units is contained in $A_\gamma A_{\gamma^{-1}}$.
	\end{enumerate}
\end{lemma}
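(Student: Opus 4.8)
The plan is to prove the cyclic chain of implications $(1) \Rightarrow (2) \Rightarrow (3) \Rightarrow (1)$, since the first two implications are essentially immediate and the real content lies in deducing strong grading from condition (3).

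For $(1) \Rightarrow (2)$ I would simply specialise the defining identity $A_\gamma A_\delta = A_{\gamma\delta}$ to $\delta = \gamma^{-1}$, which gives $A_\gamma A_{\gamma^{-1}} = A_\varepsilon$ at once. For $(2) \Rightarrow (3)$ I would observe that, by the definition of graded local units, the idempotents $E$ lie inside $A_\varepsilon$; hence if $A_\varepsilon = A_\gamma A_{\gamma^{-1}}$ then trivially $E \subseteq A_\gamma A_{\gamma^{-1}}$ for every $\gamma \in \Gamma$.

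The crux is $(3) \Rightarrow (1)$. Since the inclusion $A_\gamma A_\delta \subseteq A_{\gamma\delta}$ holds in any graded ring, it suffices to establish the reverse inclusion $A_{\gamma\delta} \subseteq A_\gamma A_\delta$ for arbitrary $\gamma, \delta \in \Gamma$. I would take an arbitrary $x \in A_{\gamma\delta}$ and use the local-unit property to produce a graded local unit $e \in E \subseteq A_\varepsilon$ with $exe = x$; since $e$ is idempotent this yields the one-sided identity $ex = x$ (indeed $ex = e(exe) = e^2 xe = exe = x$). Condition (3) then lets me factor $e = \sum_i a_i b_i$ with $a_i \in A_\gamma$ and $b_i \in A_{\gamma^{-1}}$, so that $x = ex = \sum_i a_i (b_i x)$. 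Because $b_i x \in A_{\gamma^{-1}} A_{\gamma\delta} \subseteq A_{\gamma^{-1}\gamma\delta} = A_\delta$, each summand lies in $A_\gamma A_\delta$, whence $x \in A_\gamma A_\delta$, as required.

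The main (and essentially only) obstacle is the absence of a global identity: in the unital case one would simply write $x = 1 \cdot x$ and factor $1$ through $A_\gamma A_{\gamma^{-1}}$, but here I must replace $1$ by a graded local unit adapted to $x$ and check that the one-sided action $ex = x$ follows from the two-sided condition $exe = x$ together with idempotency. Once this local-unit substitution is in place, the degree bookkeeping $A_{\gamma^{-1}} A_{\gamma\delta} \subseteq A_\delta$ closes the argument, and the reverse inclusion is obtained for every pair $\gamma, \delta$.
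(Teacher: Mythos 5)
Your proof is correct, and it is exactly the standard argument the paper has in mind when it declares the lemma straightforward and leaves the proof to the reader (cf.\ the unital case in \cite[\S 1.5]{rh}): the only nontrivial implication is $(3)\Rightarrow(1)$, handled by replacing the identity with a graded local unit $e$ satisfying $exe=x$, noting $ex=x$, factoring $e\in A_\gamma A_{\gamma^{-1}}$, and using $A_{\gamma^{-1}}A_{\gamma\delta}\subseteq A_\delta$. No gaps; the degree bookkeeping and the use of idempotency to pass from the two-sided to the one-sided unit property are both valid.
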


Next, we introduce the functors involved in Dade's Theorem.
Consider the  \emph{restriction functor}, mapping a graded $A$-module $M$ to the $A_\ep$-module $M_\ep$:
\begin{align}\label{funIri}
\mathcal I: \Gr A & \longrightarrow \Modd A_\ep\\
M & \longmapsto M_\ep \notag\\
\psi & \longmapsto \psi|_{M_\ep} \notag.
\end{align}
Consider the  \emph{induction functor}, mapping an $A_\ep$-module $N$ to the graded $A$-module $N \otimes_{A_\ep} A$:
\begin{align}\label{funIIri}
\mathcal J : \Modd A_\ep & \longrightarrow \Gr A \\
N & \longmapsto N \otimes_{A_\ep} A \notag\\
\phi & \longmapsto \phi \otimes \id .\notag
\end{align}
The grading on $N \otimes_{A_\ep} A$ is defined by setting $(N \otimes_{A_\ep} A)_\gamma = N \otimes_{A_\ep} A_\gamma$.
One can easily check that 
$\mathcal I \mathcal J \cong \id_{\Modd A_\ep}$ with the natural isomorphism:
\begin{align}\label{hgy4nd1}
\mathcal I \mathcal J (N)= \mathcal I (N\otimes_{A_\ep} A) =N\otimes_{A_\ep} A_\ep & \overset{\cong}{\longrightarrow} N,\\
n\otimes a &\longmapsto na  \notag.
\end{align}
On the other hand, there is a natural transformation:
\begin{align}\label{hgy4nd}
\mathcal J \mathcal I (M)= \mathcal J (M_\ep)= M_\ep\otimes_{A_\ep} A & \longrightarrow M,  \\* 
m\otimes a &\longmapsto ma  \notag.
\end{align}
The theorem below is Dade's Theorem in the setting of graded rings with graded local units.
The proof is similar to the case for unital rings, so we leave it to the reader (see~\cite[\S 1.5]{rh} and \cite{dade1}).

\begin{theorem}  \label{dadesthm} 
	Let $A$ be a $\Ga$\!-graded ring with graded local units. Then $A$ is strongly $\Gamma$\!-graded if and only if 
	the functors  $\mathcal I$ and $\mathcal J$ {\upshape (see (\ref{funIri}), (\ref{funIIri}))}  are mutually inverse equivalences of categories. 
\end{theorem}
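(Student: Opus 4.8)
The plan is to reduce the whole statement to a single assertion about the canonical counit. Since the natural isomorphism $\mathcal I \mathcal J \cong \id_{\Modd A_\ep}$ of (\ref{hgy4nd1}) holds for \emph{any} graded ring with graded local units, all of the content is carried by the natural transformation $\eta \colon \mathcal J \mathcal I \Rightarrow \id_{\Gr A}$ of (\ref{hgy4nd}), namely $\eta_M(m \otimes a) = ma$. Because each $\eta_M$ is a graded homomorphism, it splits into its homogeneous components $\eta_M^\gamma \colon M_\ep \otimes_{A_\ep} A_\gamma \to M_\gamma$, $n \otimes a \mapsto na$, and the theorem becomes: $A$ is strongly graded if and only if every $\eta_M^\gamma$ is an isomorphism. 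I would record at the outset that $(\mathcal J, \mathcal I)$ is the usual tensor-hom adjoint pair, with unit the inverse of (\ref{hgy4nd1}) and counit $\eta$.

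For the forward direction, assume $A$ is strongly graded and fix a graded right module $M$ and $\gamma \in \Ga$. Surjectivity of $\eta_M^\gamma$ is immediate: since $M_\gamma$ is a unital $A_\ep$-module and $A_{\gamma^{-1}} A_\gamma = A_\ep$ by Lemma \ref{first-lem}, we get $M_\gamma = M_\gamma A_\ep = (M_\gamma A_{\gamma^{-1}}) A_\gamma \subseteq M_\ep A_\gamma \subseteq M_\gamma$, so $M_\ep A_\gamma = M_\gamma$. For injectivity I would construct an explicit inverse. Given $m \in M_\gamma$, choose a graded local unit $e \in A_\ep$ with $me = m$; as $e \in A_\ep = A_{\gamma^{-1}}A_\gamma$, write $e = \sum_i c_i d_i$ with $c_i \in A_{\gamma^{-1}}$ and $d_i \in A_\gamma$, and set $\phi(m) = \sum_i m c_i \otimes d_i$, which lies in $M_\ep \otimes_{A_\ep} A_\gamma$ because $m c_i \in M_\gamma A_{\gamma^{-1}} \subseteq M_\ep$. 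Then $\eta_M^\gamma(\phi(m)) = \sum_i m c_i d_i = me = m$, while for the reverse composite one moves the $A_\ep$-element $a c_i \in A_\gamma A_{\gamma^{-1}}$ across the tensor, exactly as in the unital proof. The hard part here is the local-unit bookkeeping: one must choose $e$ to dominate all of the finitely many elements occurring in a given tensor, and then check that $\phi$ is well defined independently of these choices.

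For the converse, suppose $\mathcal I$ and $\mathcal J$ are mutually inverse equivalences; in particular $\mathcal J$ is essentially surjective. I would first verify that $\eta_P$ is an isomorphism whenever $P = \mathcal J(N) = N \otimes_{A_\ep} A$ is induced: here $\eta_P$ sends $(n \otimes b) \otimes a \mapsto n \otimes ba$, which, since $b \in A_\ep$, coincides with the canonical isomorphism $(N \otimes_{A_\ep} A_\ep) \otimes_{A_\ep} A \cong N \otimes_{A_\ep} A$. Now for an arbitrary graded module $M$, essential surjectivity gives an isomorphism $f \colon \mathcal J(N) \to M$, and naturality of $\eta$, expressed by $\eta_M \circ \mathcal J \mathcal I(f) = f \circ \eta_{\mathcal J(N)}$, forces $\eta_M$ (hence every $\eta_M^\gamma$) to be an isomorphism. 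Applying this to the shifted module $M = A(\gamma^{-1})$, for which $M_\ep = A_{\gamma^{-1}}$ and $M_\gamma = A_\ep$ by (\ref{eq:M-shifted}), surjectivity of $\eta_M^\gamma$ yields $A_{\gamma^{-1}} A_\gamma = A_\ep$ for every $\gamma \in \Ga$; by Lemma \ref{first-lem} this means $A$ is strongly graded.

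I expect the principal obstacle to be entirely in the forward direction, in the passage from the unital identity $1 = \sum_i c_i d_i$ to a local unit adapted to the elements at hand, and in checking that the resulting inverse is well defined and natural. The converse, by contrast, is essentially formal once one observes that the concrete counit $\eta$ is already an isomorphism on induced modules and that this property transports along the isomorphisms supplied by essential surjectivity.
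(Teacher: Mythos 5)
Your proof is correct and is essentially the argument the paper intends: the paper deliberately omits this proof, deferring to the unital case in \cite[\S 1.5]{rh} and \cite{dade1}, and your argument is exactly that standard route adapted to local units---surjectivity of the counit via $M_\gamma = M_\gamma A_{\gamma^{-1}}A_\gamma \subseteq M_\ep A_\gamma$, injectivity via decomposing a dominating local unit $e = \sum_i c_i d_i \in A_{\gamma^{-1}}A_\gamma$ and moving $\ep$-degree products across the tensor, and the converse via the shifted modules $A(\gamma^{-1})$. Your additional care in the converse, where you deduce that the \emph{canonical} counit $\eta$ is invertible from an abstract equivalence (invertibility on induced modules plus naturality and essential surjectivity), correctly handles the subtlety the paper itself flags by citing \cite[Example 3.2.4]{nasta}.
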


Note that it is possible to have an equivalence between the categories $\Gr A$ and $\Modd A_\ep$, without $A$ being strongly graded; see \cite[Example 3.2.4]{nasta}. Hence, the functors $\mathcal{I}$ and $\mathcal{J}$ are an essential part of Dade's Theorem.

Applying Dade's Theorem twice we can prove the following lemma. We give an element-wise proof of the lemma, as this paves the way for the proof of the groupoid version of it (see Lemma~\ref{first-lemgrgr}). 

\begin{lemma} \label{corr}
	Let $A$ be a $\Ga$-graded ring with graded local units, and let $\Omega \lhd \Gamma$. Then $A$ is strongly $\Ga$-graded if and only if $A$ is strongly $\Ga/\Omega$-graded and $A_\Omega$ is strongly $\Omega$-graded.
\end{lemma}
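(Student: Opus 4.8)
The plan is to reduce the whole statement to the element-wise criterion of Lemma~\ref{first-lem}: a graded ring with graded local units is strongly graded exactly when its graded local units lie in $A_\gamma A_{\gamma^{-1}}$ for every $\gamma$. First I would check that Lemma~\ref{first-lem} applies simultaneously to all three gradings. The identity component of the quotient grading is $A_\Omega$, and the identity component of the subgroup grading on $A_\Omega$ is $A_\varepsilon$; since the given graded local units $E$ lie in $A_\varepsilon \subseteq A_\Omega$, the same set $E$ serves as graded local units for $A$ viewed as a $\Gamma/\Omega$-graded ring and for $A_\Omega$ viewed as an $\Omega$-graded ring. Thus the three strong-grading conditions become, respectively: $E \subseteq A_\gamma A_{\gamma^{-1}}$ for all $\gamma \in \Gamma$; $E \subseteq A_{\Omega\gamma}A_{\Omega\gamma^{-1}}$ for all $\gamma \in \Gamma$; and $A_\omega A_{\omega^{-1}} = A_\varepsilon$ for all $\omega \in \Omega$. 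I would also record the standard local-unit identities $A_\delta A_\varepsilon = A_\delta = A_\varepsilon A_\delta$, obtained by writing $a = eae$ for $a \in A_\delta$ and a suitable $e \in E$.

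The forward direction is immediate. If $A$ is strongly $\Gamma$-graded, then in particular $A_\omega A_{\omega^{-1}} = A_\varepsilon$ for all $\omega \in \Omega$, so $A_\Omega$ is strongly $\Omega$-graded; and since $A_\gamma \subseteq A_{\Omega\gamma}$ and $A_{\gamma^{-1}} \subseteq A_{\Omega\gamma^{-1}}$, we get $E \subseteq A_\gamma A_{\gamma^{-1}} \subseteq A_{\Omega\gamma}A_{\Omega\gamma^{-1}}$, so $A$ is strongly $\Gamma/\Omega$-graded.

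The substance is the converse, which I would carry out in two steps. \emph{Step one:} from strong $\Gamma/\Omega$-grading, $A_\varepsilon \subseteq A_\Omega = A_{\Omega\gamma}A_{\Omega\gamma^{-1}}$, so I would expand a degree-$\varepsilon$ element as a finite sum of products $x_\omega y_\sigma$ with $x_\omega \in A_{\omega\gamma}$ and $y_\sigma \in A_{\sigma\gamma^{-1}}$, and then compare $\Gamma$-homogeneous components. The surviving terms are exactly those with $\omega\gamma\sigma\gamma^{-1}=\varepsilon$, i.e.\ $\sigma\gamma^{-1} = \gamma^{-1}\omega^{-1}$, so that $y_\sigma \in A_{(\omega\gamma)^{-1}}$; this yields $A_\varepsilon \subseteq \sum_{\omega \in \Omega} A_{\omega\gamma}A_{(\omega\gamma)^{-1}}$. \emph{Step two:} I would show each summand lies in $A_\gamma A_{\gamma^{-1}}$. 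Writing $\omega' := \gamma^{-1}\omega\gamma \in \Omega$ (using normality), so that $\omega\gamma = \gamma\omega'$ and $(\omega\gamma)^{-1} = (\omega')^{-1}\gamma^{-1}$, I would peel off the $\Omega$-twist using the strong $\Omega$-grading together with the local-unit identities:
\begin{align*}
A_{\omega\gamma} &= A_{\gamma\omega'}A_\varepsilon = A_{\gamma\omega'}A_{(\omega')^{-1}}A_{\omega'} \subseteq A_\gamma A_{\omega'},\\
A_{(\omega\gamma)^{-1}} &= A_\varepsilon A_{(\omega')^{-1}\gamma^{-1}} = A_{(\omega')^{-1}}A_{\omega'}A_{(\omega')^{-1}\gamma^{-1}} \subseteq A_{(\omega')^{-1}}A_{\gamma^{-1}}.
\end{align*}
Multiplying these and collapsing $A_{\omega'}A_{(\omega')^{-1}} = A_\varepsilon$ gives $A_{\omega\gamma}A_{(\omega\gamma)^{-1}} \subseteq A_\gamma A_\varepsilon A_{\gamma^{-1}} = A_\gamma A_{\gamma^{-1}}$. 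Combining the two steps yields $A_\varepsilon \subseteq A_\gamma A_{\gamma^{-1}}$ for every $\gamma$, so $A$ is strongly $\Gamma$-graded by Lemma~\ref{first-lem}.

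The main obstacle is the converse, and within it the second step: the degree bookkeeping that isolates the surviving components in step one must be handled carefully, and the reduction of $A_{\omega\gamma}A_{(\omega\gamma)^{-1}}$ to $A_\gamma A_{\gamma^{-1}}$ genuinely uses all three inputs at once — normality of $\Omega$ to commute $\omega$ past $\gamma$, the strong $\Omega$-grading to absorb the $\omega'$-factors, and the graded local units to guarantee $A_\delta A_\varepsilon = A_\delta$. (Alternatively one could deduce the lemma by applying Dade's Theorem, Theorem~\ref{dadesthm}, to each of the three gradings and composing the resulting category equivalences; but the element-wise argument above is the one that will transfer to the groupoid setting.)
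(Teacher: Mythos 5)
Your proof is correct and follows essentially the same route as the paper's: both directions reduce to Lemma~\ref{first-lem}, and your converse is exactly the paper's argument --- decompose an element of $A_\varepsilon$ via the strong $\Gamma/\Omega$-grading, compare $\Gamma$-degrees so that the factors pair up as $A_{\omega\gamma}A_{(\omega\gamma)^{-1}}$, then use normality of $\Omega$, the graded local units, and the strong $\Omega$-grading to absorb the $\Omega$-twist into $A_\gamma A_{\gamma^{-1}}$. The only differences are cosmetic: you work with subgroup products (inserting $A_\varepsilon = A_{(\omega')^{-1}}A_{\omega'}$ into each factor) where the paper inserts a single local unit $u_i \in A_{\omega_i^{-1}}A_{\omega_i}$ between the paired elements, and your argument runs uniformly over all $\gamma$, avoiding the paper's case split between $\gamma \in \Omega$ and $\gamma \notin \Omega$.
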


\begin{proof} Suppose $A$ is a strongly $\Ga$-graded ring. By Lemma \ref{first-lem},  $A_\gamma A_{\gamma^{-1}}=A_\varepsilon$, for every $\gamma \in \Gamma$. Restricting to $\Omega$, we immediately get that $A_\Omega$ is strongly $\Omega$-graded. Next we show that $A_{\Omega \gamma}A_{\Omega \gamma^{-1}}=A_{\Omega}$, for every $\gamma \in \Gamma$. The fact $A_{\Omega \gamma}A_{\Omega \gamma^{-1}} \subseteq A_{\Omega}$ follows from the definition of the grading. It is enough to show that for $\omega \in \Omega$, $A_\omega \subseteq A_{\Omega \gamma}A_{\Omega \gamma^{-1}}$. But 
$A_\omega=A_\omega A_\varepsilon=A_\omega A_\gamma A_{\gamma^{-1}} \subseteq A_{\Omega \gamma}A_{\Omega \gamma^{-1}}$. Applying Lemma~\ref{first-lem} (for $\Gamma/\Omega$-graded ring $A$) it follows $A$ is strongly $\Gamma/\Omega$-graded. 

For the converse, we only need to show that $A_\varepsilon \subseteq A_\gamma A_{\gamma^{-1}}$, for every $\gamma \in \Gamma$. If $\gamma \in \Omega$ then $A_\varepsilon= A_\gamma A_{\gamma^{-1}}$, because $A_\Omega$ is strongly graded. Suppose $\gamma \not \in \Omega$. 
Since $A$ is strongly $\Gamma/\Omega$-graded, we have $A_\varepsilon \subseteq A_\Omega= A_{\Omega\gamma}  A_{\Omega\gamma^{-1}}$. Thus for $x\in A_\varepsilon$ we have $x=\sum a_i b_i$, where $a_i \in  A_{\omega_{i_1}\gamma}$ and $b_i \in A_{\omega_{i_2}\gamma^{-1}}$. Comparing the degrees of both sides we have that $\deg(a_i)=\gamma \omega_i$ and $\deg(b_i)=\omega_i^{-1}\gamma^{-1}$, for $\omega_i \in \Omega$. Now write $a_i=a_iu_i$, where $u_i\in A_\varepsilon$ is a local unit. Since $A_\Omega$ is strongly $\Omega$-graded, by Lemma~\ref{first-lem}, $u_i \in A_{\omega_i^{-1}} A_{\omega_i}$.  Writing $x=\sum a_i b_i=\sum a_i u_ib_i$ and replacing $u_i$ from above, we get $x\in A_\gamma A_{\gamma^{-1}}$. By Lemma~\ref{first-lem}, this proves that $A$ is strongly $\Gamma$-graded.
\end{proof}

\subsection{Graded groupoids}

Let $\Gamma$ be a group (with identity $\varepsilon$) and let $\G$ be a topological groupoid. 
The groupoid $\G$ is called \textit{$\Gamma$-graded} if $\G$ can be partitioned by clopen subsets indexed by 
$\Ga$, i.e.\! $\G = \bigsqcup_{\gamma \in \Gamma} \G_\gamma$, 
such that $\G_\gamma \G_\delta \subseteq \G_{\gamma \delta}$ for 
every $\gamma, \delta \in \Gamma$. The set $\G_\gamma$ is called the \textit{$\gamma$-component} of $\G$.
We write
$\G_\gamma^x := \dom^{-1}(x) \cap \G_\gamma$ and $^x\G_\gamma:= \cod^{-1}(x) \cap \G_\gamma$. We say a subset $X\subseteq \G$ is 
\textit{$\gamma$-homogeneous} if $X\subseteq \G_\gamma$. 
Obviously, the unit space is $\varepsilon$-homogeneous and if 
$X$ is $\gamma$-homogeneous then $X^{-1}$ is $\gamma^{-1}$-homogeneous. 

Equivalently, $\G$ is $\Gamma$-graded if there is a continuous functor 
$\kappa: \G \to \Gamma$, where $\Gamma$ is regarded as a discrete group. To match the definition of $\Gamma$-grading, from the previous paragraph, one defines $\G_\gamma = \kappa^{-1}(\gamma)$. We say that the graded groupoid $\G$ is \textit{strongly graded} 
if $\G_\gamma \G_\delta = \G_{\gamma \delta}$ for every $\gamma, \delta \in \G$. If the grading is defined by $\kappa: \G \to \Gamma$, then $\G$ is strongly graded if $\kappa^{-1}(\gamma) \kappa^{-1}(\delta) = \kappa^{-1}(\gamma \delta)$ for all $\gamma, \delta \in \Gamma$. Strong grading implies $\kappa$ is surjective.
Strongly graded groupoids  appeared in \cite[Definition~5.3.7]{AD} where they are
viewed as groupoids with a ``strongly surjective" functor $\kappa: \G \to \Gamma$.

If $\G$ is a $\Gamma$-graded topological groupoid and $\Omega \lhd \Gamma$, then it gives rise to two more graded groupoids. Firstly, the open subgroupoid $\G_{\Omega}:=\bigsqcup_{\omega \in \Omega} \G_\omega$ is an 
$\Omega$-graded groupoid (with the \textit{subgroup grading}).
Secondly, if we view $\Gamma$ as a discrete group,  then the quotient topology on $\Gamma/\Omega$ is also discrete, so 
$\G$ has a natural $\Gamma/\Omega$-grading (the \textit{quotient grading}) defined by 
\begin{align} \label{quot grad 2}
\G=\bigsqcup_{\Omega \gamma \in \Gamma/\Omega} \G_{\Omega \gamma},&&
\text{where } \G_{\Omega \gamma}:=\bigsqcup_{\omega\in \Omega} \G_{\omega\gamma} \text{ for all } \Omega \gamma \in \Gamma/\Omega.
\end{align}

\subsection{Graded Steinberg algebras}
Graded ample groupoids are very important, because the Steinberg algebra that one gets from such a groupoid is always a graded algebra. This fact is essential for a number of key results: for instance, in~\cite{CS} it is used to prove that every Leavitt path algebra is a Steinberg algebra, and in \cite{hli} it is used to prove that Steinberg algebras of certain transformation groupoids are partial skew group rings.

If an ample groupoid $\G$ is $\Gamma$-graded then we write $B^{\rm co}_{\gamma}(\G)$ for the set of all $\gamma$-homogeneous compact open bisections of $\G$, and note that the set of all homogeneous compact open bisections,
$
B_{*}^{\rm co}(\G):=\bigcup_{\gamma\in\Gamma} B^{\rm co}_{\gamma}(\G)$, is a basis for the 
topology on $\G$. Recall that $A_R(\G)$ is defined in Definition \ref{st-def}.

\begin{lemma} \cite[Lemma 3.1]{CS}
	If $\G$ is a $\Gamma$-graded ample groupoid, then $A_R(\G) = \bigoplus_{\gamma \in \Gamma} A_R(\G)_\gamma$ is a $\Gamma$-graded algebra with homogeneous components:
	\begin{equation*}
		A_R(\G)_\gamma = \{f \in A_R(\G) \mid \supp(f) \subseteq \G_\gamma\} = \Span_R\{\bm{1}_B \mid B \in B_\gamma^{\rm co}\}.
	\end{equation*} 
\end{lemma}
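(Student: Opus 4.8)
The plan is to verify the three defining properties of a group grading in turn: first that the proposed components $A_R(\G)_\gamma := \{f \in A_R(\G) \mid \supp(f) \subseteq \G_\gamma\}$ coincide with $\Span_R\{\bm{1}_B \mid B \in B_\gamma^{\rm co}(\G)\}$, then that $A_R(\G)$ is their internal direct sum, and finally that convolution respects the grading. The crucial structural inputs are that the $\G_\gamma$ are clopen and partition $\G$, that the homogeneous compact open bisections form a basis, and that $B^{\rm co}(\G)$ is an inverse semigroup under the product $BC$ with $\bm{1}_B * \bm{1}_C = \bm{1}_{BC}$.

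First I would prove a decomposition lemma for a single generator $\bm{1}_C$ with $C \in B^{\rm co}(\G)$. Since $C$ is compact and the clopen sets $\{\G_\gamma\}$ cover it by pairwise disjoint opens, only finitely many of the intersections $C \cap \G_\gamma$ are nonempty. Each such intersection is open in $\G$ (an intersection of opens), compact (it is closed in $C$, because $\G_\gamma$ is closed), and a bisection (a subset of the bisection $C$), hence $C \cap \G_\gamma \in B_\gamma^{\rm co}(\G)$. Therefore $\bm{1}_C = \sum_\gamma \bm{1}_{C \cap \G_\gamma}$ is a finite sum of homogeneous generators, and by $R$-linearity every $f \in A_R(\G)$ can be written $f = \sum_\gamma f_\gamma$ with $f_\gamma \in \Span_R\{\bm{1}_B \mid B \in B_\gamma^{\rm co}(\G)\}$ and $\supp(f_\gamma) \subseteq \G_\gamma$.

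Next I would pin down the two descriptions of $A_R(\G)_\gamma$ and obtain directness at the same time. Because the $\G_\gamma$ are pairwise disjoint, the term $f_\gamma$ above is precisely the restriction of $f$ to $\G_\gamma$ extended by zero, so it is uniquely determined by $f$; this yields both the directness of $\bigoplus_\gamma A_R(\G)_\gamma = A_R(\G)$ and the fact that any $f$ with $\supp(f) \subseteq \G_\gamma$ has all other components zero, hence lies in $\Span_R\{\bm{1}_B \mid B \in B_\gamma^{\rm co}(\G)\}$. The reverse inclusion is immediate from $\supp(\bm{1}_B) \subseteq B \subseteq \G_\gamma$ together with subadditivity of support, so the two descriptions agree.

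Finally, for compatibility with multiplication it suffices by bilinearity to check generators: for $B \in B_\gamma^{\rm co}(\G)$ and $C \in B_\delta^{\rm co}(\G)$ we have $\bm{1}_B * \bm{1}_C = \bm{1}_{BC}$, where $BC \in B^{\rm co}(\G)$ since $B^{\rm co}(\G)$ is an inverse semigroup, and $BC \subseteq \G_\gamma \G_\delta \subseteq \G_{\gamma\delta}$, so $BC \in B_{\gamma\delta}^{\rm co}(\G)$ and $\bm{1}_{BC} \in A_R(\G)_{\gamma\delta}$. I expect the only genuinely delicate point to be the decomposition lemma, specifically the verification that each $C \cap \G_\gamma$ is again a compact open bisection and that only finitely many are nonempty; this is where the topological hypotheses (clopenness of the $\G_\gamma$ and compactness of $C$) do the real work, while the remaining steps are formal consequences of disjointness and the identity $\bm{1}_B * \bm{1}_C = \bm{1}_{BC}$.
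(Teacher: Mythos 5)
Your proof is correct. Note that the paper itself gives no argument for this lemma---it simply cites \cite[Lemma 3.1]{CS}---and your proof is essentially the standard one given there: decompose each generator $\bm{1}_C$ as a finite sum $\sum_\gamma \bm{1}_{C\cap\G_\gamma}$ using compactness of $C$ and clopenness of the components, deduce the direct sum decomposition and the two descriptions of $A_R(\G)_\gamma$ from disjointness of the $\G_\gamma$, and verify $A_R(\G)_\gamma * A_R(\G)_\delta \subseteq A_R(\G)_{\gamma\delta}$ on generators via $\bm{1}_B * \bm{1}_C = \bm{1}_{BC}$ and $BC \subseteq \G_\gamma\G_\delta \subseteq \G_{\gamma\delta}$.
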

	
Naturally, $A_R(\G)_\ep \cong A_R(\G_\ep)$ via the isomorphism $f \mapsto f|_{\G_\ep}$. If $\G$ is $\Gamma$-graded then $A_R(\G)$ has graded local units, which are the characteristic 
functions of compact open subsets of $\G^{(0)}$
(see \cite[Lemma~2.6]{CEP}).
The quotient grading and subgroup grading are preserved  by the construction of Steinberg algebras. Specifically, if $\G$ is $\Gamma$-graded and $\Omega \lhd \Gamma$, the $\Gamma/\Omega$-graded structure 
on $A_R(\G)$ can be obtained in the following two equivalent ways: either by viewing  $\G$ as a $\Gamma/\Omega$-graded groupoid, as in (\ref{quot grad 2}), and transferring that grading to $A_R(\G)$, or by giving $A_R(\G)$ the usual $\Gamma$-graded structure and taking the quotient grading of $A_R(\G)$, as in (\ref{quot grad}). This is due to the fact (whose proof is similar to \cite[Lemma 3.1]{CS}) that \[\bigoplus_{\omega \in \Omega} \Big\{f \in A_R(\G) \mid \supp(f) \subseteq \G_{\omega \gamma} \Big\} = \Big\{f \in A_R(\G)\mid \supp(f) \subseteq \bigsqcup_{\omega \in \Omega} \G_{\omega \gamma}\Big\}
\]
for all $\Omega \gamma \in \Gamma /\Omega$. Similarly, $A_R(\G_\Omega)$ is graded isomorphic to $A_R(\G)_\Omega$, because
\[
A_R(\G_\Omega) \cong \{f \in A_R(\G) \mid \supp(f) \subseteq \G_\Omega\} = \bigoplus_{\omega \in \Omega}\{f \in A_R(\G) \mid \supp(f) \subseteq \G_{\omega}\} = A_R(\G)_\Omega.
\]

\subsection{$\G$-sheaves}
Let $X$ be a topological space. A \textit{sheaf space} over $X$ is a pair $(E,p)$ where $E$ is a topological space and $p: E \to X$ is a local homeomorphism. If $(E,p)$ is a sheaf space, the fibre $E_x:= p^{-1}(x)$ is called the \textit{stalk} of $E$ at $x \in X$. A \textit{local section} of $E$ is a map $s: U \to E$, where $U \subseteq X$ is open, such that $ps$ is the identity on $U$. A \textit{global section} of $E$ is a section $s: X \to E$. The set $\{s(U) \mid s: U \to E \text{ is a continuous local section}\}$ is a basis that generates the topology on $E$.  A \textit{morphism of sheaf spaces} $(E,p)$ and $(F,q)$ over $X$ is a continuous map $\phi: E \to F$ such that $q\phi = p$. In practice, a sheaf space $(E,p)$ is often referred to by $E$ when there is no need to draw attention to $p$. The following lemma is extremely useful.
  \begin{lemma}\cite[\S2, Lemma 3.5 (c)]{tennison} \label{useful}
 	If $(E,p)$ and $(F,q)$ are sheaf spaces over $X$ and $\phi: E \to F$ satisfies $q\phi = p$, then the following are equivalent:
 	\begin{enumerate}[\rm (1)]
 		\item $\phi$ is continuous;
 		\item $\phi$ is an open map;
 		\item $\phi$ is a local homeomorphism.
 	\end{enumerate}
 \end{lemma}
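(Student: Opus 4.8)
The plan is to prove that condition (3) trivially implies both (1) and (2), and then to do the real work in the two converse implications $(1)\Rightarrow(3)$ and $(2)\Rightarrow(3)$. The triviality is just that any local homeomorphism is automatically continuous and open, so a $\phi$ satisfying (3) satisfies (1) and (2) without even using the hypothesis $q\phi = p$. Everything substantive reduces to a single local construction, which I would set up once and reuse for both nontrivial implications.

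Here is the common core. Fix $e \in E$ and put $x = p(e) = q(\phi(e))$. Since $p$ is a local homeomorphism, choose an open $U \ni e$ such that $p|_U : U \to p(U)$ is a homeomorphism onto the open set $p(U)$ (the image is open because local homeomorphisms are open maps). The observation driving the whole proof is that, on such a chart, the identity $q\phi = p$ forces $\phi|_U$ to coincide with $(q|_W)^{-1} \circ p|_U$ for a suitable open $W \supseteq \phi(U)$ on which $q$ is injective. Indeed, if $q$ is injective on $W$ and $\phi(U) \subseteq W$, then for $u \in U$ we have $q(\phi(u)) = p(u)$, whence $\phi(u) = (q|_W)^{-1}(p(u))$; as a composite of homeomorphisms, $\phi|_U$ is then a homeomorphism onto an open subset of $F$, which is exactly what (3) asserts at $e$. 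Thus both nontrivial implications reduce to producing such a $W$.

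For $(1)\Rightarrow(3)$ I would use continuity: pick an open $V \ni \phi(e)$ on which $q$ restricts to a homeomorphism (possible since $q$ is a local homeomorphism), and shrink $U$ to $U \cap \phi^{-1}(V)$, which is still open and on which $p$ is still a homeomorphism onto an open image; now $\phi(U) \subseteq V$, so $W = V$ works. For $(2)\Rightarrow(3)$ I would use openness together with the injectivity of $p|_U$: since $\phi$ is open, $W := \phi(U)$ is open, and $q$ is injective on $W$, because if $\phi(u_1),\phi(u_2) \in W$ have the same $q$-image then $p(u_1) = p(u_2)$, forcing $u_1 = u_2$ by injectivity of $p|_U$, hence $\phi(u_1) = \phi(u_2)$. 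As $q$ is a local homeomorphism (so continuous and open), its injective restriction $q|_W$ is a homeomorphism onto $q(W) = p(U)$, and again $\phi|_U = (q|_W)^{-1} \circ p|_U$ is a homeomorphism onto the open set $W$.

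The steps are elementary once the chart $U$ is fixed; the only point needing care is the fiberwise injectivity argument in $(2)\Rightarrow(3)$, where one transports injectivity of $p$ along the relation $q\phi = p$ to deduce injectivity of $q$ on the open set $\phi(U)$, and then uses that a continuous open injection defined on an open subset of a sheaf space is a homeomorphism onto its open image. I expect this injectivity-transport, rather than any genuine topological subtlety, to be the main thing to state correctly.
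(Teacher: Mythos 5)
Your proof is correct. Note that the paper does not actually prove this lemma --- it is quoted verbatim from Tennison \cite[\S2, Lemma 3.5(c)]{tennison} --- so there is no in-paper argument to compare against; your chart argument, factoring $\phi$ locally as $(q|_W)^{-1}\circ p|_U$ where $U$ is a chart for $p$ and $W$ is an open set on which $q$ is injective, is the standard proof of this fact and is complete: the shrinking of $U$ to $U\cap\phi^{-1}(V)$ in $(1)\Rightarrow(3)$ and the transport of injectivity from $p|_U$ to $q|_{\phi(U)}$ in $(2)\Rightarrow(3)$ (followed by the observation that a continuous open injection is a homeomorphism onto its image) are exactly the two points that need care, and you handle both correctly.
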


A sheaf space $(E,p)$ over $X$ is called a \textit{sheaf of $R$-modules} (where $R$ is a commutative ring with 1) provided each stalk $E_x$ is an $R$-module and the following conditions are satisfied:
\begin{enumerate}[({A}1)]
	\item The zero section $Z: X \to E$ sending $x \in X$ to $0_x$ (the zero of $E_x$) is continuous;
	\item Addition $E \times_{\G^{(0)}} E$ is continuous (where the fibre product is with respect to $p$);
	\item Scalar multiplication $R \times E \to E$ is continuous (where $R$ has the discrete topology).
\end{enumerate}
One can use Lemma \ref{useful} to show that addition and scalar multiplication are also open maps.
A \textit{morphism of sheaves of $R$-modules} is a morphism of sheaf spaces that restricts to $R$-homomorphisms on the stalks.  A section $s: X \to E$ is called \textit{compactly supported} if $\supp(s) = \{x \in X \mid s(x) \ne 0_x\}$ is compact. If $s$ is a continuous section, $\supp(s) = X \setminus Z^{-1}(s(X))$ is closed in $X$. Condition (A1) is equivalent to the statement that $\{0_x \mid x \in X\}$ is open in $E$.

Let $\mathfrak{C}$ be either the category of sets or the category of $R$-modules. Let $X$ be a topological space and $\mathcal{O}_X$ the poset of open subsets of $X$. A \textit{\textit{presheaf} over $X$} is a contravariant functor $F: \mathcal{O}_X \to \mathfrak{C}$. In practice, it is sufficient to define a presheaf $F$ on a subset $\mathcal{O}_X'$ of $\mathcal{O}_X$, provided $\mathcal{O}_X'$ is a basis for the topology on $X$. More precisely, $F$ is a \textit{presheaf of sets} or a \textit{presheaf of $R$-modules}, according to the category $\mathfrak{C}$. A \textit{morphism of presheaves} $F: \mathcal{O}_X \to \mathfrak{C}$ and $G: \mathcal{O}_X \to \mathfrak{C}$ is a natural transformation $\nu: F \to G$. 
Given any presheaf of sets $F$, a standard process converts $F$ into a sheaf space $(\widetilde{F}, p_F)$ in a functorial way (and the same process converts a presheaf of $R$-modules into a sheaf of $R$-modules). For every $U \in \mathcal{O}_X$ and every $x \in U$ there is a surjective homomorphism $F(U) \to \widetilde{F}_x$ usually denoted by $s \mapsto [s]_x$. Restricted to a subcategory of presheaves that satisfy two extra conditions (see \cite[p. 14]{tennison}), the functor $F \mapsto (\widetilde{F}, p_F)$ is an equivalence of categories. We refer to \cite{tennison} for the details.

Now let $\G$ be an \'etale groupoid. A \textit{(right) $\G$-sheaf} consists of a sheaf space $(E,p)$ over $\G^{(0)}$ together with a continuous action $E \times_{\G^{(0)}} \G \to E$ (where the fibre product is with respect to $p$ and $\cod$), denoted $(e,g) \mapsto eg$, satisfying the conditions:
\begin{enumerate}[({B}1)]
	\item $ep(e) = e$ for all $e \in E$;
	\item $p(eg) = \dom(g)$ whenever $e \in E$, $g \in \G$, and $p(e) = \cod(g)$;
	\item $(eg)h = e(gh)$ whenever $e \in E$, $(g, h) \in \G^{(2)}$, and $p(e) = \dom(g)$.
\end{enumerate}
If $E$ and $F$ are $\G$-sheaves, a morphism of sheaf spaces $\phi: E \to F$ is called \textit{$\G$-equivariant} if $\phi(eg) = \phi(e)g$ for all $(e,g) \in E \times_{\G^{(0)}} \G$. A \textit{morphism of $\G$-sheaves} is a $\G$-equivariant morphism of sheaf spaces. The category of $\G$-sheaves, denoted $\mathcal{B}\G$, is called the \textit{classifying topos} of $\G$ \cite{moerdijkI, moerdijkII}.

A $\G$-sheaf $(E,p)$ is called a \textit{$\G$-sheaf of $R$-modules} if it is a sheaf of $R$-modules over $\G^{(0)}$ and for each $g \in \G$ the map $R_g: E_{\cod(g)} \to E_{\dom(g)}$, given by $R_g(e) = eg$, is an $R$-homomorphism. A \textit{morphism of $\G$-sheaves of $R$-modules} is a $\G$-equivariant morphism of sheaves of $R$-modules.
We write $\Modd_{R}\G$ for the category of $\G$-sheaves of $R$-modules.

\subsection{Steinberg's Equivalence Theorem} \label{SET}

The main reason we are interested in $\G$-sheaves of $R$-modules is that they are equivalent to representations of Steinberg algebras. In \cite[Theorem 3.5]{steinberg2}, Steinberg proved that (for ample $\G$) the category of $\G$-sheaves of $R$-modules is equivalent to the category of right unital $A_R(\G)$-modules. This is a vast generalisation of the fact that the category of representations of a group $G$ in $K$-vector spaces is equivalent to the category of $KG$-modules. Steinberg's Equivalence Theorem (as we call it) was used in \cite{steinberg2016simplicity} to study primitive Steinberg algebras, and it leads to a very short proof of the Primitivity Theorem for Leavitt path algebras (see \cite[Theorem 5.5]{steinberg2018prime} and \cite[Theorem 5.7]{ABR}). We briefly describe the theorem and the functors involved in it.

Let $M$ be a right $A_R(\G)$-module. Define the following presheaf of $R$-modules over $\G^{(0)}$:
\begin{align}\label{MU}
	&{M}(U) = M \bm{1}_U, & && &\text{for all } U \in B^{\rm co}(\G^{(0)});\\ \notag
	&\rho^U_V: M(U) \to M(V), & &\rho^U_V(m) = m \bm{1}_V, &&\text{for all } V \subseteq U \text{ in } B^{\rm co}(\G^{(0)}),\ m \in M(U).
\end{align}
Applying the sheaf space functor, one gets a sheaf of $R$-modules $\operatorname{Sh}(M) = (\widetilde{M},p_M)$ where:
\begin{align*}
	&\widetilde{M} = \bigcup_{x \in \G^{(0)}} \widetilde{M}_x, & &\widetilde{M}_x = \lim_{\substack{\longrightarrow\\x \in U}} M(U) = \{[m]_x \mid m \in M\}; \\
	&p_M([m]_x) = x & &\text{for all } [m]_x \in \widetilde{M}_x.
\end{align*}
In other words, $\widetilde{M}_x$ is the direct limit of the directed system in (\ref{MU}), as $U$ ranges over all the sets $U \in B^{\rm co}(\G^{(0)})$ that contain $x$. The notation $[m]_x$ is standard: it means the image of $m \in M$ in the direct limit $\widetilde{M}_x$. The topology on $\widetilde{M}$ is generated by the basis of open sets:
\begin{align}\label{Um}
&(U,m) = \big\{[m]_x \mid x \in U\big \},  & & \text{where } U \in B^{\rm co} (\G^{(0)}),\ m \in M(U).
\end{align}
For $[m]_x \in \widetilde{M}$ and $g \in {^x\G^y}$, we define $[m]_{x} g = [m \bm{1}_B]_{y}$ where $B \in B^{\rm co}(\G)$ is an arbitrarily chosen compact open bisection containing $g$. This makes $\widetilde{M}$ a $\G$-sheaf of $R$-modules.
After defining the effect of $\operatorname{Sh}$ on morphisms and checking some details, this process defines a functor $\operatorname{Sh}: \Modd A_R(\G) \to \Modd_R\G$.

In the other direction, let $(E,p)$ be a $\G$-sheaf of $R$-modules. Define $\Gamma_c(E)$ as the set of compactly supported continuous sections $s: \G^{(0)} \to E$. Then $\Gamma_c(E)$ is an $R$-module and, moreover, it is a right $A_R(\G)$-module with:
\begin{align}\label{ARG-mod}
\big(sf\big)(x) = \sum_{g \in \G^x} f\big(g\big)s\big(\cod(g)\big)g && \text{for all } s \in \Gamma_c(E),\ f \in A_R(\G),\ x \in \G^{(0)}.
\end{align}
In particular, (\ref{ARG-mod}) entails:
\begin{align*}
\big(s  \bm{1}_B\big)(x)=\begin{cases}  s\big(\cod(g)\big)g & \text{if}\ g\in B,\ \dom(g)=x\\ 0_x  & \text{otherwise}.\end{cases} && \text{for all } s \in \Gamma_c(E),\ B \in B^{\rm co}(\G),\ x \in \G^{(0)}.
\end{align*}
If $U \in B^{\rm co}(\G^{(0)})$ then $\big(s  \bm{1}_U\big)(x) = \bm{1}_U(x)s(x)$ for all $x \in \G^{(0)}$.
After defining the effect of $\Gamma_c$ on morphisms and checking some details, this defines a functor $\Gamma_c: \Modd_R(\G) \to \Modd A_R(\G)$. Here is the statement of Steinberg's Equivalence Theorem:

\begin{theorem} \cite[Theorem 3.5]{steinberg2}
	Let $\G$ be an ample groupoid and $R$ a unital commutative ring. The functors $\Gamma_c: \Modd_R\G \to \Modd A_R(\G)$
	and $\operatorname{Sh}: \Modd A_R(\G) \to \Modd_R\G$
	are mutually inverse equivalences of categories.
\end{theorem}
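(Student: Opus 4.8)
The plan is to construct natural isomorphisms $\Gamma_c \circ \operatorname{Sh} \cong \id_{\Modd A_R(\G)}$ and $\operatorname{Sh} \circ \Gamma_c \cong \id_{\Modd_R\G}$; since $\Gamma_c$ and $\operatorname{Sh}$ are already functors, this establishes that they are mutually inverse equivalences. The two tools doing the real work are the ample hypothesis---so that $\G^{(0)}$ is locally compact, Hausdorff and totally disconnected, and every finite cover by compact open sets refines to a finite partition into compact open pieces---together with Lemma \ref{useful}, which reduces recognising an isomorphism of sheaf spaces to checking bijectivity on stalks.

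First I would treat $\Gamma_c \circ \operatorname{Sh}$. Given a unital right $A_R(\G)$-module $M$, write $\widetilde{M} = \operatorname{Sh}(M)$ and define, for $m \in M$, the section $s_m \colon \G^{(0)} \to \widetilde{M}$ by $s_m(x) = [m]_x$ for every $x \in \G^{(0)}$. Because $M$ is unital and $A_R(\G)$ has local units, there is a compact open $U \subseteq \G^{(0)}$ with $m\bm{1}_U = m$; then $[m]_x = 0_x$ for $x \notin U$ (as $U$ is clopen), so $\supp(s_m) \subseteq U$ is compact and $s_m$ is continuous by (\ref{Um}), giving $s_m \in \Gamma_c(\widetilde{M})$. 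I would check that $\Phi_M \colon m \mapsto s_m$ is $R$-linear, $A_R(\G)$-linear---this reduces, via (\ref{ARG-mod}), to verifying $\Phi_M(m\bm{1}_B) = \Phi_M(m)\bm{1}_B$ for $B \in B^{\rm co}(\G)$---and natural in $M$. For injectivity, if $s_m = 0$ then $[m]_x = 0$ for each $x \in U$, so $m\bm{1}_{V_x} = 0$ for some compact open $V_x \ni x$; passing to a finite subcover of $U$ and refining to a partition gives $m = m\bm{1}_U = 0$. For surjectivity, a section $s \in \Gamma_c(\widetilde{M})$ has its (compact) support covered by finitely many basic opens $(U_i, m_i)$; refining to a disjoint compact open partition $\{U_i'\}$ yields $m = \sum_i m_i\bm{1}_{U_i'}$ with $\Phi_M(m) = s$.

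Next I would treat $\operatorname{Sh} \circ \Gamma_c$. For a $\G$-sheaf of $R$-modules $(E,p)$, define $\Psi_E \colon \widetilde{\Gamma_c(E)} \to E$ on stalks by $[s]_x \mapsto s(x)$. This is well-defined and $R$-linear on each stalk, satisfies $p \circ \Psi_E = p_{\Gamma_c(E)}$, and is $\G$-equivariant: for $g \in {}^x\G^y$ and $B \ni g$ in $B^{\rm co}(\G)$, the formula for $s\bm{1}_B$ gives $\Psi_E([s]_x\, g) = \Psi_E([s\bm{1}_B]_y) = (s\bm{1}_B)(y) = s(\cod(g))\,g = \Psi_E([s]_x)\,g$. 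By Lemma \ref{useful} it is automatically continuous and a local homeomorphism once it is bijective on stalks. Surjectivity: given $e \in E_x$, since the continuous local sections form a basis for the topology of $E$ there is one passing through $e$; restricting its domain to a compact open neighbourhood in $\G^{(0)}$ and extending by the zero section produces $s \in \Gamma_c(E)$ with $s(x) = e$. Injectivity: if $s(x) = 0_x$ then, since the zero section is open by (A1), $s$ vanishes on a compact open $V \ni x$, so $s\bm{1}_V = 0$ and $[s]_x = 0$. Naturality of $\Psi$ in $E$ is routine.

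The main obstacle is not any single computation but the recurring reliance on the ample structure to convert local data into global module elements and compactly supported sections: each surjectivity argument passes through refining a finite cover by basic open sets into a disjoint partition by compact open sets, which is precisely where total disconnectedness and local compactness of $\G^{(0)}$ are indispensable. Checking that $\Phi$ and $\Psi$ are genuinely natural---that they commute with $\operatorname{Sh}(f)$, $\Gamma_c(f)$ and the functors' action on morphisms---is conceptually straightforward but requires unwinding how $\Gamma_c$ and $\operatorname{Sh}$ are defined on morphisms, and I would carry this out last, once the object-level isomorphisms are established.
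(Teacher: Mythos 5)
Two remarks before the main point. First, the paper itself does not prove this statement: it is quoted from Steinberg \cite{steinberg2}, so the benchmark is Steinberg's original argument, and your proposal reconstructs essentially that proof --- exhibit the natural maps $\Phi_M\colon m \mapsto (x \mapsto [m]_x)$ and $\Psi_E\colon [s]_x \mapsto s(x)$ and show both are isomorphisms. Second, your treatment of $\Gamma_c \circ \operatorname{Sh}$ is sound: local units give compact support of $s_m$, and disjointification of finite covers by compact open sets (valid because $\G^{(0)}$ is Hausdorff and totally disconnected, so compact open sets are clopen and closed under differences) gives both injectivity and surjectivity. The only point to make explicit in the surjectivity step is that, after shrinking, $s$ agrees with the canonical section of $m_i$ on all of $U_i'$, not merely at one point of it; this follows because two continuous sections of a sheaf space that agree at a point agree on an open neighbourhood.

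There is, however, one genuine error: the claim that $\Psi_E$ ``is automatically continuous and a local homeomorphism once it is bijective on stalks,'' citing Lemma \ref{useful}. That lemma asserts, for a fibre-preserving map between sheaf spaces, only the \emph{equivalence} of continuity, openness, and being a local homeomorphism; it does not assert that any of these holds, and none of them follows from stalk bijectivity. Concretely, if $E = \G^{(0)} \times R^2$ is the sheaf space of a constant sheaf and $S \subseteq \G^{(0)}$ is a non-open subset, the map that swaps the two coordinates on stalks over $S$ and is the identity elsewhere is fibre-preserving and an $R$-module isomorphism on every stalk, yet discontinuous. So you must verify one of the three conditions of Lemma \ref{useful} by hand. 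Fortunately this is immediate from definitions you already invoke: by (\ref{Um}), a basic open set of $\operatorname{Sh}(\Gamma_c(E))$ has the form $(V,s)$ with $V \in B^{\rm co}(\G^{(0)})$ and $s \in \Gamma_c(E)\bm{1}_V$, and
\[
\Psi_E\big((V,s)\big) = \{s(x) \mid x \in V\} = s(V),
\]
which is open in $E$ because images of continuous local sections over open sets are (basic) open sets of a sheaf space. Hence $\Psi_E$ is an open map, and \emph{now} Lemma \ref{useful} yields that it is continuous and a local homeomorphism; being in addition bijective on stalks, it is an isomorphism of $\G$-sheaves of $R$-modules. With this repair (and the deferred naturality checks, which are indeed routine), your proof is complete.
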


\begin{example} \label{first-ex}
	Consider the right regular representation $M := A_R(\G)$.
	The sheaf of $R$-modules $\operatorname{Sh}(M) = (\widetilde{M}, p_M)$ has stalks $\widetilde{M}_x  = \lim_{\substack{\longrightarrow \\ x \in U}} A_R(\G)*\bm{1}_U\cong R{\G^x}$, where $R{\G^x}$ is the free $R$-module with basis $\G^x$. The isomorphism $\big(\lim_{\substack{\longrightarrow \\ x \in U}} A_R(\G\big)*\bm{1}_U)\to R{\G^x}$ carries $[f]_x$ to $\sum_{g \in \G^x}f(g)g$. The induced action of $\G$ on the sheaf $\bigcup_{x \in \G^{(0)}} R\G^x$ is the canonical one where $(g,h) \mapsto gh$ for all $(g, h) \in \G^{(2)}$. Since $R\G^x$ is a limit of left $A_R(\G)$-modules, and the connecting homomorphisms are $A_R(\G)$-homomorphisms, $R\G^x$ is also a left $A_R(\G)$-module with \begin{align} \label{RGx-action}
	f \cdot t = \sum_{z \in \G^x} f(zt^{-1})z && \text{for all } f \in A_R(\G),\ t \in \G^x.
	\end{align}
	In particular, if $U \in B^{\rm co}(\G^{(0)})$ then $\bm{1}_U \cdot t = \bm{1}_U(\cod(t))t$. More details on $R\G^x$ as a bimodule can be found in \cite[Proposition 7.8]{steinberg1}.
\end{example}

\section{Strongly graded groupoids}\label{hgfhgfgf3}

First, we prove groupoid versions of Lemmas \ref{first-lem} and \ref{corr}. It is the first of several instances where graded groupoids and graded rings display surprisingly similar results.

\begin{lemma}\label{stg}
	Let $\G$ be a $\Gamma$-graded topological groupoid. The following are equivalent.
	\begin{enumerate}[\upshape(1)]
		\item\label{it1:stg} $\G$ is strongly graded;
		
		\smallskip

		\item\label{it2:stg} $\G_\gamma \G_{\gamma^{-1}}=\G_\varepsilon$, for all $\gamma \in \Gamma$;
		
		\smallskip

		\item\label{it3:stg} $\dom(\G_\gamma)=\G^{(0)}$, for all $\gamma \in \Gamma$;
		
		\smallskip

		\item\label{it4:stg} $\cod(\G_\gamma)=\G^{(0)}$, for all $\gamma \in \Gamma$.
	\end{enumerate}
\end{lemma}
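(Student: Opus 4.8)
The plan is to prove the cycle of implications $(\ref{it1:stg}) \Rightarrow (\ref{it2:stg}) \Rightarrow (\ref{it3:stg}) \Rightarrow (\ref{it1:stg})$ and separately $(\ref{it2:stg}) \Leftrightarrow (\ref{it4:stg})$, exploiting the symmetry between $\dom$ and $\cod$ under inversion. The implication $(\ref{it1:stg}) \Rightarrow (\ref{it2:stg})$ is immediate: strong grading gives $\G_\gamma \G_{\gamma^{-1}} = \G_{\gamma \gamma^{-1}} = \G_\varepsilon$ by taking $\delta = \gamma^{-1}$ in the defining equality.

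For $(\ref{it2:stg}) \Rightarrow (\ref{it3:stg})$, I would start from an arbitrary unit $x \in \G^{(0)} \subseteq \G_\varepsilon = \G_\gamma \G_{\gamma^{-1}}$. Writing $x$ as a product $g h$ with $g \in \G_\gamma$, $h \in \G_{\gamma^{-1}}$, and $(g,h) \in \G^{(2)}$, the composability forces $\dom(g) = \cod(h)$ and $gh = x$ forces $\cod(g) = \cod(x) = x$ and $\dom(h) = \dom(x) = x$. Since $x$ is a unit and $gh = x$, one has $h = g^{-1}$, so $\dom(g) = \cod(g^{-1}) = \cod(h) = \dom(h) = x$. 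This exhibits an element $g \in \G_\gamma$ with $\dom(g) = x$, proving $x \in \dom(\G_\gamma)$; as $x$ was arbitrary, $\dom(\G_\gamma) = \G^{(0)}$.

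The implication $(\ref{it3:stg}) \Rightarrow (\ref{it1:stg})$ is the step I expect to be the crux. I already have the containment $\G_\gamma \G_\delta \subseteq \G_{\gamma\delta}$ from the grading, so it suffices to prove the reverse inclusion: given $k \in \G_{\gamma\delta}$, I must factor $k = gh$ with $g \in \G_\gamma$ and $h \in \G_\delta$ composable. The idea is to use hypothesis $(\ref{it3:stg})$ applied to the exponent $\gamma$ at the unit $\cod(k)$: there exists $g \in \G_\gamma$ with $\dom(g) = \cod(k)$. Wait — I want $g$ to attach on the left, so I instead apply $(\ref{it3:stg})$ (or rather its mirror) to produce $g \in \G_\gamma$ with $\cod(g) = \cod(k)$; to get this I note that $(\ref{it3:stg})$ for $\gamma^{-1}$ gives some $g' \in \G_{\gamma^{-1}}$ with $\dom(g') = \cod(k)$, and then $g := g'^{-1} \in \G_\gamma$ satisfies $\cod(g) = \cod(k)$. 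Now set $h := g^{-1} k$, which is defined because $\dom(g) = \cod(g'^{-1})\!$-bookkeeping shows $(g^{-1}, k)$ is composable, and $h$ is $\gamma^{-1}(\gamma\delta) = \delta$-homogeneous since every element is homogeneous and degrees multiply. Then $gh = g g^{-1} k = \cod(g) k = \cod(k) k = k$, giving the factorisation. The delicate part is the composability bookkeeping with $\dom$ and $\cod$, so I would verify each source/target condition explicitly.

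Finally, the equivalence $(\ref{it2:stg}) \Leftrightarrow (\ref{it4:stg})$ follows by the inversion symmetry: applying inversion, $\dom(\G_\gamma) = \cod(\G_\gamma^{-1}) = \cod(\G_{\gamma^{-1}})$, so $(\ref{it3:stg})$ holding for all $\gamma$ is equivalent to $(\ref{it4:stg})$ holding for all $\gamma$ (replacing $\gamma$ by $\gamma^{-1}$ ranges over all of $\Gamma$). Since $(\ref{it3:stg})$ is already tied into the main cycle, this closes the chain and establishes that all four conditions are equivalent.
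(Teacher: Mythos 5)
Your overall architecture --- the cycle (1) $\Rightarrow$ (2) $\Rightarrow$ (3) $\Rightarrow$ (1) together with the inversion symmetry linking (3) and (4) --- is sound and close to the paper's, and your steps (1) $\Rightarrow$ (2), (3) $\Rightarrow$ (1), and the inversion argument are all correct. However, your step (2) $\Rightarrow$ (3) contains a genuine error. From $x = gh$ with $g \in \G_\gamma$, $h \in \G_{\gamma^{-1}}$ you correctly deduce $\cod(g) = x$, $\dom(h) = x$, and $h = g^{-1}$, but the chain $\dom(g) = \cod(g^{-1}) = \cod(h) = \dom(h) = x$ silently uses the equality $\cod(h) = \dom(h)$, which has no justification: $h$ is not a unit, and nothing forces it to be an isotropy element. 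Concretely, let $\G = \ZZ \times (\{1,2\} \times \{1,2\})$ be the direct product of the group $\ZZ$ with the pair groupoid on two points, graded by the first coordinate. Condition (2) holds here, and the unit $x$ at the point $1$ factors as $x = gh$ with $g = (1,(1,2))$ and $h = (-1,(2,1)) = g^{-1}$; then $\cod(g) = x$ but $\dom(g)$ is the unit at the point $2$, not $x$. So the element $g$ produced by your decomposition does not witness $x \in \dom(\G_\gamma)$, and since (2) $\Rightarrow$ (3) is the only arrow leaving (2) in your scheme, the equivalence is incomplete as written.

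The repair is a one-liner, and it is exactly what the paper does: the factor carrying the information about domains is $h$, not $g$. Since $\dom(x) = \dom(gh) = \dom(h)$, the decomposition shows $x \in \dom(\G_{\gamma^{-1}})$; as $\gamma$ ranges over all of $\Gamma$, so does $\gamma^{-1}$, whence $\dom(\G_\gamma) = \G^{(0)}$ for every $\gamma$. (Equivalently, apply hypothesis (2) to $\gamma^{-1}$ from the start.) With that fix your proof goes through. For comparison with the paper: your (3) $\Rightarrow$ (1) factorisation $k = g(g^{-1}k)$ peels off the $\gamma$-part on the left, whereas the paper's (4) $\Rightarrow$ (1) step writes $x = (xy)y^{-1}$ and peels off the $\delta$-part on the right; these are mirror images of the same idea, and the paper's (2) $\Rightarrow$ (3) is precisely the set-level version of the corrected argument above ($\G^{(0)} = \dom(\G_\gamma\G_{\gamma^{-1}}) \subseteq \dom(\G_{\gamma^{-1}})$). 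One cosmetic point: your last paragraph announces the equivalence (2) $\Leftrightarrow$ (4) but actually proves (3) $\Leftrightarrow$ (4); that is what your chain of implications needs anyway, so only the label is off.
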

\begin{proof}
	
	\noindent\eqref{it1:stg} $\Rightarrow$ \eqref{it2:stg} It follows from the definition of strong grading.
	
	\noindent \eqref{it2:stg} $\Rightarrow$ \eqref{it3:stg} For any $\gamma \in \Gamma$, 
	we have $ \G^{(0)} = \dom(\G_\varepsilon)=\dom(\G_\gamma \G_{\gamma^{-1}} )\subseteq \dom(\G_{\gamma^{-1}}) \subseteq \G^{(0)}$.
	
	\noindent\eqref{it3:stg} $\Rightarrow$ \eqref{it4:stg} 
	For any $\gamma \in \Gamma$, $\G^{(0)} = \dom(\G_{\gamma^{-1}}) = \cod \bm{i}(\G_{\gamma^{-1}}) = \cod(\G_\g)$.
	
	\noindent\eqref{it4:stg} $\Rightarrow$ \eqref{it1:stg} For any $x\in \G_{\gamma \delta}$ 
	choose $y\in  \G_{\delta^{-1}}$ with $\cod(y)=\dom(x)$. Then  $x=xyy^{-1} \in \G_\gamma \G_\delta$. 
\end{proof}

\begin{lemma}\label{first-lemgrgr}
	Let $\G$ be a $\Gamma$-graded topological groupoid and $\Omega\lhd \Gamma$. Then $\G$ is strongly $\Gamma$-graded if and only if $\G$ is strongly 
	$\Gamma/\Omega$-graded and $\G_{\Omega}$ is strongly $\Omega$-graded. 
\end{lemma}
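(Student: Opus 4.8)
The plan is to reduce everything to the domain-map characterisation of strong grading furnished by Lemma~\ref{stg}: a $\Gamma$-graded groupoid is strongly graded precisely when $\dom(\G_\gamma)=\G^{(0)}$ for every homogeneous component. This criterion is far easier to push through the subgroup and quotient gradings than the multiplicative condition $\G_\gamma\G_{\gamma^{-1}}=\G_\varepsilon$, because it only asks that each component surject onto the unit space. The single structural fact I would flag at the outset is that $\G_\Omega$ and $\G$ share the same unit space, since $\G^{(0)}\subseteq\G_\varepsilon\subseteq\G_\Omega$ (as $\varepsilon\in\Omega$); thus all three applications of Lemma~\ref{stg} refer to the very same $\G^{(0)}$.

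For the forward direction I would assume $\G$ is strongly $\Gamma$-graded and read off, via Lemma~\ref{stg}, that $\dom(\G_\gamma)=\G^{(0)}$ for all $\gamma\in\Gamma$. Restricting attention to indices $\omega\in\Omega$ gives $\dom(\G_\omega)=\G^{(0)}$, so $\G_\Omega$ is strongly $\Omega$-graded by another appeal to Lemma~\ref{stg}. For the quotient grading one notes $\dom(\G_{\Omega\gamma})=\bigcup_{\omega\in\Omega}\dom(\G_{\omega\gamma})\supseteq\dom(\G_\gamma)=\G^{(0)}$, taking $\omega=\varepsilon$, so each coset component also surjects onto $\G^{(0)}$ and $\G$ is strongly $\Gamma/\Omega$-graded. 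This half is essentially automatic.

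The converse is the substantive part. Assuming $\G$ is strongly $\Gamma/\Omega$-graded and $\G_\Omega$ is strongly $\Omega$-graded, I would fix $\gamma\in\Gamma$ and an arbitrary unit $x\in\G^{(0)}$, aiming to produce an element of $\G_\gamma$ with domain $x$. Strong $\Gamma/\Omega$-grading supplies some $g\in\G_{\omega\gamma}$ with $\dom(g)=x$ for a suitable $\omega\in\Omega$; the degree of $g$ is off by the factor $\omega$. The crux is the grade-correction step: using strong $\Omega$-grading of $\G_\Omega$ applied to $\omega^{-1}\in\Omega$, I obtain $h\in\G_{\omega^{-1}}$ with $\dom(h)=\cod(g)$. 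Then $(h,g)$ is composable, $hg\in\G_{\omega^{-1}}\G_{\omega\gamma}\subseteq\G_\gamma$, and $\dom(hg)=\dom(g)=x$, so $x\in\dom(\G_\gamma)$. As $x$ and $\gamma$ are arbitrary, $\dom(\G_\gamma)=\G^{(0)}$ for all $\gamma$, and Lemma~\ref{stg} concludes that $\G$ is strongly $\Gamma$-graded.

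I expect the only real obstacle to be the bookkeeping in this correction step: one must place $h$ in exactly the component $\G_{\omega^{-1}}$ and verify that $(h,g)$ is a legal composable pair with $\dom(hg)=x$, using the convention $\cod(g)=\dom(h)$ for composability together with $\G_\alpha\G_\beta\subseteq\G_{\alpha\beta}$. This is the groupoid shadow of the ``local unit'' manipulation in the proof of Lemma~\ref{corr}, where strong $\Omega$-grading of $A_\Omega$ is used to split a local unit $u_i$ as an element of $A_{\omega_i^{-1}}A_{\omega_i}$; the groupoid version is cleaner because the decomposition is witnessed by the single element $h$ rather than a finite sum, and no additive terms need to be tracked.
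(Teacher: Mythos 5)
Your proposal is correct and follows essentially the same route as the paper: the substantive converse direction is word-for-word the paper's argument (pick $g\in\G_{\omega\gamma}$ with $\dom(g)=x$ via the quotient grading, then correct the degree by composing with $h\in\G_{\omega^{-1}}$ supplied by strong $\Omega$-grading), and the forward direction differs only in invoking condition (3) of Lemma~\ref{stg} where the paper invokes the equivalent condition (2). The observation that $\G_\Omega$ shares the unit space $\G^{(0)}$ is a nice explicit touch the paper leaves implicit.
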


\begin{proof} ($\Rightarrow$)
	If $\G$ is strongly $\Gamma$-graded then Lemma~\ref{stg} (2)
	implies  $\G$ is strongly $\Gamma/\Omega$-graded and $\G_{\Omega}$ is strongly $\Omega$-graded. 
	
	($\Leftarrow$)  By Lemma~\ref{stg} (3), $\dom(\G_\omega)=\G^{(0)}$ for any $\omega \in \Omega$, since $\G_\Omega$ is strongly $\Omega$-graded. Suppose $\gamma \in \Gamma \setminus \Omega$. 
	Since $\G$ is strongly $\Gamma/\Omega$-graded, again by Lemma~\ref{stg} (3) we have $\dom(\G_{\Omega \gamma})=\G^{(0)}$. Then for any $u \in \G^{(0)}$, there exists $\omega \in \Omega$ and $g\in \G_{\omega\gamma}$ such that $\dom(g)=u$. Now $\cod(g)\in \G^{(0)}=\dom(\G_{\omega^{-1}})$ so there exists $h\in \G_{\omega^{-1}}$ such that $\dom(h)=\cod(g)$. Then $hg\in \G_{\omega^{-1}}\G_{\omega\gamma}\subseteq \G_\gamma$ with $\dom(hg)=u$. Thus $u \in \dom(\G_\gamma)$ and so $\dom(\G_\gamma)=\G^{(0)}$. By Lemma~\ref{stg}, it follows that $\G$ is strongly $\Gamma$-graded. 
\end{proof}

\subsection{The category of graded $\G$-sheaves} \label{sheaves}

To motivate Definition \ref{G-sheaf of sets}, let $\Gamma$ be a group and let $X$ be a topological space. Let $\operatorname{Set}_\Gamma$ be the category whose objects are $\Gamma$-graded sets (that is, sets $Y$ equipped with a function $\kappa_Y: Y \to \Gamma$) and whose morphisms are functions $f: Y_1 \to Y_2$ such that $\kappa_{Y_2} f = \kappa_{Y_1}$. A $\operatorname{Set}_\Gamma$-valued presheaf over $X$ is a contravariant functor $F: \mathcal{O}_X \to \operatorname{Set}_\Gamma$. Applying the sheaf space functor to $F$, and keeping track of the $\Gamma$-grading, one obtains a sheaf space $(\widetilde{F},p_F)$ whose stalks are $\Gamma$-graded sets. If, in addition, $\G$ is a $\Gamma$-graded \'etale groupoid and $\widetilde{F}$ is a $\G$-sheaf such that the grading on $\G$ is compatible with the grading on $\widetilde{F}$, then we call $\widetilde{F}$ a graded $\G$-sheaf of sets.

\begin{definition} \label{G-sheaf of sets}
	Let $\G$ be a $\Gamma$-graded \'etale groupoid, graded by the continuous functor $\kappa:\G~\rightarrow~\Gamma$.
	A $\G$-sheaf $E$ is called a \emph{graded $\G$-sheaf of sets} if there is a continuous map $\kappa:E\to \Gamma$ (named again $\kappa$) such that $\kappa(eg)=\kappa(e)\kappa(g)$, whenever $p(e)=\cod(g)$.
\end{definition}

We write $E=\bigsqcup_{\gamma\in \Gamma} E_\gamma$, where $E_\gamma :=\kappa^{-1}(\gamma)$, and we call $E_\gamma$ the $\gamma$-\emph{component}. A morphism $\phi:E\to F$ of $\G$-sheaves is a \emph{graded morphism} if $\phi(E_\g)\subseteq F_\g$ for any $\g \in \Gamma$. The category of all graded $\G$-sheaves of sets with graded morphisms is denoted by $\mathcal B^{\gr}\G$. There is a forgetful functor $U: \mathcal B^{\gr}\G \rightarrow \mathcal B\G$.

To motivate Definition \ref{G-sheaf of R-mods}, let $\Gamma$ be a group, $X$ a topological space, and $R$ a unital commutative ring. Consider $R$ as a $\Gamma$-graded ring with the trivial grading (i.e., $R = R_\ep$). Let $F$ be a presheaf of graded $R$-modules; i.e., a contravariant functor from $\mathcal{O}_X$ to the category of graded $R$-modules. Applying the sheaf space functor to $F$ one obtains a sheaf of $R$-modules $(\widetilde{F},p_F)$ in which each stalk is a $\Gamma$-graded $R$-module. Suppose, in addition, $\G$ is a $\Gamma$-graded \'etale groupoid and $\widetilde{F}$ is a $\G$-sheaf of $R$-modules such for each $g \in \G_\gamma$ the homomorphism $R_g: E_{\cod(g)} \to E_{\dom(g)}$ implemented by $g$ maps $\left(E_{\cod(g)}\right)_\alpha$ to $\left(E_{\dom(g)}\right)_{\alpha\gamma}$, for all $\alpha \in \Gamma$. We call such an object a graded $\G$-sheaf of $R$-modules.
\begin{definition}\label{G-sheaf of R-mods}
	 Let $\G$ be a $\Gamma$-graded \'etale groupoid and let $E$ be a $\G$-sheaf of $R$-modules. Then $E$ is called a \emph{graded $\G$-sheaf of $R$-modules} if:
\begin{enumerate}[\rm({C}1)]
	\item For any $x\in \G^{(0)}$, $E_x=\bigoplus_{\gamma\in \Gamma} 
	(E_x)_\gamma$, where $(E_x)_\gamma$ are $R$-submodules of $E_x$;	
	\item  $E_\gamma := \bigcup_{x \in \G^{(0)}} (E_x)_\gamma$ is open in $E$ for every $\gamma \in \Gamma$;
	\item $E_\gamma \G_\delta\subseteq E_{\gamma\delta}$ for every $\gamma, \delta \in \Gamma$.
\end{enumerate}
\end{definition}
We call $E_\gamma$ the $\gamma$-\textit{homogeneous component} of $E$, and denote the \textit{homogeneous elements} of $E$ by $E^h:=\bigcup_{\gamma \in \Gamma}E_\gamma$. Note that the degree map $\kappa: E^h \to \Gamma$, $s_\ga \mapsto \gamma$, where $s_\ga\in E_\gamma$, is continuous, and (C3) can be interpreted as $\kappa(eg) = \kappa(e)\kappa(g)$ for every $e \in E^h$ and any $g \in \G$ such that $p(e) = \cod(g)$. A morphism $\phi:E\to F$ of $\G$-sheaves of $R$-modules is a \emph{graded morphism} if $\phi(E_\g)\subseteq F_\g$ for any $\g \in \Gamma$. The category of graded $\G$-sheaves of $R$-modules with graded morphisms will be denoted $\Gr_R\G$.

For a graded $\G$-sheaf of $R$-modules $E$, and $\alpha \in \Gamma$, we define the \textit{$\alpha$-shifted graded $\G$-sheaf of $R$-modules}:
\begin{align}\label{shift-sheaf}
E(\alpha) = \bigcup_{x \in \G^{(0)}} E(\alpha)_x = \bigcup_{x \in \G^{(0)}}\bigoplus_{\gamma \in \Gamma}(E(\alpha)_x)_\gamma, &&\text{where } (E(\alpha)_x)_\gamma = (E_x)_{\alpha \gamma}.
\end{align}
As an ungraded sheaf, $E(\alpha)$ is identical to $E$ but the grading is shifted by $\alpha$ (compare with (\ref{eq:M-shifted})). The $\gamma$-homogeneous component of $E(\alpha)$ is just the $\alpha\gamma$-homogeneous component of $E$. For $\alpha \in \Gamma$, the shift functor $\mathcal{T}_\alpha: \Gr_R\G \to \Gr_R\G$, $E \mapsto E(\alpha)$, is an auto-equivalence.

\begin{lemma} \label{projection-lem}
	If $\G$ is a $\Gamma$-graded \'etale groupoid and $E$ is a graded $\G$-sheaf of $R$-modules, then the natural projection onto the $\gamma$-homogeneous component of $E$ (i.e., $\pi_\gamma: E \to E_\gamma$, $e \mapsto e_\gamma$) is continuous.
\end{lemma}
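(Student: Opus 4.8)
The plan is to verify continuity locally, exploiting three ingredients already available: the topology on $E$ has a basis consisting of images $s(U)$ of continuous local sections; the homogeneous components $E_\gamma$ are open by (C2); and both the zero section $Z$ and the addition map are continuous by (A1) and (A2). Fixing $\gamma \in \Gamma$, I will show that $\pi_\gamma$ is continuous at an arbitrary point $e_0 \in E$ by producing an open neighbourhood of $e_0$ on which $\pi_\gamma$ coincides with a manifestly continuous map.

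Concretely, set $x_0 = p(e_0)$ and write the (finite) homogeneous decomposition $e_0 = e_{0,\gamma_1} + \cdots + e_{0,\gamma_n}$ in the stalk $E_{x_0}$, with the $\gamma_i$ distinct. Since each $e_{0,\gamma_i}$ lies in the open set $E_{\gamma_i}$, and the section images form a basis, I can choose for each $i$ a continuous local section $s_i \colon U_i \to E$ with $s_i(x_0) = e_{0,\gamma_i}$ and $s_i(U_i) \subseteq E_{\gamma_i}$; that is, $s_i$ is homogeneous of degree $\gamma_i$. Restricting to $U = U_1 \cap \cdots \cap U_n$ and using continuity of addition (A2) together with the fibre-product structure, the pointwise sum $s = s_1 + \cdots + s_n$ is a continuous local section on $U$ with $s(x_0) = e_0$, so $s(U)$ is an open neighbourhood of $e_0$.

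The key observation is that on $s(U)$ the projection is explicit. For $x \in U$ we have $s(x) = \sum_i s_i(x)$ with $s_i(x) \in (E_x)_{\gamma_i}$, and since the $\gamma_i$ are distinct this is exactly the homogeneous decomposition of $s(x)$. Hence $\pi_\gamma(s(x)) = s_i(x)$ when $\gamma = \gamma_i$ for some $i$, and $\pi_\gamma(s(x)) = 0_x = Z(x)$ otherwise. In either case $\pi_\gamma|_{s(U)}$ factors as a continuous section (either $s_i$ or $Z$) composed with $p|_{s(U)}$, and the latter is a homeomorphism onto $U$ because $s$ is a section. Therefore $\pi_\gamma$ is continuous on $s(U)$, and since $e_0$ was arbitrary, $\pi_\gamma$ is continuous.

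I expect the only real care to be needed at two routine points: first, that the local sections through the pieces $e_{0,\gamma_i}$ can be taken to \emph{remain} homogeneous, which is precisely guaranteed by the openness of $E_{\gamma_i}$ in (C2); and second, that the finite sum of sections is again a continuous section, which follows by iterating (A2) over the fibre product, using that $\supp(e_0)$ is finite. An alternative packaging would be to note that $p|_{E_\gamma} \circ \pi_\gamma = p$ and that the argument above shows $\pi_\gamma$ restricts to a homeomorphism of $s(U)$ onto an open subset of $E_\gamma$, so that $\pi_\gamma$ is a local homeomorphism; continuity then also follows from Lemma~\ref{useful}.
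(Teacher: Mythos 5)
Your proof is correct, but it takes a genuinely different route from the paper's. The paper argues globally, by computing preimages: for an open set $Y \subseteq E_\gamma$, it writes $\pi_\gamma^{-1}(Y)$ as the union, over all finite subsets $D = \{\delta_1, \dots, \delta_n\} \subseteq \Gamma \setminus \{\gamma\}$, of the fibrewise sums $T(D) = \{e_\gamma + e_1 + \cdots + e_n \mid e_\gamma \in Y,\ e_i \in E_{\delta_i}\}$, and each $T(D)$ is open because $n$-fold addition is an \emph{open} map (a fact the paper extracts from Lemma~\ref{useful}) applied to the open set $Y \times_{\G^{(0)}} E_{\delta_1} \times_{\G^{(0)}} \cdots \times_{\G^{(0)}} E_{\delta_n}$. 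You instead argue locally at a point: you decompose $e_0$ into its finitely many homogeneous pieces, thread a homogeneous continuous local section through each piece (legitimate by (C2) together with the basis of section images), sum them to a section $s$ through $e_0$, and identify $\pi_\gamma$ on the basic open set $s(U)$ with $s_i \circ p|_{s(U)}$ or $Z \circ p|_{s(U)}$. Both arguments hinge on (C1) and (C2); the trade-offs are that the paper's proof is shorter and requires no choice of sections but uses openness (not merely continuity) of addition, whereas yours uses only (A2) and the section basis, and it delivers as an immediate by-product that $\pi_\gamma$ is a local homeomorphism onto open subsets of $E_\gamma$ --- a fact the paper obtains only afterwards, by applying Lemma~\ref{useful} to the already-established continuity. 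One pedantic point: your decomposition argument should be read with the convention that the empty sum of sections is the zero section, so that the case $e_0 = 0_{x_0}$ is covered by $s = Z$; this is harmless, since $Z(U)$ is itself a basic open set on which $\pi_\gamma$ agrees with $Z \circ p$.
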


\begin{proof}
	Let $Y \subseteq E_\gamma$ be open. Let $D = \{\delta_1, \dots, \delta_n\} \subseteq \Gamma \setminus \{\gamma\}$. Then the set \[T(D) = \{e_\gamma + e_{1} + \dots + e_{_n} \mid e_\gamma \in Y, e_{1} \in E_{\delta_1}, \dots, e_{n}\in E_{\delta_n}\}\] is open in $E$ because $n$-fold addition $E \times_{\G^{(0)}} \dots \times_{\G^{(0)}} E \to E$ is an open map, and $Y  \times_{\G^{(0)}} E_{\delta_1} \times_{\G^{(0)}} \dots \times_{\G^{(0)}} E_{\delta_n}$ is an open set in $E \times_{\G^{(0)}} \dots \times_{\G^{(0)}} E$. Thus \[\pi_\gamma^{-1}(Y) = \{e \in  E \mid e_\gamma \in Y \} = \bigcup\big\{T(D) \mid D \text{ is a finite subset of }\Gamma\setminus \{\gamma\}\big\}\]
	is open, proving that $\pi_\gamma$ is continuous.
\end{proof}

Applying Lemma \ref{useful}, we can also conclude that $\pi_\gamma: E \to E_\gamma$ is a local homeomorphism.

\subsection{Dade's Theorem for $\G$-sheaves of $R$-modules}

Let $\G$ be a $\Gamma$-graded ample groupoid, and let $(E,p)$ be a $\G_\ep$-sheaf of $R$-modules. Define the graded $\G$-sheaf $E \otimes_{\G_\ep} \G$ as $\bigcup_{x \in \G^{(0)}} E \otimes \G ^x$ where $E \otimes \G ^x$ is the $R$-module generated by the symbols $\{e \otimes g \mid e \in E, g \in \G ^x, p(e) = \cod(g)\}$ subject to the relations:
\begin{align*}
	e \otimes g + e' \otimes g = (e + e') \otimes g, &&
	(re)\otimes g = r(e \otimes g), && eh \otimes g = e\otimes hg,
\end{align*}
for all $e, e' \in E$, $g \in \G$, $r \in R$, and $h \in \G_\ep$. Indeed, $E \otimes_{\G_\ep} \G$ carries the structure of a $\G$-sheaf of $R$-modules as follows. Define $p(e) = x$ if $e \in E \otimes \G^x$. For $e = \sum e_i \otimes g_i \in E \otimes_{\G_\ep} \G$ and $g \in \G$ with $p(e) = \cod(g)$, define $eg = \sum e_i \otimes g_i g$. The topology on $E \otimes_{\G_\ep} \G$ is generated by the basis of open sets
\begin{align} \label{sfU}
&\Big\{Z(t,U) \mid U \in B^{\rm co}(\G^{(0)}),\ t \in \Gamma_c(E)\otimes_{A_R(\G)_\ep} A_R(\G)\Big\},\text{ where}\\ \notag
&Z(s_1 \otimes f_1 + \dots + s_n \otimes f_n, U) := \bigg\{ \sum_{g \in {\G^x}}\big(f_1(g) s_1(\cod(g)) + \dots + f_n(g) s_n(\cod(g))\big) \otimes g \Bigm| x \in U \bigg\}.
\end{align}
(The origin of this complicated-looking basis becomes apparent in the upcoming lemma.)
Assigning to each $e \otimes g$ the degree $\kappa(e \otimes g) = \kappa(g)$, and noting that the relations defining $E \otimes \G^x$ are homogeneous, we have $E \otimes \G^x = \bigoplus_{\gamma \in \Gamma} E \otimes \G^x_\gamma$. Moreover, $(E\otimes_{\G_\ep}\G)_\gamma$ is open because it is a union of all basic open sets $Z(t,U)$ for $t \in \Gamma_c(E) \otimes A_R(\G)_\gamma$. This makes $E \otimes_{\G_\ep} \G$ a graded $\G$-sheaf of $R$-modules.

\begin{remark}
	In an earlier version of this paper, we gave a different (longer and more difficult) construction of the sheaf $E \otimes_{\G_\ep} \G$. Essentially, $E \otimes \G^x$ is a quotient of the free $R$-module generated by $E \times_{\G^{(0)}} \G^x$ and the basis of open sets in $E \otimes_{\G_\ep} \G = \bigcup_{x \in \G^{(0)}}E \otimes \G^x$ consists of finite sums of subsets $\{e \otimes g \mid (e,g) \in s(W)\times_{\G^{(0)}} B\}$ where $W$ is a compact open subset of $\G^{(0)}$, $s: W \to E$ is a continuous local section, and $B \subseteq \G$ is a compact open bisection.
\end{remark}

Alternatively, to build a $\G$-sheaf of $R$-modules from the $\G_\ep$-sheaf of $R$-modules $(E,p)$, we can apply the following sequence of functors:
\begin{equation}\label{three functors}\xymatrixrowsep{0.65pc}
\xymatrix{ 	\Modd_R\G_\ep
	 \ar[r]^{\Gamma_c}  & \Modd A_R(\G_\ep)
	 \ar[r]^{\mathcal{J}} & \Modd A_R(\G) \ar[r]^{\operatorname{Sh}} & \Modd _R \G \\
	 (E,p) \ar@{|->}[r] & \Gamma_c(E) \ar@{|->}[r]  & \Gamma_c(E) \bigotimes_{A_R(\G)_\ep} A_R(\G) \ar@{|->}[r] & \operatorname{Sh}\left(\Gamma_c(E) \bigotimes_{A_R(\G)_\ep} A_R(\G)\right).
}
\end{equation}

\begin{lemma} \label{iso1}
	Let $(E,p)$ be a $\G_\ep$-sheaf of $R$-modules, where $\G$ is a $\Gamma$-graded ample groupoid. Then $E \bigotimes_{\G_\ep}\G$ and $\operatorname{Sh}\big(
	\Gamma_c(E) \bigotimes_{A_R(\G)_\ep} A_R(\G)
	\big)$ are isomorphic $\G$-sheaves of $R$-modules.
\end{lemma}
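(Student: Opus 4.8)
The plan is to write down the evident candidate map, check that it is a well-defined $\G$-equivariant $R$-module homomorphism on each stalk, and then use the explicit bases (\ref{Um}) and (\ref{sfU}) together with Lemma \ref{useful} to upgrade it to a homeomorphism. Set $M = \Gamma_c(E) \otimes_{A_R(\G)_\ep} A_R(\G)$, so that the stalk of $\operatorname{Sh}(M)$ at $x$ is the direct limit $\widetilde{M}_x = \{[m]_x \mid m \in M\}$. I would define $\Phi : \operatorname{Sh}(M) \to E \otimes_{\G_\ep} \G$ stalk-by-stalk by
\[
\Phi_x\Big(\big[\textstyle\sum_i s_i \otimes f_i\big]_x\Big) = \sum_{g \in \G^x} \sum_i f_i(g)\, s_i(\cod(g)) \otimes g \in E \otimes \G^x,
\]
which is precisely the rule carrying the basic open set $(U, \sum_i s_i \otimes f_i)$ of (\ref{Um}) onto the basic open set $Z(\sum_i s_i \otimes f_i, U)$ of (\ref{sfU}). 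This is why the basis in (\ref{sfU}) was arranged to look the way it does.

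First I would verify that $\Phi_x$ is well defined. Independence of the open set $U$ is immediate, since the right-hand side depends only on the germ data $f_i(g)$ and $s_i(\cod(g))$ for $g \in \G^x$. The substantive point is that $\Phi_x$ must respect the defining relation $s\bm{1}_C \otimes f = s \otimes \bm{1}_C f$ of the tensor product over $A_R(\G)_\ep$, for $C \in B^{\rm co}_\ep(\G)$; this is where the relation $eh \otimes g = e \otimes hg$ (for $h \in \G_\ep$) in $E \otimes \G^x$ and the module action (\ref{ARG-mod}) of $\Gamma_c(E)$ must match, and it suffices to check it on characteristic functions $f = \bm{1}_B$, $h = \bm{1}_C$ of homogeneous compact open bisections. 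Granting this, $R$-linearity and $\G$-equivariance are read off directly from the formula, since the $\G$-action on both sides is right translation of the $\G$-factor.

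Next I would prove that each $\Phi_x$ is bijective. Surjectivity is easy: a generator $e \otimes g$ with $g \in \G^x$ and $e \in E_{\cod(g)}$ is the image of $[s \otimes \bm{1}_B]_x$, where $B \in B^{\rm co}(\G)$ contains $g$ and $s \in \Gamma_c(E)$ is any section with $s(\cod(g)) = e$. For injectivity, the cleanest route is to identify both stalks with $\bigoplus_{[g_0] \in \G_\ep\backslash\G^x} E_{\cod(g_0)}$, the sum running over representatives of the left $\G_\ep$-orbits in $\G^x$. On the target, the relation $eh \otimes g = e \otimes hg$ lets one push every $e \otimes g$ onto a chosen orbit representative $g_0$, giving $E \otimes \G^x \cong \bigoplus_{[g_0]} E_{\cod(g_0)}$. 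On the source, unwinding the direct-limit description of $\operatorname{Sh}$ and using that the stalk of the right regular representation is $R\G^x$ (Example \ref{first-ex}) gives $\widetilde{M}_x \cong \Gamma_c(E) \otimes_{A_R(\G)_\ep} R\G^x$; decomposing $R\G^x$ into $\G_\ep$-orbit summands $R(\G_\ep g_0) \cong R\G_\ep^{\cod(g_0)}$ and applying the stalk-wise content of Steinberg's Equivalence Theorem for $\G_\ep$ (namely $\operatorname{Sh}(\Gamma_c(E)) \cong E$) to each summand yields $\Gamma_c(E) \otimes_{A_R(\G)_\ep} R\G_\ep^{\cod(g_0)} \cong E_{\cod(g_0)}$. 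Under these identifications $\Phi_x$ becomes the identity, so it is an isomorphism.

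Finally, I would assemble the global statement. Since $\Phi$ commutes with the projections to $\G^{(0)}$ and, by construction, maps each basic open (\ref{Um}) onto the basic open (\ref{sfU}) with the same data, it is an open map; Lemma \ref{useful} then shows it is also continuous, and being bijective on each stalk it is a bijection, hence a continuous open bijection, i.e.\ a homeomorphism. Combined with the stalk-wise $R$-linearity and $\G$-equivariance, $\Phi$ is an isomorphism of $\G$-sheaves of $R$-modules, and since $\kappa(e \otimes g) = \kappa(g)$ matches the degree of the $A_R(\G)$-factor it is in fact a graded isomorphism. The main obstacle is the well-definedness over the tensor relation together with the injectivity on stalks; these are really the same bookkeeping, reconciling the left $\G_\ep$-action built into $E$ with the $A_R(\G_\ep)$-module structure on $\Gamma_c(E)$, which is exactly what Steinberg's Equivalence Theorem for $\G_\ep$ supplies.
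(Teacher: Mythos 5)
Your proposal is correct, and its skeleton coincides with the paper's: the comparison map is the same (on the stalk identification $\widetilde{N}_x \cong \Gamma_c(E)\otimes_{A_R(\G)_\ep}R\G^x$ it is the paper's $\pi_x: s\otimes g \mapsto s(\cod(g))\otimes g$), and the topological part — basic opens (\ref{Um}) map onto basic opens (\ref{sfU}), so Lemma \ref{useful} upgrades openness to continuity and hence to a homeomorphism — is exactly how the paper handles the topology. Where you genuinely diverge is the proof of stalkwise bijectivity. The paper constructs an explicit two-sided inverse $\sigma_x: \sum e_i\otimes g_i \mapsto \sum s_i\otimes g_i$, choosing sections $s_i$ through the $e_i$; the crux there is that $\sigma_x$ is well defined, proved by an elementary germ argument (two sections agreeing at $\cod(g)$ agree on a compact open set $W$, whence $s\otimes g = s\bm{1}_W\otimes g = t\bm{1}_W\otimes g = t\otimes g$). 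You instead decompose $R\G^x = \bigoplus R(\G_\ep g_0)$ over representatives of $\G_\ep\backslash\G^x$ as a left $A_R(\G)_\ep$-module, decompose $E\otimes\G^x \cong \bigoplus E_{\cod(g_0)}$ via the relation $eh\otimes g_0 = e\otimes hg_0$, and invoke the counit isomorphism $\operatorname{Sh}(\Gamma_c(E))\cong E$ of Steinberg's Equivalence Theorem for $\G_\ep$ on each summand, under which $\Phi_x$ becomes the identity. Both routes are valid and non-circular (the ungraded Steinberg theorem is external input for the paper as well). The paper's germ argument is more self-contained — it re-proves by hand precisely the injectivity of the germ-to-value map that you import from Steinberg's theorem — while your route buys a cleaner structural picture (both stalks are canonically $\bigoplus_{\G_\ep g_0\in\G_\ep\backslash\G^x}E_{\cod(g_0)}$) and makes explicit the well-definedness of the forward map over the tensor relation, a point the paper relegates to the omitted "details that are not difficult to check."
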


\begin{proof}
	Let $M = \Gamma_c(E)$ and $N = M \bigotimes_{A_R(\G)_\ep} A_R(\G)$. By definition, $\operatorname{Sh}(N) = \widetilde{N} =  \bigcup_{x \in \G^{(0)}} {\widetilde{N}}_x$ where \begin{align*}
	\widetilde{N}_x &=  \lim_{\substack{\longrightarrow \\ x \in U }}N \bm{1}_U =  \lim_{\substack{\longrightarrow \\ x \in U }} \Big(M \bigotimes_{A_R(\G)_\ep} A_R(\G)*\bm{1}_U\Big) = M \bigotimes_{A_R(\G)_\ep} \Big(\lim_{\substack{\longrightarrow \\ x \in U }}A_R(\G)*\bm{1}_U \Big) \cong M \bigotimes_{A_R(\G)_\ep} R\G^x.
	\end{align*}
	For a justification of the isomorphism in the last step, see Example \ref{first-ex}. Define
	\begin{align} \label{pix}
		&\pi_x: M \bigotimes_{A_R(\G)_\ep} R\G^x \to E \otimes \G^x, & s \otimes g \mapsto s(\cod(g)) \otimes g, &&
		 \text{for all } s \in M,\ g \in \G^x;
		 \\ \label{sigmax}
		&\sigma_x: E \otimes \G^x \to M \bigotimes_{A_R(\G)_\ep} R\G^x, & \sum_i e_i \otimes g_i \mapsto \sum_i s_i \otimes g_i, && \text{where } s_i \in M, e_i = s_i(\cod(g_i)), g_i \in \G^x.
	\end{align}
	To show that $\sigma_x$ is unambiguously defined, suppose $e \otimes g \in E \otimes \G^x$ and $s, t \in M$ have $s(\cod(g)) = t(\cod(g)) = e$. Pick some neighbourhood $U$ of $\cod(g)$ in $\G^{(0)}$. Since $s(\cod(g)) \in s(U) \cap t(U)$ and $s(U)$ and $t(U)$ are open in $E$, there exists some open set $A \subseteq s(U)\cap t(U)$ with $s(\cod(g)) \in A$, and $s$ and $t$ agree on $W:= p(A)$. Thus, applying (\ref{ARG-mod}) and (\ref{RGx-action}), we have \[s \otimes g = s \otimes \bm{1}_{W} g = s \bm{1}_{W} \otimes g = t \bm{1}_{W} \otimes g = t \otimes \bm{1}_{W}g = t \otimes g.\]
	We omit some details that are not difficult to check: $\sigma_x$ respects the relations on $E \otimes \G ^x$, and $\pi_x \sigma_x$ and $\sigma_x \pi_x$ are the identity homomorphisms. Since $E \otimes_{\G_\ep} \G = \bigcup_{x \in \G^{(0)}} E\otimes \G ^x$ and $\operatorname{Sh}(N) = \bigcup_{x \in \G^{(0)}}\widetilde{N}_x$, the functions (\ref{pix}) and (\ref{sigmax}) extend to functions $\pi: \operatorname{Sh}(N) \to E \otimes_{\G_\ep} \G$ and $\sigma: E \otimes_{\G_\ep} \G \to \operatorname{Sh}(N)$ which are inverse to each other. One can check that $\sigma$ and $\pi$ are $\G$-equivariant, and that the topology transferred from $\operatorname{Sh}(N)$ to $E \otimes_{\G_\ep} \G$ is the one generated by the basis in (\ref{sfU}). Conclude that $\pi$ and $\sigma = \pi^{-1}$ are isomorphisms of $\G$-sheaves of $R$-modules.
\end{proof}

Define the \textit{induction functor}, mapping a $\G_\ep$-sheaf of $R$-modules to a graded $\G$-sheaf of $R$-modules:
\begin{align}\label{funII}
\mathcal J: \Modd_{R}\G_\varepsilon &\longrightarrow \mathcal \Gr_{R}\G,\\*
E&\longmapsto E\otimes_{\G_\varepsilon} \G. \notag
\end{align}
If $E, F$ are $\G_\ep$-sheaves of $R$-modules and $\phi: E \to F$ is a morphism, then we define
\begin{align*}  
\mathcal{J}(\phi) = \phi \otimes \id : E \otimes_{\G_\ep} \G &\longrightarrow F \otimes_{\G_\ep} \G \\
\sum e_i \otimes g_i &\longmapsto \sum \phi(e_i) \otimes g_i. \notag
\end{align*}
One can check that $\mathcal{J}(\phi)$ is continuous, $\G$-equivariant, and restricts to graded homomorphisms on the stalks. That is, $\mathcal{J}(\phi)$ is a morphism of graded $\G$-sheaves of $R$-modules.

On the other hand, if $E$ is a graded $\G$-sheaf of $R$-modules, it is easy to see that $E_\varepsilon:=\bigcup_{x\in \G^{(0)}} (E_x)_\varepsilon$ is a $\G_\varepsilon$-sheaf of $R$-modules. 
Moreover, a graded morphism $\phi:E\rightarrow F$ restricts to a morphism $\phi_\varepsilon:E_\varepsilon\rightarrow F_\varepsilon$. This gives rise to a \textit{restriction functor}, mapping a graded $\G$-sheaf of $R$-modules to a $\G_\ep$-sheaf of $R$-modules:
\begin{align}\label{funI}
\mathcal I: \Gr_{R}\G &\longrightarrow \mathcal \Modd_{R}\G_\varepsilon,\\
E&\longmapsto E_\varepsilon, \notag\\
\phi &\longmapsto \phi_\varepsilon \notag.
\end{align}

Recall that for $E$, a $\G_\ep$-sheaf of $R$-modules, we have $(E \otimes_{\G_\ep} \G)_\ep = \bigcup_{x \in \G^{(0)}} E \otimes \G^x_\ep$. For every $x \in \G^{(0)}$ there is an isomorphism $E \otimes \G^x_\ep \to E_x$, sending $e \otimes g \mapsto eg$, with inverse sending $e \mapsto e \otimes p(e)$. This yields map:
\begin{align} \label{mapI}
	\eta:\mathcal{IJ}(E) = \mathcal{I}(E \otimes_{\G_\ep}\G) = (E \otimes_{\G_\ep}\G)_\ep &\overset{\cong}{\longrightarrow} E \\ \notag
	\eta:\sum e_i \otimes g_i &\longmapsto \sum e_i g_i.
\end{align}
In a moment we shall prove that $\eta$ is an isomorphism of $\G_\ep$-sheaves of $R$-modules and, indeed, $\eta$ is a natural transformation from $\mathcal{IJ}$ to the identity, so we have an isomorphism of functors $\mathcal{IJ} \cong \id_{\Gr_R\G}$.

On the other hand, there is a natural transformation $\mathcal{JI} \to \id_{\Modd_R\G}$. Namely, for a graded $\G$-sheaf $E$, and $x \in \G^{(0)}$, there is a homomorphism $E_\ep \otimes_{\G_\ep} \G^x \to E_x$ sending $e \otimes g \mapsto eg$. This yields a map:
\begin{align} \label{mapII}
	\theta:\mathcal{JI}(E) = \mathcal{J}(E_\ep) = E_\ep \otimes_{\G_\ep} \G & \longrightarrow E\\ \notag
	\theta: \sum e_i \otimes g_i &\longmapsto \sum e_ig_i.
\end{align}

The following lemma formalises an important part of the preceding discussion.

\begin{lemma} \
	\begin{enumerate}[\rm(1)]
	\item If $(E,p)$ is a $\G_\ep$-sheaf of $R$-modules, $\eta$ from (\ref{mapI}) is an isomorphism of $\G_\ep$-sheaves of $R$-modules.
	\item If $(E,p)$ is a graded $\G$-sheaf, $\theta$ from (\ref{mapII}) is a graded morphism of $\G$-sheaves of $R$-modules.
	\end{enumerate}
\end{lemma}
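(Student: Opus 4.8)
The plan is to verify the two claimed structural properties of the maps $\eta$ and $\theta$ directly, treating them stalk-by-stalk and then checking the global continuity and equivariance conditions. For part (1), I would first confirm that $\eta$ is well defined and bijective on each stalk. On the stalk over $x \in \G^{(0)}$, the map $\eta$ restricts to $E \otimes \G^x_\ep \to E_x$, $e \otimes g \mapsto eg$. Since every $g \in \G^x_\ep$ is a unit-space element or composable appropriately with $p(e) = \cod(g)$, and since $\G_\ep$ acts on the $\G_\ep$-sheaf $E$, the assignment $e \mapsto e \otimes p(e)$ supplies a two-sided inverse: one checks $eg \mapsto eg \otimes p(eg) = e \otimes g$ using the tensor relation $eh \otimes g = e \otimes hg$ with $h = g$, together with axioms (B1)--(B3). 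This gives a stalkwise $R$-module isomorphism.

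Next I would upgrade this to an isomorphism of $\G_\ep$-sheaves of $R$-modules, which requires checking continuity in both directions and $\G_\ep$-equivariance. For equivariance, for $h \in \G_\ep$ with $p(e) = \cod(h)$ one computes $\eta((e \otimes g)h) = \eta(e \otimes gh) = e(gh) = (eg)h = \eta(e \otimes g)h$ using axiom (B3). For continuity, rather than argue directly, I would invoke Lemma~\ref{iso1}: the sheaf $E \otimes_{\G_\ep} \G$ is isomorphic to $\operatorname{Sh}(\Gamma_c(E) \otimes_{A_R(\G)_\ep} A_R(\G))$, so the $\ep$-component of $E \otimes_{\G_\ep} \G$ carries a topology generated by the basic open sets in (\ref{sfU}) restricted to $\gamma = \ep$. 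Under $\eta$ these basic opens map to sets of the form $s(U)$ for continuous local sections $s$ of $E$, which are exactly the basic opens of $E$; applying Lemma~\ref{useful}, continuity of $\eta$ or of its inverse suffices to conclude $\eta$ is a homeomorphism. Finally, naturality in $E$ is a routine diagram chase: for a morphism $\phi: E \to F$, both $\eta_F \circ \mathcal{IJ}(\phi)$ and $\phi \circ \eta_E$ send $\sum e_i \otimes g_i$ to $\sum \phi(e_i)g_i$, using $\G_\ep$-equivariance of $\phi$.

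For part (2), the map $\theta$ is again defined stalkwise by $e \otimes g \mapsto eg$ on $E_\ep \otimes_{\G_\ep} \G^x \to E_x$, so the first task is well-definedness: I must check that $\theta$ respects the defining relations of the tensor product, in particular that $eh \otimes g \mapsto (eh)g = e(hg)$ agrees with the image of $e \otimes hg$, which follows from associativity axiom (B3). That $\theta$ is $\G$-equivariant follows from the same associativity: $\theta((\sum e_i \otimes g_i)g') = \sum e_i(g_ig') = (\sum e_i g_i)g' = \theta(\sum e_i \otimes g_i)g'$. For the grading, since $e_i \in E_\ep$ lives in the $\ep$-component and the degree on $E_\ep \otimes_{\G_\ep}\G$ is $\kappa(e \otimes g) = \kappa(g)$, axiom (C3) gives $e_i g_i \in E_\ep \G_{\kappa(g_i)} \subseteq E_{\kappa(g_i)}$, so $\theta$ maps the $\gamma$-component into $E_\gamma$; hence $\theta$ is a graded morphism. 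Continuity of $\theta$ can be verified on the basic opens of (\ref{sfU}) (with $E$ replaced by $E_\ep$): the image of $Z(t, U)$ is a continuous local section of $E$ over $U$, hence open, so by Lemma~\ref{useful} $\theta$ is continuous.

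I expect the main obstacle to be the continuity arguments, rather than the algebraic verifications, since the latter are immediate consequences of the sheaf axioms (B1)--(B3) and (C1)--(C3). The subtlety is that the topology on $E \otimes_{\G_\ep} \G$ is defined through the somewhat opaque basis (\ref{sfU}) coming from the Steinberg-algebra tensor product, so identifying the images of these basic opens under $\eta$ and $\theta$ with genuinely open subsets of $E$ requires unwinding the correspondence between sections $t \in \Gamma_c(E) \otimes A_R(\G)_\gamma$ and local sections of $E$. Lemma~\ref{useful} is the key device that lets me sidestep proving the open-map and local-homeomorphism properties separately: establishing continuity of either $\eta$ or $\theta$ (and, for part (1), also of $\eta^{-1}$) is enough. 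The remaining naturality and equivariance checks are formal.
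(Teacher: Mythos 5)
Your proposal is correct and follows essentially the same route as the paper: the algebraic properties (stalkwise bijectivity, equivariance, grading) are verified directly from the sheaf axioms and tensor relations, and continuity is handled by matching the basic open sets $Z(t,U)$ of (\ref{sfU}) with section images $\big(\sum s_i f_i\big)(U)$ in $E$ and then invoking Lemma~\ref{useful}. The only cosmetic differences are that for $\eta$ the paper computes preimages ($\eta^{-1}(s(U)) = Z(s \otimes \bm{1}_U, U)$, giving continuity directly) where you compute images (giving openness, then continuity via Lemma~\ref{useful}), and your appeal to Lemma~\ref{iso1} is unnecessary since the topology on $E \otimes_{\G_\ep} \G$ is already defined by the basis (\ref{sfU}).
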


\begin{proof}
	It is clear that $\eta$ is invertible, $\G_\ep$-equivariant, and restricts to $R$-module isomorphisms on the stalks. To show that $\eta$ is continuous, take a basic open set $s(U) \subseteq E$ where $U \in B^{\rm co}(\G^{(0)})$ and $s: U \to E$ is a continuous local section. Then $\eta^{-1}(s(U)) = Z(s \otimes \bm{1}_U, U)$ (see (\ref{sfU})) is open in $(E \otimes_{\G_\ep} \G)_\ep$.

	Clearly, $\theta$ restricts to homomorphisms on the stalks, and it is graded and $\G$-equivariant. Suppose $Z(t,U)$ is a basic open set in $E_\ep \otimes_{\G_\ep}\G$, where $t = \sum s_i \otimes f_i \in \Gamma_c(E_\ep)\bigotimes_{A_R(\G)_\ep} A_R(\G)$, and $U \in B^{\rm co}(\G^{(0)})$. A short calculation reveals that $\theta\left(Z\left(t, U\right)\right) = \big(\sum s_i f_i \big)(U)$, so it is open in $E$ because $\sum s_i f_i \in \Gamma_c(E)$ (see (\ref{ARG-mod})). Thus $\theta$ is an open map, and by Lemma \ref{useful}, $\theta$ is continuous. 
\end{proof}

We are in a position to prove a groupoid version of Dade's Theorem (see Theorem~\ref{dadesthm}, \cite[Theorem 2.8]{dade1}, and \cite[\S1.5]{rh}).

\begin{theorem}\label{strgro}
	Let $\G$ be a $\Gamma$-graded ample groupoid. Then $\G$ is strongly graded if and only if the functors $\mathcal I$ and $\mathcal J$ {\upshape (see (\ref{funII}),(\ref{funI}))} are mutually inverse equivalences of categories.
\end{theorem}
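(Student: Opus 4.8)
The plan is to reduce the whole statement to a single assertion about the natural transformation $\theta$ of (\ref{mapII}). The preceding lemma shows that $\eta$ of (\ref{mapI}) is \emph{always} a natural isomorphism $\mathcal{IJ}\cong\id_{\Modd_R\G_\ep}$; hence $\mathcal I$ and $\mathcal J$ are mutually inverse equivalences exactly when $\theta\colon\mathcal{JI}\to\id_{\Gr_R\G}$ is a natural isomorphism as well. So it suffices to prove that $\G$ is strongly graded if and only if $\theta_E$ is an isomorphism of graded $\G$-sheaves for every graded $\G$-sheaf $E$. Since $\theta_E$ is a continuous graded morphism covering the identity on $\G^{(0)}$, Lemma \ref{useful} makes it a local homeomorphism, so it is an isomorphism precisely when it is a bijection; and this reduces, stalk by stalk and degree by degree, to bijectivity of the $R$-linear maps $\theta_E\colon E_\ep\otimes\G^x_\gamma\to(E_x)_\gamma$, $\sum e_i\otimes g_i\mapsto\sum e_ig_i$, for $x\in\G^{(0)}$ and $\gamma\in\Gamma$.

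For the forward implication, assume $\G$ is strongly graded and fix $x$ and $\gamma$. By Lemma \ref{stg}(3) we have $\dom(\G_\gamma)=\G^{(0)}$, so we may choose a single $g_0\in\G^x_\gamma$. The key observation is that any $g\in\G^x_\gamma$ satisfies $gg_0^{-1}\in\G_\ep$ and $g=(gg_0^{-1})g_0$; applying the defining relation $eh\otimes g_0=e\otimes hg_0$ with $h=gg_0^{-1}$ then yields $e\otimes g=\big(e(gg_0^{-1})\big)\otimes g_0$. Consequently every element of $E_\ep\otimes\G^x_\gamma$ can be written as $e'\otimes g_0$ for a single $e'\in(E_{\cod g_0})_\ep$, and under $\theta_E$ it maps to $e'g_0$. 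I would finish by checking that $e'\mapsto e'g_0$ is an $R$-module isomorphism $(E_{\cod g_0})_\ep\to(E_x)_\gamma$ with inverse $w\mapsto wg_0^{-1}$, both maps being well defined and mutually inverse by the groupoid axioms (B1)--(B3) and the grading axiom (C3); this forces $\theta_E$ to be bijective on each homogeneous stalk. Injectivity is the only delicate point, and the collapse of $E_\ep\otimes\G^x_\gamma$ onto the single-generator form $e'\otimes g_0$ is exactly what makes it work. This is the groupoid analogue of the local-unit manipulation in the proof of Lemma \ref{corr}. Together with $\eta$, it shows that $\mathcal I$ and $\mathcal J$ are mutually inverse equivalences.

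For the converse, suppose $\mathcal I$ and $\mathcal J$ are mutually inverse equivalences, so that $E\cong\mathcal{JI}(E)=\mathcal J(E_\ep)$ for every graded $\G$-sheaf $E$. Arguing by contraposition, suppose $\G$ is \emph{not} strongly graded; then by Lemma \ref{stg}(3) there exist $\gamma\in\Gamma$ and $x_0\in\G^{(0)}$ with $\G^{x_0}_\gamma=\emptyset$. For any $\G_\ep$-sheaf $M$ the generating set of $M\otimes\G^{x_0}_\gamma$ is empty, so every sheaf in the image of $\mathcal J$ has vanishing $\gamma$-component at $x_0$. To reach a contradiction I would exhibit a graded $\G$-sheaf whose $\gamma$-component at $x_0$ is nonzero: take $N=\operatorname{Sh}(A_R(\G_\ep))$, whose stalks are nonzero by Example \ref{first-ex}, and set $E=\mathcal J(N)(\gamma^{-1})$ using the shift of (\ref{shift-sheaf}). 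Then $(E_{x_0})_\gamma=(\mathcal J(N)_{x_0})_\ep\cong N_{x_0}\neq0$, while $E\cong\mathcal J(E_\ep)$ would force $(E_{x_0})_\gamma=E_\ep\otimes\G^{x_0}_\gamma=0$, a contradiction. Hence $\G$ must be strongly graded. The main obstacle throughout is the forward-direction injectivity of $\theta_E$; once the single-generator reduction is in place, the remainder is bookkeeping with (B1)--(B3) and (C3).
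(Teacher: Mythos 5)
Your proof is correct, and it splits naturally into two halves of different character. The forward direction is essentially the paper's argument: fixing a single $g_0 \in \G^x_\gamma$ (which exists by Lemma \ref{stg}) and using the balancing relation $eh \otimes g_0 = e \otimes hg_0$ to collapse every element of $E_\ep \otimes \G^x_\gamma$ to the form $e' \otimes g_0$ is precisely the paper's inverse map $e \mapsto eh^{-1} \otimes h$ in disguise, and your appeal to Lemma \ref{useful} to upgrade stalkwise bijectivity to an isomorphism of sheaves is the same step the paper leaves implicit. The converse, however, takes a genuinely different route. The paper assumes the equivalence is implemented by the specific transformations (\ref{mapI}) and (\ref{mapII}) (its proof opens with exactly this parenthetical), and uses surjectivity of $\theta$ on the shifted regular sheaf $R\G(\beta)$ to deduce $\G_{\beta\gamma} \subseteq \G_\beta \G_\gamma$ directly. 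You instead argue by contraposition, using only that every graded $\G$-sheaf is isomorphic to an induced one: a failure of strong grading gives an empty fibre $\G^{x_0}_\gamma$, so every sheaf in the image of $\mathcal J$ has vanishing $\gamma$-component over $x_0$, while the shifted induced sheaf $\mathcal J(N)(\gamma^{-1})$ has $\gamma$-component at $x_0$ isomorphic to $N_{x_0} \neq 0$. This buys something real: your argument proves the theorem under its literal hypothesis, namely that $\mathcal I$ and $\mathcal J$ are mutually inverse via \emph{arbitrary} natural isomorphisms, whereas the paper's converse as written needs the isomorphisms to be $\eta$ and $\theta$ themselves. Given the paper's own caution (after Theorem \ref{dadesthm}) that an abstract equivalence between $\Gr A$ and $\Modd A_\ep$ does not force strong grading, this distinction is not pedantic, and your version settles it in the stronger form. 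One small tidy-up: your opening reduction claims that the functors are mutually inverse equivalences ``exactly when'' $\theta$ is a natural isomorphism; the nontrivial direction of that claim is never proved, but it is also never used, since your converse bypasses $\theta$ entirely — deleting that sentence would make the logical structure cleaner.
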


\begin{proof}
	($\Rightarrow$) Recall that $\mathcal{IJ} \cong \id_{\Modd _R\G}$ holds for any $\G$. So, we need to prove that $\G$ is strongly graded implies $\mathcal{JI} \cong \id_{\Gr _R\G_{\ep}}$. For a graded $\G$-sheaf
	$E$,  we have  $\mathcal J  \mathcal I (E) = E_\varepsilon \otimes_{\G_\varepsilon} \G$. Assuming that $\G$ is strongly graded, we show that the natural transformation 
	$\theta:E_\varepsilon \otimes_{\G_\varepsilon} \G \rightarrow E$ (see (\ref{mapII}))
	is an isomorphism of graded
	$\G$-sheaves.  Since $\G$ is
	strongly graded, using Lemma~\ref{stg} (2), it follows that for $\ga, \de \in \Ga$,
	\begin{equation}\label{robin}
	\bigcup_{x\in \G^{(0)}} (E_x)_{\ga \de} =  \bigcup_{x\in \G^{(0)}} (E_x)_{\ga\de} \G_\varepsilon =  \bigcup_{x\in \G^{(0)}} (E_x)_{\ga \de} \G_{\de^{-1}}\G_{\de}  \subseteq 
	\bigcup_{x\in \G^{(0)}} (E_x)_{\ga}  \G_{\de} \subseteq \bigcup_{x\in \G^{(0)}} (E_x)_{\ga \de}.
	\end{equation}
	Thus $E_\gamma \G_\delta= \bigcup_{x\in \G^{(0)}} (E_x)_{\ga} \G_{\de} = \bigcup_{x\in \G^{(0)}} (E_x)_{\ga \de} = E_{\gamma \delta}$. Therefore, 
	$\theta
	( E_\varepsilon \otimes_{\G_\varepsilon} \G_{\ga} ) =  E_\varepsilon \G_{\ga}= E_\gamma$, which shows that $\theta$ is
	surjective.
	
	For the injectivity of $\theta:E_\varepsilon \otimes_{\G_\varepsilon} \G \rightarrow E$, it is enough to show that the restriction 
	$\theta |_x:E_\varepsilon \otimes_{\G_\varepsilon} \G^x \rightarrow E_x$ is injective for any $x\in \G^{(0)}$. In turn, it is enough to show that the restriction $(\theta |_x)_\gamma :E_\varepsilon \otimes_{\G_\varepsilon} \G^x_\gamma \rightarrow (E_x)_\gamma$ is injective for any $\gamma \in \Gamma$. Lemma~\ref{stg} (2) yields $\G_\varepsilon=\G_{\gamma^{-1}}\G_\gamma$, so we can fix an $h\in \G_\gamma$ such that $x=h^{-1}h$. Define an $R$-homomorphism $(\psi |_x)_\gamma: (E_x)_\gamma \rightarrow E_\varepsilon \otimes_{\G_\varepsilon} \G^x_\gamma$ by $e\mapsto eh^{-1} \otimes h$. Now, if $e \in (E_x)_\gamma$, 
	\[(\theta |_x)_\gamma (\psi |_x)_\gamma (e)=(\theta |_x)_\gamma (eh^{-1} \otimes h)=eh^{-1}h=ex=e.\] 
	On the other hand, if $\sum e_i \otimes g_i \in E_\ep \otimes_{\G_\ep}\G^x_\gamma$,
	\begin{multline*}(\psi |_x)_\gamma (\theta |_x)_\gamma \left(\sum e_i\otimes g_i\right)=(\psi |_x)_\gamma \left(\sum e_i g_i\right)\\=\sum e_i g_ih^{-1}\otimes h=\sum e_i\otimes g_ih^{-1} h_i=\sum e_i\otimes g_ix=\sum e_i\otimes g_i.
	\end{multline*}
	The maps $(\psi |_x)_\gamma$ and $(\theta|_x)_\gamma$ are inverses, which implies that $\theta$ is injective. 
	Thus $\mathcal J  \mathcal I (E)= E_\varepsilon \otimes_{\G_\varepsilon} \G\cong E$. Since the morphisms $\eta$ and $\theta$ are natural, this shows that 
	$\mathcal J  \mathcal I \cong \id_{\Gr_R\G}$.

	($\Leftarrow$) Assume $\mathcal I$ and $\mathcal J$ are mutually inverse equivalences (under (\ref{mapI}) and (\ref{mapII})).
	Since $\theta$ is an isomorphism it follows that for any graded $\G$-sheaf of $R$-modules $E$, and any $\gamma\in \Gamma$, $y\in \G^{(0)}$, we have 
	\begin{equation}\label{nohngh}
	E_\varepsilon {\G^y_\gamma} = (E_y)_\gamma.
	\end{equation} We consider the graded $\G$-sheaf of $R$-modules $R\G$ constructed as follows: $R\G:=\bigcup_{x\in \G^{(0)}}R\G^x$, where $R\G^x = \bigoplus_{\gamma \in \Gamma}R\G^x_\gamma$ is a free $R$-module with basis $\G^x$ (see Example \ref{first-ex}). Let $\beta, \gamma \in \Gamma$, and consider the \textit{$\beta$-shifted} graded $\G$-sheaf $E:=R\G(\beta)$ where $(E_x)_\gamma = R\G^x_{\beta\gamma}$; see (\ref{shift-sheaf}). From~(\ref{nohngh}) we get 
	\[
	E_\ep \G_\gamma^y = \bigg(\bigcup_{x\in \G^{(0)}}(E_x)_\varepsilon \bigg) {\G^y_\gamma} =(E_y)_\gamma,
	\]
	which is to say
	\[ \bigg(\bigcup_{x\in \G^{(0)}}R\G^x_\beta \bigg) \G^y_\gamma =R\G^y_{\beta\gamma}\]
	for every $x \in \G^{(0)}$. In particular if $g\in \G_{\beta\gamma}^y$, then the above equation implies that $g\in \G_\beta \G_\gamma$.
	Thus  $\G_{\beta\gamma} \subseteq \G_{\beta} \G_{\gamma}$. This shows that $\G$ is strongly graded. 
\end{proof}

\begin{remark}
	A version of Theorem~\ref{strgro} can also be written for $\G$-sheaves of sets, and then it is only necessary to assume $\G$ is \'etale. That is, the categories $\mathcal B^{\gr}\G$ and $\mathcal B\G_\varepsilon$ are equivalent (under similar functors $\mathcal I$ and $\mathcal J$) if and only if $\G$ is a strongly graded groupoid. An interesting question is whether Theorem \ref{strgro} could be generalised for \'etale groupoids.
\end{remark}

In the following theorem, we find that the property of being strongly graded is transferred from an ample groupoid to its Steinberg algebra.

\begin{theorem}\label{strong}
	Let $\G$ be a $\Gamma$-graded ample groupoid. Then $\G$ is strongly graded if and only if $A_R(\G)$ is strongly graded. 
\end{theorem}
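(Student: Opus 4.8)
The plan is to prove both implications element-wise, leaning on the internal characterisations already in hand. By Lemma~\ref{first-lem}, strong grading of $A_R(\G)$ is equivalent to every graded local unit $\bm{1}_U$ (with $U$ a compact open subset of $\G^{(0)}$) lying in $A_R(\G)_\gamma A_R(\G)_{\gamma^{-1}}$; by Lemma~\ref{stg}, strong grading of $\G$ is equivalent to $\cod(\G_\gamma)=\G^{(0)}$ for every $\gamma$. The bridge between the two settings is the single identity $\bm{1}_B * \bm{1}_{B^{-1}} = \bm{1}_{BB^{-1}} = \bm{1}_{\cod(B)}$, valid for every $B \in B^{\rm co}_\gamma(\G)$; it holds because a bisection is injective on domains, so $BB^{-1}=\cod(B)\subseteq\G^{(0)}$, and we note $\bm{1}_B \in A_R(\G)_\gamma$ while $\bm{1}_{B^{-1}} \in A_R(\G)_{\gamma^{-1}}$.

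For the easier implication ($A_R(\G)$ strongly graded $\Rightarrow$ $\G$ strongly graded), fix $\gamma \in \Gamma$ and $u \in \G^{(0)}$, and choose a compact open neighbourhood $U \subseteq \G^{(0)}$ of $u$. By Lemma~\ref{first-lem}(3) the local unit $\bm{1}_U$ lies in $A_R(\G)_\gamma A_R(\G)_{\gamma^{-1}}$, so expanding on the generators gives $\bm{1}_U = \sum_{i,j} r_{ij}\,\bm{1}_{B_i}*\bm{1}_{C_j} = \sum_{i,j} r_{ij}\,\bm{1}_{B_iC_j}$ with $B_i \in B^{\rm co}_\gamma(\G)$ and $C_j \in B^{\rm co}_{\gamma^{-1}}(\G)$. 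Evaluating at $u$ yields $1 = \bm{1}_U(u) = \sum_{i,j} r_{ij}\,\bm{1}_{B_iC_j}(u)$, so $u \in B_iC_j$ for some $i,j$; writing $u = bc$ with $b \in B_i \subseteq \G_\gamma$, the fact that $u$ is a unit forces $\cod(b)=u$. Hence $u \in \cod(\G_\gamma)$, and as $u$ was arbitrary, $\cod(\G_\gamma)=\G^{(0)}$, so Lemma~\ref{stg} finishes this direction.

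For the converse ($\G$ strongly graded $\Rightarrow$ $A_R(\G)$ strongly graded), I use that $\cod(\G_\gamma)=\G^{(0)}$ means the sets $\cod(B)$, as $B$ ranges over $B^{\rm co}_\gamma(\G)$, form an open cover of $\G^{(0)}$. For any compact open $V \subseteq \cod(B)$ the restriction $B' = \{b \in B \mid \cod(b) \in V\}$ is again a compact open $\gamma$-homogeneous bisection with $\cod(B')=V$, so $\bm{1}_V = \bm{1}_{B'}*\bm{1}_{(B')^{-1}} \in A_R(\G)_\gamma A_R(\G)_{\gamma^{-1}}$. It remains to express an arbitrary graded local unit $\bm{1}_U$ in terms of such pieces, and this is where the main work lies: I cover the compact set $U$ by finitely many $\cod(B_1),\dots,\cod(B_n)$, put $W_k = U \cap \cod(B_k)$, and use that the compact open subsets of $\G^{(0)}$ form a generalised Boolean ring. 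Inclusion--exclusion then writes $\bm{1}_U$ as a finite $\ZZ$-linear combination of the characteristic functions $\bm{1}_W$ of the nonempty intersections $W = \bigcap_{k \in S} W_k$ (over $S \subseteq \{1,\dots,n\}$), each of which is compact open and contained in some $\cod(B_k)$, hence in $A_R(\G)_\gamma A_R(\G)_{\gamma^{-1}}$ by the previous paragraph. Since $A_R(\G)_\gamma A_R(\G)_{\gamma^{-1}}$ is an additive subgroup, $\bm{1}_U$ lies in it, and Lemma~\ref{first-lem} gives that $A_R(\G)$ is strongly graded. The only delicate point is this disjointification/Boolean-ring step; everything else is the formal translation furnished by the identity above.
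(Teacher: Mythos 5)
Your proof is correct, and in the hard direction it takes a genuinely different route from the paper's. The paper verifies $A_R(\G)_{\gamma\delta}\subseteq A_R(\G)_{\gamma}*A_R(\G)_{\delta}$ directly for every pair $(\gamma,\delta)$: given $U\in B^{\rm co}_{\gamma\delta}(\G)$, each $y\in U$ is factored as $y=pq$ with $p\in\G_\gamma$ and $q\in\G_\delta$, continuity of multiplication yields homogeneous bisections with $y \in V_yW_y\subseteq U$, compactness gives a finite cover, inclusion--exclusion is applied, and each resulting term is pushed into $A_R(\G)_\gamma * A_R(\G)_\delta$ by the factor-out trick $\bm{1}_{\cap_i V_iW_i}=\bm{1}_{(\cap_i V_iW_i)W_{i_1}^{-1}}*\bm{1}_{W_{i_1}}$, which uses $W_{i_1}^{-1}W_{i_1}=\dom(W_{i_1})$. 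You instead reduce both implications to unit-space data, via Lemma~\ref{first-lem} (it suffices to place the graded local units $\bm{1}_U$, with $U\subseteq\G^{(0)}$ compact open, inside $A_R(\G)_\gamma A_R(\G)_{\gamma^{-1}}$) and Lemma~\ref{stg}~(\ref{it4:stg}) (strong grading of $\G$ amounts to $\cod(\G_\gamma)=\G^{(0)}$); then each covering piece factors \emph{exactly}, $\bm{1}_V=\bm{1}_{B'}*\bm{1}_{(B')^{-1}}$, so you need neither the continuity-of-multiplication step nor the factor-out trick, only the covering of $U$ by the sets $\cod(B)$ plus inclusion--exclusion (and Hausdorffness of $\G^{(0)}$ makes all the intersections compact open, as you note). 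What your route buys is a cleaner factorisation and the feature that only one group element $\gamma$ appears at a time; the cost is that it leans on Lemma~\ref{first-lem} (which the paper states without proof) and on the identification of the graded local units of $A_R(\G)$ with the functions $\bm{1}_U$ (quoted from \cite{CEP}). Your easy direction is essentially the paper's contrapositive support argument repackaged: the paper exhibits $g\in\G_{\gamma\delta}\setminus\G_\gamma\G_\delta$ and notes that every element of $A_R(\G)_\gamma * A_R(\G)_\delta$ vanishes at $g$; you make the same support observation but may test only at units $u\in\G^{(0)}$, precisely because Lemma~\ref{stg} reduces strong grading to a statement about codomains. (Both your argument and the paper's tacitly assume $1\neq 0$ in $R$ at the evaluation step, so this is a shared convention rather than a gap.)
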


\begin{proof}
	($\Rightarrow$) Assume $\G$ is strongly $\Gamma$-graded. Fix $\gamma, \delta \in \Gamma$, and suppose $U \in B_{\gamma \delta}^{\rm co}(\G)$. We claim it is possible to write $\bm{1}_U = \sum_j r_j (f_j * g_j)$,  where $r_j \in R$, $f_j \in A_R(\G)_{\gamma}$, and $g_j \in A_R(\G)_{\delta}$ for all $j$. Fix some $y \in U$. Since $\G$ is strongly $\Gamma$-graded, there exist morphisms $p \in \G_{\gamma}$ and $q \in \G_{\delta}$ such that $y = pq$. From the continuity of groupoid multiplication, there are compact open bisections $V_y \in B_\gamma^{\rm co}(\G)$, containing $p$, and $W_y \in B_\delta^{\rm co}(\G)$, containing $q$, such that $y = pq \in V_y W_y \subseteq U$.
	Therefore,
	$U = \bigcup_{y \in U} V_y W_y$ and it can reduce to a finite union $U = \bigcup_{i = 1}^N V_i W_i$ because $U$ is compact.
	The principle of inclusion-exclusion yields
	\begin{equation*}
	\bm{1}_U =	\sum_{j = 1}^N (-1)^{j-1} \sum_{\substack{I \subseteq \{1, \dots, N\} \\ |I|=j}}\bm{1}_{\cap_{i \in I}V_i W_i}.
	\end{equation*}
	Examining the terms,
	\begin{align*}
	\bm{1}_{\cap_{i \in I}V_i W_i}
	= \bm{1}_{V_{i_1}W_{i_1} \cap \dots \cap V_{i_j}W_{i_j}}
	&= \bm{1}_{(V_{i_1}W_{i_1}  \cap \dots \cap V_{i_j}W_{i_j})W_{i_1}^{-1}W_{i_1}}\\
	&= \bm{1}_{(V_{i_1}W_{i_1} \cap \dots \cap V_{i_j}W_{i_j})W_{i_1}^{-1}}*\bm{1}_{W_{i_1}} \in A_R(\G)_\gamma \!*\! A_R(\G)_\delta.
	\end{align*}
	The above calculation uses the fact that $W_{i_1}$ is a bisection, which implies $W_{i_1}^{-1} W_{i_1} = \dom(W_{i_1})$. Therefore, $\bm{1}_U \in A_R(\G)_\gamma\!*\! A_R(\G)_\delta$ as claimed. Since the functions $\{\bm{1}_U \mid U \in B_{\gamma \delta}^{\rm co}(\G)\}$ span $A_R(\G)_{\gamma \delta}$, it follows that $A_R(\G)_{\gamma\delta} \subseteq A_R(\G)_\gamma \!* \! A_R(\G)_\delta$ and therefore $A_R(\G)$ is strongly graded.
	
	($\Leftarrow$) Suppose $\G$ is not strongly graded. Then there exists a pair $\gamma, \delta \in \Gamma$ and some $g \in \G_{\gamma \delta}$ such that $g \notin \G_\gamma \G_\delta$.
	Let $W \in B^{\rm co}_{\gamma \delta}(\G)$ be a neighbourhood of $g$, so $\bm{1}_W \in A_R(\G)_{\gamma\delta}$ and $\bm{1}_W(g) = 1$. It is straightforward to check that
	\[
	A_R(\G)_\gamma \!*\! A_R(\G)_\delta = \Span_R\{\bm{1}_{U}\!*\! \bm{1}_{V} \mid U \in B_{\gamma}^{\rm co}(\G),\ V \in B_{\delta}^{\rm co}(\G) \}.
	\]
	If it were true that $\bm{1}_{W} \in A_R(\G)_\gamma \!*\! A_R(\G)_\delta$ then it would be possible to write
	\[
	\bm{1}_{W} = \sum_{j  = 1}^n r_j (\bm{1}_{U_j}\!*\!\bm{1}_{V_j}) = \sum_{j = 1}^n r_j \bm{1}_{U_jV_j}, 
	\]
	where each $U_j \subseteq \G_\gamma$ and $V_j \subseteq \G_\delta$. In particular, this would require for at least one $j$ that $\bm{1}_{U_jV_j}(g) \ne 0$, which would require $g \in U_j V_j \subseteq \G_\gamma \G_\delta$. This is a contradiction. Therefore, $A_R(\G)$ is not strongly $\Gamma$-graded.
\end{proof}

In fact, using a diagram of functors, Theorem \ref{strong} can be derived from Theorem \ref{strgro} and vice versa. To this end, we prove a graded version of Steinberg's Equivalence Theorem from \cite{steinberg2}.

\begin{lemma} \label{works1}
If $\G$ is a $\Gamma$-graded ample groupoid and $M$ is a graded right $A_R(\G)$-module, then $\operatorname{Sh}(M)$ is a graded $\G$-sheaf of $R$-modules. Moreover, $\operatorname{Sh}: \Gr A_R(\G) \to  \Gr_R\G$ is a functor.
\end{lemma}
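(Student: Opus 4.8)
The plan is to carry the $\Gamma$-grading of $M$ through the construction of $\operatorname{Sh}(M) = (\widetilde M, p_M)$, verify conditions (C1)--(C3) of Definition~\ref{G-sheaf of R-mods}, and then check that graded module maps induce graded sheaf morphisms. The observation that drives everything is that each $\bm 1_U$ with $U \in B^{\rm co}(\G^{(0)})$ is homogeneous of degree $\ep$. Consequently $M_\gamma \bm 1_U \subseteq M_\gamma$, so the presheaf $U \mapsto M(U) = M\bm 1_U$ of~(\ref{MU}) is in fact a presheaf of \emph{graded} $R$-modules, with $M(U) = \bigoplus_{\gamma} M_\gamma \bm 1_U$, and its connecting maps $\rho^U_V$ (right multiplication by $\bm 1_V$, again of degree $\ep$) are degree-preserving. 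Since direct limits commute with direct sums, each stalk inherits a decomposition $\widetilde M_x = \bigoplus_{\gamma \in \Gamma}(\widetilde M_x)_\gamma$, where $(\widetilde M_x)_\gamma := \{[m]_x \mid m \in M_\gamma\}$ is the image of $M_\gamma$ in the direct limit; this is~(C1).

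For (C2) I would show that the homogeneous component $\widetilde M_\gamma := \bigcup_{x \in \G^{(0)}}(\widetilde M_x)_\gamma$ equals the union of all basic open sets $(U,m)$ of~(\ref{Um}) with $m \in M_\gamma$. On one hand, $(U,m) \subseteq \widetilde M_\gamma$ whenever $m \in M_\gamma$. On the other hand, any point $[m]_x \in (\widetilde M_x)_\gamma$ can be written as $[m']_x$ with $m' \in M_\gamma$, and choosing a local unit $\bm 1_U$ with $m' \bm 1_U = m'$ (so $x \in U$) gives $[m]_x \in (U, m') \subseteq \widetilde M_\gamma$; hence $\widetilde M_\gamma$ is open. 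For (C3) I would exploit that the homogeneous compact open bisections form a basis: given $[m]_x \in (\widetilde M_x)_\gamma$, which I may represent with $m \in M_\gamma$, and $g \in \G_\delta$ with $\cod(g) = x$, I choose $B \in B^{\rm co}_\delta(\G)$ containing $g$ and compute $[m]_x g = [m\bm 1_B]_{\dom(g)}$. Since $m\bm 1_B \in M_\gamma A_R(\G)_\delta \subseteq M_{\gamma\delta}$, this lies in $(\widetilde M_{\dom(g)})_{\gamma\delta}$, proving $\widetilde M_\gamma \G_\delta \subseteq \widetilde M_{\gamma\delta}$. The action is independent of the choice of $B$, so using a homogeneous one is legitimate.

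For the functoriality statement, I would recall that $\operatorname{Sh}$ is already a functor $\Modd A_R(\G) \to \Modd_R\G$ sending a module map $f: M \to N$ to $\operatorname{Sh}(f): [m]_x \mapsto [f(m)]_x$. When $f$ is a graded homomorphism, $f(M_\gamma) \subseteq N_\gamma$ forces $\operatorname{Sh}(f)\big((\widetilde M_x)_\gamma\big) \subseteq (\widetilde N_x)_\gamma$, so $\operatorname{Sh}(f)$ is a graded morphism of $\G$-sheaves; since composition and identities are already respected, $\operatorname{Sh}$ restricts to a functor $\Gr A_R(\G) \to \Gr_R\G$. I expect the main obstacle to be (C2): the grading on the stalks is produced abstractly (direct limits commuting with coproducts), and the real work lies in reconciling this algebraic decomposition with the topology of $\widetilde M$, that is, in exhibiting each homogeneous component as an open subset via the basis~(\ref{Um}). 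The remaining verifications are essentially formal.
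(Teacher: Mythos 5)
Your proof is correct and takes essentially the same route as the paper's: grade the presheaf $U \mapsto M\bm{1}_U$ of (\ref{MU}), pass the grading through the direct limits to obtain (C1), exhibit $\widetilde{M}_\gamma$ as a union of basic open sets $(U,m)$ of (\ref{Um}) with $m$ homogeneous of degree $\gamma$ for (C2), use homogeneous compact open bisections for (C3), and note that functoriality is formal. The only slip is the parenthetical deduction in your (C2) argument --- $m'\bm{1}_U = m'$ does not by itself force $x \in U$ --- but this is repaired exactly as in the paper by instead fixing any $U \in B^{\rm co}(\G^{(0)})$ containing $x$ and replacing $m'$ by $m'\bm{1}_U \in M(U)_\gamma$, so that $[m]_x = [m'\bm{1}_U]_x \in (U, m'\bm{1}_U) \subseteq \widetilde{M}_\gamma$.
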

\begin{proof}
Since $M$ is a graded right $A_R(\G)$-module, it is also a graded $R$-module (where $R = R_\ep$ has the trivial grading) and the presheaf $M$ defined in (\ref{MU}) takes values in the category of graded $R$-modules. Concretely, $M(U)  = \bigoplus_{\gamma \in \Gamma} M(U)_\gamma = \bigoplus_{\gamma \in \Gamma} M_\gamma \bm{1}_U$, for all $U \in B^{\rm co}(\G^{(0)})$. The connecting  homomorphisms $\rho^U_V: M(U) \to M(V)$, $m \mapsto m\bm{1}_V$, are graded homomorphisms. The direct limit $\widetilde{M}_x = \lim_{\substack{\longrightarrow\\x \in U}} M(U)$ is therefore a graded $R$-module, for every $x \in \G^{(0)}$, so $\operatorname{Sh}(M) = \widetilde{M} = \bigcup_{x \in \G^{(0)}}\widetilde{M}_x$ is a sheaf of graded $R$-modules over $\G^{(0)}$. Moreover, for $g \in {^y\G^x_\delta}$ we can pick $B \in B^{\rm co}_\delta(\G)$ containing $g$ and conclude that $[m]_{y} \in (\widetilde{M}_{y})_\gamma$ implies $[m]_{y}g = [m\bm{1}_B]_{x} \in  (\widetilde{M}_{x})_{\gamma \delta}$. This shows $\widetilde{M}_\gamma \G_\delta \subseteq \widetilde{M}_{\gamma \delta}$. Finally, $\widetilde{M}_\gamma = \bigcup_{x \in \G^{(0)}}(\widetilde{M}_x)_\gamma$ is open: for $[m]_x \in (\widetilde{M}_x)_\gamma$ we can assume $m \in M(U)_\gamma$ for some $U$ containing $x$, and thus the open set $(U,m)$ (see (\ref{Um})) has $[m]_x \in (U,m) \subseteq \widetilde{M}_\gamma$.
If $f: M \to N$ is a graded homomorphism of $A_R(\G)$-modules, then clearly $\operatorname{Sh}(f): [m]_x \mapsto [f(m)]_x$ is a graded morphism of $\G$-sheaves. This establishes the functoriality.
\end{proof}

\begin{lemma} \label{works2}
If $\G$ is a $\Gamma$-graded ample groupoid and $(E,p)$ is a graded $\G$-sheaf of $R$-modules, then $\Gamma_c(E)$ is a graded $A_R(\G)$-module. Moreover, $\Gamma_c: \Gr _R\G \to \Gr A_R(\G)$ is a functor.
\end{lemma}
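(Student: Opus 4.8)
The plan is to equip $\Gamma_c(E)$ with the grading in which a section is homogeneous of degree $\gamma$ precisely when its image lies in the homogeneous component $E_\gamma$. Concretely, for $s \in \Gamma_c(E)$ and $\gamma \in \Gamma$, I would set $s_\gamma := \pi_\gamma \circ s$, where $\pi_\gamma : E \to E_\gamma$ is the projection from Lemma~\ref{projection-lem}, and define $\Gamma_c(E)_\gamma := \{s \in \Gamma_c(E) \mid s = s_\gamma\}$. First I would check that each $s_\gamma$ is again an element of $\Gamma_c(E)$: it is continuous because $\pi_\gamma$ is continuous (Lemma~\ref{projection-lem}) and $s$ is continuous, and it is compactly supported because $\supp(s_\gamma) \subseteq \supp(s)$ is a closed subset of a compact subset of the Hausdorff space $\G^{(0)}$.

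The crux of the argument, and the step I expect to be the main obstacle, is showing that $s = \sum_{\gamma \in \Gamma} s_\gamma$ is a \emph{finite} sum, so that $\Gamma_c(E) = \bigoplus_{\gamma \in \Gamma}\Gamma_c(E)_\gamma$ really is a direct-sum decomposition. Pointwise finiteness is automatic from (C1), but one must rule out infinitely many degrees accumulating across the support. To do this I would fix $s$ and, for each $x \in \supp(s)$, let $D_x = \{\gamma \mid s_\gamma(x) \ne 0_x\}$ be the (finite) set of degrees occurring in $s(x) \in E_x = \bigoplus_{\gamma} (E_x)_\gamma$. The finite sum $\sigma_x := \sum_{\gamma \in D_x} s_\gamma$ is a continuous section agreeing with $s$ at $x$. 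Since the zero section $Z(\G^{(0)})$ is open in $E$ (this is condition (A1)), the preimage $(s - \sigma_x)^{-1}\big(Z(\G^{(0)})\big)$ is an open neighbourhood of $x$ on which $s$ and $\sigma_x$ coincide; in particular $s_\delta$ vanishes there for every $\delta \notin D_x$. Covering the compact set $\supp(s)$ by finitely many such neighbourhoods shows that only finitely many $s_\gamma$ are nonzero. Directness of the sum follows at once by evaluating at each stalk and invoking (C1).

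It then remains to verify compatibility with the module structure and functoriality, both of which are routine. For $s \in \Gamma_c(E)_\gamma$ and $f \in A_R(\G)_\delta$, the action formula~(\ref{ARG-mod}) gives $(sf)(x) = \sum_{g \in \G^x} f(g)\, s(\cod(g))\, g$, where only $g \in \G^x_\delta$ contribute since $\supp(f) \subseteq \G_\delta$; for each such $g$ we have $s(\cod(g)) \in E_\gamma$, so $s(\cod(g))g \in E_\gamma \G_\delta \subseteq E_{\gamma\delta}$ by (C3), and this element lies in the stalk $E_{\dom(g)} = E_x$. Hence $(sf)(x) \in (E_x)_{\gamma\delta}$ for all $x$, giving $\Gamma_c(E)_\gamma A_R(\G)_\delta \subseteq \Gamma_c(E)_{\gamma\delta}$, so $\Gamma_c(E)$ is a graded $A_R(\G)$-module. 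Finally, for a graded morphism $\phi: E \to F$, the induced map $\Gamma_c(\phi): s \mapsto \phi \circ s$ is already known to be an $A_R(\G)$-module homomorphism by Steinberg's Equivalence Theorem; since $\phi$ restricts to graded maps $(E_x)_\gamma \to (F_x)_\gamma$ on each stalk, $\Gamma_c(\phi)$ carries $\Gamma_c(E)_\gamma$ into $\Gamma_c(F)_\gamma$, and preservation of composition and identities is inherited from the ungraded functor. This establishes that $\Gamma_c : \Gr_R\G \to \Gr A_R(\G)$ is a functor.
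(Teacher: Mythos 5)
Your proposal is correct, and it sets up the same grading as the paper: $\Gamma_c(E)_\gamma$ consists of the compactly supported continuous sections with image in $E_\gamma$, obtained by post-composing with the projections of Lemma~\ref{projection-lem}, and your verification of module compatibility via (C3) and (\ref{ARG-mod}), as well as functoriality, matches the paper's. Where you genuinely diverge is at the crux you yourself flagged: proving that only finitely many $s_\gamma$ are nonzero. The paper argues inside the total space $E$: since addition is an open map, each finite sum $E_{\alpha_1}+\cdots+E_{\alpha_n}$ is open, the collection of all such sets covers $E$, and the compact set $s(\supp(s))$ lies in finitely many of them, hence in a single such finite sum. You argue instead inside the base space $\G^{(0)}$: around each $x \in \supp(s)$ you produce, via openness of the zero section (the reformulation of (A1)) and continuity of subtraction of sections, an open neighbourhood on which $s$ agrees with the finite partial sum $\sigma_x$, and then extract a finite subcover of the compact set $\supp(s)$. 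Both are valid compactness arguments resting on the openness facts that flow from Lemma~\ref{useful}; the paper's is marginally more economical (one open cover of $E$, no agreement-set argument), while yours has the mild advantage of invoking only the openness of the zero section rather than the openness of $n$-fold addition, and of working entirely over the base space where compactness of $\supp(s)$ is given directly.
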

\begin{proof}
Let $N:= \Gamma_c(E) = \{s: \G^{(0)} \to E \mid s \text{ is a compactly supported continuous section}\}$. 
Define $N_\alpha := \{s \in N \mid s(\G^{(0)}) \subseteq E_\alpha\}$ for all $\alpha \in \Gamma$. We first show that $N \subseteq \sum_{\alpha \in \Gamma}N_\alpha$. Define $s_\alpha: \G^{(0)} \to E$ by $s_\alpha(x) = s(x)_\alpha$ for all $x \in \G^{(0)}$. We claim that only finitely many of the $s_\alpha$ are nonzero. Since addition is an open map, for any finite subset $\{\alpha_1, \dots, \alpha_n\} \subseteq \Gamma$, the set $E_{\alpha_1} + \dots + E_{\alpha_n}$ is open in $E$. The collection of all such sets is an open cover of $E$. The nonzero image of $s$, in other words $s(\supp(s))$, is compact because $s$ is continuous and compactly supported. Therefore, reducing to a finite subcover yields a finite subset $\{\alpha_1, \dots, \alpha_n\} \subseteq \Gamma$ such that $s(\supp(s)) \subseteq E_{\alpha_1} + \dots + E_{\alpha_n}$. This proves that only finitely many of the $s_\alpha$ are nonzero.
Next, we claim that each $s_{\alpha_i} \in N_{\alpha_i}$. Indeed $s_{\alpha_i}$ is continuous because it is just $s$ composed with the projection onto the $\alpha_i$-homogeneous component  (see Lemma \ref{projection-lem}). The support of $s_{\alpha_i}$ is compact because it is closed and contained in the support of $s$. This proves that $N \subseteq \sum_{\alpha \in \Gamma} N_\alpha$ and clearly $N_\alpha \cap N_\beta = 0$ if $\alpha \ne \beta$, so $N = \bigoplus_{\alpha \in \Gamma} N_\alpha$. From the definition (see (\ref{ARG-mod})) it is clear $N_\alpha A_R(\G)_\beta \subseteq N_{\alpha \beta}$ for all $\alpha, \beta \in \Gamma$. To finish, note that when $\phi: E \to F$ is a graded morphism of $\G$-sheaves of $R$-modules, $\Gamma_c(\phi): s \mapsto \phi \circ s$ is a graded homomorphism.
\end{proof}

\begin{proposition}
	Let $\G$ be a $\Gamma$-graded ample groupoid, and $R$ a commutative unital ring. Then $\Gamma_c: \Gr_R \G \rightarrow \Gr A_R(\G)$ and $\operatorname{Sh}:\Gr A_R(\G)\rightarrow \Gr_R \G$
	are mutually inverse equivalences of categories, and the equivalences commute with the shift functors $\mathcal{T}_\alpha: \Gr A_R(\G) \to \Gr A_R(\G)$ and $\mathcal{T}_\alpha: \Gr _R\G \to \Gr _R\G$.
\end{proposition}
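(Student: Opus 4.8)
The plan is to \emph{lift} the ungraded Steinberg Equivalence Theorem (\cite[Theorem 3.5]{steinberg2}) rather than to reprove it. There are faithful forgetful functors $\mathcal U \colon \Gr A_R(\G) \to \Modd A_R(\G)$ and $\mathcal U \colon \Gr_R\G \to \Modd_R\G$, and by construction the graded functors lift the ungraded ones, in the sense that $\mathcal U \circ \operatorname{Sh} = \operatorname{Sh} \circ \mathcal U$ and $\mathcal U \circ \Gamma_c = \Gamma_c \circ \mathcal U$. Lemmas \ref{works1} and \ref{works2} already guarantee that $\operatorname{Sh}$ and $\Gamma_c$ are well-defined functors between the graded categories, so the only thing left is to promote the ungraded natural isomorphisms $\operatorname{Sh}\circ\Gamma_c \cong \id_{\Modd_R\G}$ and $\Gamma_c\circ\operatorname{Sh} \cong \id_{\Modd A_R(\G)}$ to the graded setting by checking that the unit and counit of Steinberg's equivalence are graded.

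First I would examine the counit $\varepsilon_E \colon \operatorname{Sh}(\Gamma_c(E)) \to E$ given by $[s]_x \mapsto s(x)$. If $[s]_x$ lies in the $\gamma$-component of $\operatorname{Sh}(\Gamma_c(E))$, then by the description of the grading established in Lemma \ref{works1} we may take the representative $s \in \Gamma_c(E)_\gamma$, and by the definition of $\Gamma_c(E)_\gamma$ in Lemma \ref{works2} this forces $s(x) \in (E_x)_\gamma$; hence $\varepsilon_E$ carries the $\gamma$-component into the $\gamma$-component. Dually, I would examine the unit $\upsilon_M \colon M \to \Gamma_c(\operatorname{Sh}(M))$ sending a homogeneous $m \in M_\gamma$ to the section $x \mapsto [m]_x$. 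The computation in Lemma \ref{works1} gives $[m]_x \in (\widetilde M_x)_\gamma$ for every $x$, so this section lands in $\operatorname{Sh}(M)_\gamma$ and defines an element of $\Gamma_c(\operatorname{Sh}(M))_\gamma$. Thus $\upsilon_M$ is graded as well.

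To conclude, I would invoke the elementary fact that a graded homomorphism of graded modules (respectively, a graded morphism of graded $\G$-sheaves) that is bijective on underlying objects is automatically a graded isomorphism: degree-preservation together with injectivity forces the restriction to each homogeneous component to be a bijection, so the inverse is again graded. Applying this to $\varepsilon_E$ and $\upsilon_M$, which are isomorphisms of underlying objects by Steinberg's theorem, shows they are graded isomorphisms. Naturality in the graded categories is inherited from naturality in the ungraded categories: each naturality square consists of graded morphisms and already commutes after applying $\mathcal U$, which is faithful, so it commutes in the graded category. Hence $\Gamma_c$ and $\operatorname{Sh}$ are mutually inverse equivalences between $\Gr_R\G$ and $\Gr A_R(\G)$.

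It remains to check compatibility with the shift functors, and this step is purely formal. Unwinding (\ref{shift-sheaf}) gives $E(\alpha)_\gamma = E_{\alpha\gamma}$, whence $\Gamma_c(E(\alpha))_\gamma = \{s \mid s(\G^{(0)}) \subseteq E_{\alpha\gamma}\} = \Gamma_c(E)_{\alpha\gamma} = (\Gamma_c(E)(\alpha))_\gamma$; that is, $\Gamma_c \circ \mathcal T_\alpha = \mathcal T_\alpha \circ \Gamma_c$ on the nose. Likewise, using (\ref{eq:M-shifted}), the $\gamma$-component of the stalk $\operatorname{Sh}(M(\alpha))_x$ is the direct limit of the modules $M(\alpha)_\gamma \bm 1_U = M_{\alpha\gamma}\bm 1_U$, which equals $(\operatorname{Sh}(M)_x)_{\alpha\gamma}$, giving $\operatorname{Sh} \circ \mathcal T_\alpha = \mathcal T_\alpha \circ \operatorname{Sh}$. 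The only place demanding genuine care is the degree-tracking for $\varepsilon_E$ and $\upsilon_M$, but the required bookkeeping was already carried out in Lemmas \ref{works1} and \ref{works2}, so no real obstacle remains.
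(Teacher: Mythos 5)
Your proposal is correct and follows essentially the same route as the paper: both rely on Lemmas \ref{works1} and \ref{works2} to get well-defined graded functors, then promote Steinberg's ungraded equivalence by observing that its natural isomorphisms are graded, and verify shift-compatibility by direct unwinding of the definitions. The only difference is that you spell out the degree-tracking for the unit and counit explicitly, which the paper compresses into the remark that ``the natural transformations involved in that theorem are indeed graded isomorphisms.''
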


\begin{proof}
	With the help of Lemmas \ref{works1} and \ref{works2}, it follows from \cite[Theorem 3.5]{steinberg2} that $\Gamma_c \circ \operatorname{Sh} \cong \id_{\Gr A_R(\G)}$ and $\operatorname{Sh} \circ \Gamma_c \cong \id_{\Gr _R(\G)}$, since the natural transformations involved in that theorem are indeed graded isomorphisms. If $E$ is a graded $\G$-sheaf and $\alpha \in \Gamma$, it is immediate that $\Gamma_c(E(\alpha)) = \Gamma_c(E)(\alpha)$. Similarly, if $M$ is a graded $A_R(\G)$-module then $\operatorname{Sh}(M(\alpha)) = \operatorname{Sh}(M)(\alpha)$.
\end{proof}

From this result, we can prove Theorem \ref{strgro} using Theorem \ref{strong} or the other way around (so the theorems are equivalent, and only one of the proofs is really necessary). The key lies in the following commutative diagram: 
\begin{equation} \label{comm diag}
\xymatrix{ 
	\Gr_{R}\G\ar[r]^{\mathcal I}\ar@<-0.5ex>[d]_{\Gamma_c}&\Modd_{R}\G_\varepsilon \ar@<-0.5ex>[d]_{\Gamma_c} \ar[r]^{\mathcal J} & \Gr_{R}\G \ar@<-0.5ex>[d]_{\Gamma_c}\\
	\Gr A_{R}(\G)\ar[r]^{\mathcal I} \ar@<-0.5ex>[u]_{\operatorname{Sh}}&\Modd A_{R}(\G)_\ep \ar@<-0.5ex>[u]_{\operatorname{Sh}} \ar[r]^{\mathcal J}& \Gr A_{R}(\G) \ar@<-0.5ex>[u]_{\operatorname{Sh}}.}
\end{equation}
Lemma \ref{iso1} proves that the square on the right commutes (since the isomorphism from $\mathcal{J}$ to $\operatorname{Sh} \circ \mathcal{J} \circ \Gamma_c$ is natural and graded). It is easy to prove that the square on the left commutes.

\begin{proof}[Proof of Theorem \ref{strong} using Theorem \ref{strgro}]
	If $\G$ is a strongly graded groupoid, by Theorem~\ref{strgro}, $\mathcal I \mathcal J\cong \id_{\Modd_R\G_\varepsilon}$ and $\mathcal J \mathcal I \cong \id_{\Gr_R\G}$ (on the top row). Since the vertical arrows are equivalences, this implies that $\mathcal I \mathcal J\cong \id_{\Modd A_{R}(\G_\varepsilon)}$ and $\mathcal J \mathcal I \cong \id_{\Gr A_{R}(\G)}$  (on the bottom row). Now by Theorem~\ref{dadesthm}, $A_R(\G)$ is a strongly graded ring. The converse is similar.
\end{proof}

\begin{proof}[Proof of Theorem \ref{strgro} using Theorem \ref{strong}]
	If $\G$ is a strongly graded groupoid, then by Theorem \ref{strong}, $A_R(\G)$ is a strongly graded algebra. By Theorem \ref{dadesthm}, $\mathcal{IJ} \cong \id_{\Modd A_R(\G)_\ep}$ and $\mathcal{JI} \cong \id_{\Gr A_R(\G)}$. Using the diagram (\ref{comm diag}), this implies $\mathcal I \mathcal J\cong \id_{\Modd_R\G_\varepsilon}$ and $\mathcal J \mathcal I \cong \id_{\Gr_R\G}$ on the top row. Conversely, if $\mathcal I \mathcal J\cong \id_{\Modd_R\G_\varepsilon}$ and $\mathcal J \mathcal I \cong \id_{\Gr_R\G}$, then $\mathcal{IJ} \cong \id_{\Modd A_R(\G)_\ep}$ and $\mathcal{JI} \cong \id_{\Gr A_R(\G)}$, and Theorem \ref{dadesthm} implies $A_R(\G)$ is strongly graded. Thus $\G$ is strongly graded, by Theorem \ref{strong}.
\end{proof}

\begin{corollary}
	Let $\G$ be a $\Gamma$-graded ample groupoid. The following are equivalent:
	\begin{enumerate}[\rm (1)]
		\item $\G$ is strongly graded;
		\item $A_R(\G)$ is strongly graded;
		\item All the arrows in (\ref{comm diag}) are equivalences.
	\end{enumerate}
\end{corollary}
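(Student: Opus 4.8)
The plan is to recognise that this corollary is essentially an organisational repackaging of the three main theorems already established, so the work is bookkeeping rather than computation. First I would observe that the equivalence of (1) and (2) is exactly the statement of Theorem~\ref{strong}, so nothing new is needed for that implication.

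Next I would isolate the distinct roles played by the vertical and horizontal arrows in the diagram~(\ref{comm diag}). The two outer vertical functors $\Gamma_c$ and $\operatorname{Sh}$ between $\Gr_R\G$ and $\Gr A_R(\G)$ are \emph{unconditionally} mutually inverse equivalences, regardless of whether $\G$ is strongly graded: this is precisely the graded Steinberg Equivalence furnished by the Proposition preceding this corollary. Likewise the middle vertical pair is unconditionally an equivalence, being the ordinary Steinberg Equivalence Theorem applied to the ample open subgroupoid $\G_\varepsilon$, together with the identification $A_R(\G_\varepsilon) \cong A_R(\G)_\ep$. Consequently the entire content of statement (3) is carried by the horizontal functors $\mathcal I$ and $\mathcal J$ on the two rows. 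Here I would emphasise the subtlety flagged in the remark after Theorem~\ref{dadesthm}: ``equivalence'' must be understood to mean that $\mathcal I$ and $\mathcal J$ are \emph{mutually inverse}, since an abstract equivalence between the two categories can exist without strong grading.

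For (1)~$\Rightarrow$~(3), I would argue as follows. If $\G$ is strongly graded, then Theorem~\ref{strgro} shows that the top-row functors $\mathcal I, \mathcal J$ are mutually inverse equivalences; and since $A_R(\G)$ is then strongly graded by Theorem~\ref{strong}, Theorem~\ref{dadesthm} shows that the bottom-row functors $\mathcal I, \mathcal J$ are mutually inverse equivalences as well. Combined with the vertical equivalences noted above, every arrow in~(\ref{comm diag}) is then an equivalence. For the converse (3)~$\Rightarrow$~(1), I would simply read off that the top-row horizontal functors are mutually inverse equivalences and invoke Theorem~\ref{strgro} to conclude that $\G$ is strongly graded.

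The main obstacle is conceptual rather than technical: it is correctly interpreting ``all the arrows are equivalences.'' Once one recognises that the vertical arrows are always equivalences and that the meaningful content lives in the horizontal functors being \emph{mutually inverse} (and not merely in the categories being abstractly equivalent), the corollary follows at once by quoting Theorems~\ref{strgro}, \ref{strong}, and~\ref{dadesthm}, with no new constructions or diagram chases beyond the commutativity already recorded for~(\ref{comm diag}).
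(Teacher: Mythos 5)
Your proof is correct and follows essentially the same route as the paper, which offers no separate argument for this corollary precisely because it is the bookkeeping you describe: $(1)\Leftrightarrow(2)$ is Theorem~\ref{strong}, the vertical arrows are unconditional equivalences by the graded Steinberg equivalence (and the ungraded one applied to $\G_\varepsilon$ with $A_R(\G_\varepsilon)\cong A_R(\G)_\ep$), and the horizontal arrows are governed by Theorems~\ref{strgro} and~\ref{dadesthm}. Your interpretive worry about ``equivalence'' versus ``mutually inverse'' is legitimate but resolves itself: since $\mathcal{I}\mathcal{J}\cong\id$ holds unconditionally (see (\ref{hgy4nd1}) and (\ref{mapI})), if each of $\mathcal{I}$ and $\mathcal{J}$ is an equivalence then $\mathcal{J}$ is automatically a quasi-inverse of $\mathcal{I}$, so the two readings of statement (3) coincide.
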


\section{Applications} \label{applications}

In this section, we use the previous results (in fact, we only use Lemma \ref{stg} and Theorem \ref{strong}) to characterise strongly $\ZZ$-graded and $\ZZ/n\ZZ$-graded Leavitt path algebras, and strongly $\ZZ^k$-graded Kumjian-Pask algebras. At the end, we briefly discuss strongly graded transformation groupoids.

\subsection{Leavitt path algebras} \label{LPA} Leavitt path algebras are $\ZZ$-graded $R$-algebras presented by generators and relations that are determined by a directed graph. For every graph $E$, there is a $\ZZ$-graded ample groupoid $\G_E$ such that $A_R(\G_E)$ is graded isomorphic to the Leavitt path algebra of $E$ with coefficients in $R$ (see \cite[Example 3.2]{CS}). 

\subsubsection{Preliminaries}
We refer to \cite[Definitions 1.2.2]{LPAbook} for the standard terminology used to describe a \textit{directed graph} $E = (E^0, E^1, r, s)$. A vertex is
a \textit{sink} if it emits no edges. A vertex is an \textit{infinite emitter} if it emits infinitely many edges, and graph is \textit{row-finite} if it has no infinite emitters. Sinks and infinite emitters are collectively called \textit{singular vertices}. A vertex that neither receives nor emits any edges is called \textit{isolated}.
In this section, we adopt the convention from \cite{LPAbook} that a path is a sequence of edges $\alpha = \alpha_1 \dots \alpha_n$ where the range of $\alpha_i$ coincides with the source of $\alpha_{i+1}$, i.e., $r(\alpha_i) = s(\alpha_{i+1})$, for all $1 \le i \le n-1$. We write $|\alpha| = n \in \NN\cup \{\infty\}$ for the \textit{length} of the path. We use the notation $E^\star$ for the set of finite paths, and $E^\infty$ for the set of infinite paths. We define the set of \textit{boundary paths} as $\partial E:= E^\infty \cup \{\alpha \in E^\star \mid r(\alpha) \text{ is singular}\}$. If $\alpha \in E^\star$, $p \in E^\star \cup E^\infty$, and  $p = \alpha q$ for some $q \in E^\star \cup E^\infty$, then we say that $\alpha$ is an \textit{initial subpath} of $p$. In this section, we make no restrictions on the cardinality of $E^0$ or $E^1$.

Let $E$ be a graph and let $A$ be a ring. A subset $\{v, e, e^* \mid v \in E^0, e \in E^1 \} \subseteq A$ is called a \textit{Leavitt $E$-family} if the elements of 
$\{v\mid  v \in E^0\}$ are pairwise orthogonal idempotents and the following conditions are satisfied:
\begin{enumerate}[(E1)]
	\item $s(e) e = e r(e) = e$ for all $e \in E^1$,
	
	\smallskip

	\item $e^*s(e) = r(e)e^* = e^*$ for all $e \in E^1$,
\end{enumerate}
\begin{enumerate}[(CK1)]
	\item $e^* f = \delta_{e,f} r(e)$ for all $e, f \in E^1$, and
	
	\smallskip

	\item $v = \sum_{e \in s^{-1}(v)} e e^*$ for every regular vertex $v \in E^0$.
\end{enumerate}

As usual, let $R$ be a commutative ring with unit. The \textit{Leavitt path algebra with coefficients in $R$}, 
which we denote by $L_R(E)$, is the universal $R$-algebra generated by a Leavitt
$E$-family.
Leavitt path algebras have a canonical $\mathbb{Z}$-graded structure with homogeneous components
\[L_R(E)_n = \{\alpha \beta^* \mid  r(\alpha) = r(\beta),\ n = |\alpha|-|\beta| \}.\]

We now describe the boundary path groupoid, which was introduced in \cite{KPRR}. Let $E$ be a graph, and define the \textit{one-sided shift map} $\sigma: \partial E \setminus E^0 \to \partial E$ as follows: 
\[
\sigma(p) := 
\begin{cases}
r(p) &\text{ if } p \in E^\star \cap \partial E \text{ and } |p|=1 \\
p_2 \dots p_{|p|} &\text{ if } p \in E^\star \cap \partial E \text{ and } |p| \ge 2\\
p_2 p_3 \dots &\text{ if } p \in E^\infty
\end{cases}
\]
The $n$-fold composition $\sigma^n$ is defined on 
paths of length $\ge n$ and we understand that $\sigma^0: \partial E \to \partial E$ is the identity map.
The \textit{boundary path groupoid} is the groupoid
\begin{align*}
\G_E :&= \left\{(x, k, y) \in \partial E \times \mathbb{Z} \times \partial E \mid  \sigma^n(x) = \sigma^{n - k}(y) \text{ for some } n \ge \max\{0,k\}\right\}\\\
&= \left\{(\alpha x, |\alpha| - |\beta|, \beta x) \mid  \alpha, \beta \in E^\star,\ x \in \partial E,\ r(\alpha)= r(\beta) = s(x) \right\}, 
\end{align*}
with domain, codomain, multiplication, and inversion maps:
\begin{align*}
\dom(x,k,y) = y, && \cod(x,k,y) = x, &&
(x, k , y)(y, l,z)
= (x, k + l, z), && (x, k, y)^{-1}=(y, -k, x).
\end{align*}
The unit space is
$\G_E^{(0)} = \{(x, 0,x) \mid  x \in \partial E \}$, which we identify with $\partial E$. 
The groupoid $\G_E$ comes with a canonical $\mathbb{Z}$-grading given by the 
functor $\varphi:(p,k,q) \mapsto k$. Since we do not need to work with the topology on $\G_E$, it suffices to say that there is a topology with respect to which $\G_E$ is a $\ZZ$-graded ample groupoid. There is a graded isomorphism $\pi_E: L_R(E)\to A_R(\G_E)$, but we do not need to use it explicitly, so we refer the reader to \cite[Example 3.2]{CS}.

\subsubsection{Strongly graded Leavitt path algebras}

\begin{definition} \label{Y}
	A graph $E$ satisfies \textit{Condition (Y)} if for every $k \in \mathbb{N}$ 
	and every infinite path $p$, there exists an initial subpath $\alpha$ of $p$ and a finite path $\beta$ such that $r(\beta) = r(\alpha)$ and $|\beta| - |\alpha| = k$.
\end{definition}

If $E$ is a graph such that every infinite path contains a vertex that is the base of a cycle, then $E$ satisfies Condition (Y). On the other hand, there exist infinite acyclic graphs that satisfy Condition (Y).

\begin{theorem}
\label{thm:LPA}
	Let $E$ be a graph, and $R$ a unital commutative ring. 
	The Leavitt path algebra $L_R(E)$ is strongly $\mathbb{Z}$-graded if 
	and only if $E$ is row-finite, has no sinks, and satisfies Condition (Y).
\end{theorem}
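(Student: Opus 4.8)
The plan is to transfer the problem first from the algebra to the groupoid, and then from the groupoid to the graph. Since there is a graded isomorphism $\pi_E : L_R(E) \to A_R(\G_E)$, Theorem~\ref{strong} reduces the statement to determining exactly when the boundary path groupoid $\G_E$ is strongly $\mathbb{Z}$-graded. By Lemma~\ref{stg}, this is equivalent to the family of conditions $\dom\big((\G_E)_n\big) = \G_E^{(0)} = \partial E$ for all $n \in \mathbb{Z}$, where $(\G_E)_n = \varphi^{-1}(n)$. Everything after this reduction is a careful translation of these domain conditions into the three graphical hypotheses.

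First I would unwind a single condition $\dom\big((\G_E)_n\big) = \partial E$ using the parametrisation $\G_E = \{(\alpha z,\, |\alpha|-|\beta|,\, \beta z)\}$. A boundary path $y$ lies in $\dom\big((\G_E)_n\big)$ precisely when $y$ factors as $y = \beta z$ with $\beta$ an initial subpath of $y$ and $z = \sigma^{|\beta|}(y) \in \partial E$, and there is a finite path $\alpha$ with $r(\alpha) = r(\beta)$ and $|\alpha| = |\beta| + n$; the element $(\alpha z, n, y)$ then has domain $y$ and degree $n$. Thus $\dom\big((\G_E)_n\big) = \partial E$ says: every $y \in \partial E$ has an initial subpath $\beta$, necessarily of length $\ge \max\{0, -n\}$, whose range $r(\beta)$ is the range of some finite path of length $|\beta| + n$.

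Next I would split on the sign of $n$. For the negative degrees, observe that a singular vertex $v$ is itself a length-$0$ boundary path whose only initial subpath is $v$ itself; at $n = -1$ the displayed condition demands $|\beta| \ge 1$, which is impossible, so $v \notin \dom\big((\G_E)_{-1}\big)$. Hence the single condition at $n = -1$ already forces $E$ to have no singular vertices, i.e.\ to be row-finite and to have no sinks. Conversely, once $E$ has no singular vertices, every boundary path is infinite, so for each $n \le 0$ one takes $\beta$ to be the initial subpath of length $-n$ and $\alpha$ the trivial path at $r(\beta)$, and the condition holds automatically. This shows that the negative-degree conditions together are equivalent to ``row-finite and no sinks''.

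Finally, assuming no singular vertices so that $\partial E = E^\infty$, I would identify the positive-degree conditions with Condition~(Y): for $n > 0$ the requirement is exactly that every infinite path $y$ has an initial subpath $\beta$ and a finite path $\alpha$ with $r(\alpha) = r(\beta)$ and $|\alpha| - |\beta| = n$, which is Definition~\ref{Y} with $k = n$ (after interchanging the names $\alpha$ and $\beta$). Assembling the two sign cases yields the theorem. I expect the only real obstacle to be the bookkeeping in this translation --- in particular, recognising that it is precisely the negative-degree domain conditions that eliminate singular vertices, and checking that the factorisation $y = \beta z$ keeps $z$, and hence $x = \alpha z$, inside $\partial E$ --- rather than any deeper structural difficulty.
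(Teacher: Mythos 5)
Your proposal is correct and takes essentially the same route as the paper: reduce to the boundary path groupoid via the graded isomorphism $L_R(E)\cong A_R(\G_E)$ and Theorem~\ref{strong}, characterise strong grading of $\G_E$ through Lemma~\ref{stg}, and translate the resulting surjectivity conditions into the absence of singular vertices (negative degrees) and Condition~(Y) (positive degrees). The only cosmetic difference is that you apply the domain condition of Lemma~\ref{stg}(\ref{it3:stg}) uniformly in both directions, whereas the paper exhibits non-factorable units for necessity and uses the codomain condition of Lemma~\ref{stg}(\ref{it4:stg}) for sufficiency; these are interchangeable.
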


\begin{proof}
	We prove the statement for $A_R(\G_E)$, since it is graded isomorphic to $L_R(E)$.
	
		($\Rightarrow$) Firstly, suppose $E$ has a singular vertex. 
		Then there is a finite path $\mu \in \partial E$. 
		The morphism $(\mu, 0, \mu) \in \G_E^{(0)}$ cannot be 
		factored in the form $(\mu,|\mu|+1,x)(x,-(|\mu|+1), \mu)$, where 
		$x \in \partial E$, so $(\mu, 0, \mu) \notin {(\G_E)}_{|\mu| + 1} {(\G_E)}_{-(|\mu|+1)}$. 
		Therefore, $\G_E$ is not strongly $\mathbb{Z}$-graded, so neither is $A_R(\G_E)$, 
		according to Theorem \ref{strong}. Secondly, suppose $E$ has no singular vertices, 
		but fails to satisfy Condition (\hyperref[Y]{Y}). This means there is some $k \in \mathbb{N}$, 
		and some infinite path  $p \in E^\infty$, such that for every initial 
		subpath $\alpha$ of $p$, there does not exist a finite path $\beta \in E^\star$ 
		having $r(\beta) = r(\alpha)$ and $|\beta|-|\alpha| = k$. 
		Therefore, the morphism $(p, 0, p) \in \G_E^{(0)}$ does not admit a factoring of the form $(p, 0, p) = (\alpha p', -k, \beta p')(\beta p', k, \alpha p')$. This implies $(p, 0, p) \notin (\G_E)_{-k} (\G_E)_{k}$, so $\G_E$ is not strongly graded, and consequently $A_R(\G_E)$ is not strongly graded.
		
		($\Leftarrow$) Suppose $E$ is row-finite, has no sinks, and satisfies Condition (\hyperref[Y]{Y}). Let $p \in \partial E$ be arbitrary. 
		There are no singular vertices in $E$, so $p$ is an infinite path. For $n\ge0$, we have $(p, n, \sigma^n(p)) \in (\G_E)_n$. For $n < 0$, Condition (\hyperref[Y]{Y}) implies that there exists some initial subpath $\alpha$ of $p$, and a finite path $\beta \in E^\star$ with $r(\beta) = r(\alpha)$ and $|\beta|- |\alpha| = -n$. Then $(p,n,\beta \sigma^{|\alpha|}(p))  \in (\G_E)_n$. Therefore, $p \in \cod((\G_E)_n)$ for every ${n} \in \mathbb{Z}$. By Lemma \ref{stg}~(\ref{it4:stg}), $\G_E$ is strongly graded. By Theorem \ref{strong}, $A_R(\G_E)$ is strongly $\mathbb{Z}$-graded.
\end{proof}

It is also possible to equip $L_R(E)$ with a non-canonical graded structure. One way to do this is to take the quotient grading by a subgroup $n\mathbb{Z} \lhd \mathbb{Z}$. To simplify notation for cosets, let $[k] := k + n\ZZ$.

\begin{proposition}
	Let $E$ be a graph. Then $L_R(E)$ is strongly $\ZZ/n\ZZ$-graded if and only if every singular vertex receives a path of length $n-1$.
\end{proposition}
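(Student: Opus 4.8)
The plan is to reduce the statement about $L_R(E)$ being strongly $\ZZ/n\ZZ$-graded to a purely groupoid-theoretic condition on $\G_E$, via the graded isomorphism $L_R(E) \cong A_R(\G_E)$ together with Theorem~\ref{strong}. The key observation is that the $\ZZ/n\ZZ$-grading on $L_R(E)$ is exactly the quotient grading coming from $n\ZZ \lhd \ZZ$, which on the groupoid side corresponds to the quotient grading $(\G_E)_{[k]} = \bigsqcup_{m \in \ZZ}(\G_E)_{nm + k}$ as in~(\ref{quot grad 2}). So the first step is to record that $L_R(E)$ is strongly $\ZZ/n\ZZ$-graded if and only if $\G_E$ is strongly $\ZZ/n\ZZ$-graded, using Theorem~\ref{strong} applied with the quotient grading.

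Next I would translate the strong-grading condition for the $\ZZ/n\ZZ$-graded groupoid into a path condition using Lemma~\ref{stg}~(\ref{it4:stg}): $\G_E$ is strongly $\ZZ/n\ZZ$-graded if and only if $\cod\big((\G_E)_{[k]}\big) = \G_E^{(0)} = \partial E$ for every coset $[k] \in \ZZ/n\ZZ$. Unwinding the second description of $\G_E$, the element $(\alpha x, |\alpha| - |\beta|, \beta x)$ lies in $(\G_E)_{[k]}$ precisely when $|\alpha| - |\beta| \equiv k \pmod n$, and its codomain is $\alpha x$. Thus $\cod((\G_E)_{[k]}) = \partial E$ means: for every boundary path $p \in \partial E$ there exist $\alpha, \beta \in E^\star$ with $r(\alpha) = r(\beta)$, an initial subpath factorisation $p = \alpha x$, and $|\alpha| - |\beta| \equiv k \pmod n$. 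Since the case $p \in E^\infty$ and $n \geq 0$ is easily arranged by truncating $p$ (one can always adjust $|\alpha|$ freely while keeping $\beta$ comparable), the only obstruction comes from the \emph{singular} boundary paths, i.e.\ finite paths $\mu \in \partial E$ with $r(\mu)$ singular, where the codomain cannot be extended past $r(\mu)$.

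The crux of the argument, and the step I expect to be the main obstacle, is pinning down exactly when a singular vertex $w = r(\mu)$ causes a failure. For such $\mu$, the factorisations $p = \mu = \alpha x$ are constrained: $x$ must be a boundary path based at a vertex reachable along $\mu$, and for the codomain to equal all of $\partial E$ at $\mu$ we need, for each residue $k$, a path $\beta$ with $r(\beta) = r(\alpha)$ realising $|\alpha|-|\beta| \equiv k$. I would argue that the decisive quantity is whether $w$ receives paths of every length modulo $n$, and that this is governed by whether $w$ receives a path of length $n-1$ (equivalently paths of all lengths $0, 1, \dots, n-1$, once one exists of length $n-1$, by taking initial segments and by the fact that $\beta$ can be shortened). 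The forward direction would show that if some singular vertex fails to receive a path of length $n-1$, then the residue class realised by $\beta$'s at $w$ misses some $[k]$, so $\cod((\G_E)_{[k]}) \ne \partial E$ and strong grading fails. The converse would show that if every singular vertex receives a path of length $n-1$, then for the finitely-obstructed singular paths one can always supply a suitable $\beta$ to hit every residue class, while regular/infinite paths are handled as above; hence $\cod((\G_E)_{[k]}) = \partial E$ for all $[k]$ and $\G_E$ is strongly $\ZZ/n\ZZ$-graded.

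I would carry out the two directions by a contrapositive-style case analysis mirroring the proof of Theorem~\ref{thm:LPA}: exhibiting an explicit unit $(\mu, 0, \mu)$ that cannot be factored through $(\G_E)_{[k]}(\G_E)_{[-k]}$ when the length-$(n-1)$ path is absent, and conversely constructing the required factorisations from the hypothesised incoming path. The careful bookkeeping of lengths modulo $n$, and the reduction showing that ``receives a path of length $n-1$'' is equivalent to ``receives incoming paths of all lengths $0, \dots, n-1$,'' is where the real work lies; everything else is a direct application of Lemma~\ref{stg} and Theorem~\ref{strong}.
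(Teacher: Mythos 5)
Your proposal is correct and follows essentially the same route as the paper's proof: reduce via the graded isomorphism and Theorem~\ref{strong} to strong grading of $\G_E$ with the quotient $\ZZ/n\ZZ$-grading, apply Lemma~\ref{stg}, handle infinite (and sufficiently long finite) boundary paths by truncation, and use the hypothesised incoming path of length $n-1$ at a singular vertex to realise every residue class. The only slip is cosmetic: the paths of lengths $0,\dots,n-1$ ending at a singular vertex are obtained by taking \emph{terminal} segments (suffixes) of the incoming path, not initial segments, exactly as in the paper's factorisation $(x,0,x)=\big(x,|x|-k,\sigma^{n-1-k}(\mu)\big)\big(\sigma^{n-1-k}(\mu),k-|x|,x\big)$.
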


\begin{proof}
	($\Leftarrow$) Let $\G = \G_E$ and take $x \in \partial E$. If $x$ is infinite, or if $|x| \ge n-1$, then for $0 \le k \le n-1$, we have $(x, k, \sigma^k(x)) \in \G_{[k]}$ so $x \in \cod({\G}_{[k]})$ for any $[k] \in \mathbb{Z}/n\mathbb{Z}$. Otherwise $0 \le |x| < n-1$ and $r(x)$ is a singular vertex. By assumption, there exists $\mu \in E^\star$ of length $n-1$, such that $r(\mu) = r(x)$. For all $0 \le k \le n-1$ we have
	\begin{equation*}
	(x,0,x)
	= \big(x, |x|-k, \sigma^{n-1-k}(\mu)\big) \big(\sigma^{n-1-k}(\mu), k-|x|, x\big),
	\end{equation*}
	so $x \in \cod(\G_{[|x|-k]})$. Therefore $x \in \cod(\G_{[k]})$ for every $[k] \in \mathbb{Z}/n\mathbb{Z}$, so $\G_E$ is strongly $\ZZ/n\ZZ$-graded, by Lemma \ref{stg}~(\ref{it4:stg}). Conclude that $A_R(\G_E) \cong L_R(E)$ is strongly $\mathbb{Z}/n\mathbb{Z}$-graded.
	
	($\Rightarrow$) If $v \in E^0$ is a singular vertex that does not receive a path of length $n-1$, then $v \in \partial E$ but $v \notin \dom(\G_{[n-1]})$. By Lemma~\ref{stg}~(\ref{it3:stg}), $\G$ is not strongly $\mathbb{Z}/n\mathbb{Z}$-graded, so $L_R(E)$ is not strongly $\ZZ/n\ZZ$-graded.
\end{proof}

We now can easily recover one of the main theorems of \cite{rhs}, namely~\cite[Theorem 3.15]{rhs}, and identify a large collection of strongly graded algebras. 

\begin{corollary} \label{recovery}
	Let $E$ be a row-finite graph. 
	
	\begin{enumerate}[\upshape(1)]
		\item  $L_R(E)$ is a strongly $\mathbb Z/2\mathbb Z$-graded ring if and only if $E$ has no isolated vertex.
		\item  $L_R(E)$ is a strongly $\mathbb Z/n\mathbb Z$-graded ring if $E$ has no sink.
	\end{enumerate}
	If $E^0$ is finite:
	\begin{enumerate}[\rm (1)] \setcounter{enumi}{2}
		\item \label{recovery 3} $L_R(E)$ is a strongly $\mathbb Z$-graded ring if and only if $E$ has no sink. 	
	\end{enumerate}
\end{corollary}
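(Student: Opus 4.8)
The plan is to derive all three parts from the two results that immediately precede the corollary: parts (1) and (2) from the proposition characterising strongly $\ZZ/n\ZZ$-graded Leavitt path algebras (\emph{every singular vertex receives a path of length $n-1$}), and part (3) from Theorem~\ref{thm:LPA}. The standing hypothesis that $E$ is row-finite is used throughout to identify its singular vertices with its sinks, and hence its isolated vertices with exactly the sinks that receive no edge.

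For part (1) I would specialise the preceding proposition to $n=2$: $L_R(E)$ is strongly $\ZZ/2\ZZ$-graded if and only if every sink receives a path of length $1$, that is, an edge. It then remains to record the purely combinatorial equivalence ``every sink receives an edge'' $\Longleftrightarrow$ ``$E$ has no isolated vertex.'' This is a one-line observation: a sink receiving no edge is by definition isolated, and conversely an isolated vertex is a sink that receives no edge. For part (2), the assumption that $E$ has no sink (equivalently, since $E$ is row-finite, no singular vertex) makes the condition of the preceding proposition vacuously true, so $L_R(E)$ is strongly $\ZZ/n\ZZ$-graded for every $n$; only this sufficiency direction is asserted, so nothing further is needed.

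For part (3) I would additionally assume $E^0$ is finite. The forward implication is immediate from Theorem~\ref{thm:LPA}: if $L_R(E)$ is strongly $\ZZ$-graded then $E$ in particular has no sink. For the converse I would invoke Theorem~\ref{thm:LPA} a second time, so it suffices to check that a row-finite graph with $E^0$ finite and no sinks satisfies Condition~(Y). Here I would appeal to the observation recorded after Definition~\ref{Y}, reducing the task to showing that every infinite path meets a vertex that is the base of a cycle. This is a pigeonhole argument: an infinite path $p = p_1 p_2 \cdots$ passes through only finitely many vertices, so some vertex $v$ recurs, say $r(p_i) = r(p_j) = v$ with $i < j$, and the return segment $p_{i+1}\cdots p_j$ is then a cycle based at $v$ (taking the first recurrence yields a genuine simple cycle). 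Thus Condition~(Y) holds and Theorem~\ref{thm:LPA} applies.

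The only substantive step is the verification of Condition~(Y) in part (3); everything else is bookkeeping with the two cited results, and in particular no appeal to the groupoid machinery beyond Theorem~\ref{thm:LPA} is required. I expect the main obstacle to be organising the reduction cleanly rather than any deep difficulty: one must state the translation between ``receives a path of length $n-1$'' and the graphical conditions correctly for each $n$, and confirm that finiteness of $E^0$ is precisely what powers the pigeonhole step producing the cycle. Once Condition~(Y) is secured, Theorem~\ref{thm:LPA} closes the argument with no further analysis.
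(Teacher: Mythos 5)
Your proposal is correct and follows exactly the route the paper intends: the corollary is stated without proof as an immediate consequence of the preceding Proposition (specialised to $n=2$ for part (1), vacuously for part (2)) and of Theorem~\ref{thm:LPA} combined with the pigeonhole observation that a finite graph's infinite paths must revisit a vertex, yielding a cycle and hence Condition~(Y), for part (3). The combinatorial translations you record (singular $=$ sink under row-finiteness, sink receiving no edge $=$ isolated vertex) are precisely the bookkeeping the paper leaves to the reader.
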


\subsection{Kumjian-Pask algebras of higher-rank graphs}

\label{KPA}

\subsubsection{Preliminaries on $k$-graphs}
We view the additive semigroup $\NN^k$ as a category with one object $0 := (0,\dots, 0)$ and equip it with the coordinate-wise partial order
\[
m \le n \iff m_i \le n_i \text{ for } 1 \le i \le k.
\]
With this partial order, $\NN^k$ is a lattice, and we use the notation 
$\vee$ for the supremum (coordinatewise maximum) and $\wedge$ for the infimum 
(coordinatewise minimum). We denote the usual generators in $\NN^k$ by $\{e_i \mid  1 \le i \le k\}$. 
The partial order $\le$ also extends to the abelian group $\ZZ^k$.
\begin{definition}
A \textit{higher-rank graph} of \textit{rank $k$}, or 
\textit{$k$-graph} for short, is a countable small category $\Lambda = (\Lambda^0, \Lambda, r, s)$, 
together with a functor $d: \Lambda \to \NN^k$, called the \textit{degree map}, satisfying the \textit{factorisation property}: for every $\lambda \in \Lambda$ and $m,n \in \NN^k$ with $d(\lambda) = m+n$, there are unique morphisms $\mu, \nu \in \Lambda$ with $d(\mu) = m$, $d(\nu) = n$, and $\lambda = \mu\nu$.
\end{definition} 

We write $\lambda = \lambda(0,m)\lambda(m, m+n)$, where $\lambda(0,m)$ and $\lambda(m,m+n)$ are the unique composable factors of $\lambda$ with degrees $m$ and $n$ respectively. The set $\Lambda^0$ is the set of objects, which we may think of as \textit{vertices}, and we identify each object $v \in \Lambda^0$ with the identity morphism at $v$, which according to the factorisation property is the only morphism $v \to v$ in $\Lambda$. The maps $r, s: \Lambda \to \Lambda^0$ are the \textit{range} and \textit{source} maps respectively. For $n \in \NN^k$, we define $\Lambda^n := d^{-1}(n)$, and call the elements of $\Lambda^n$ \textit{paths of degree $n$}. Note that $1$-graphs are the same as ordinary directed graphs, although the notation differs and the roles of $r$ and $s$ are swapped (see \cite[Example 1.3]{kp}). For a vertex $v \in \Lambda^0$, we use the notation, 
\begin{align*}
v\Lambda &:= \{\lambda \in \Lambda \mid  r(\lambda) = v\},\\
v\Lambda^n &:= \{\lambda \in \Lambda^n  \mid r(\lambda) = v \}.
\end{align*}
We say that $\Lambda$ is \textit{row-finite} if $v\Lambda^n$ is finite for every $v \in \Lambda^0$ and $n \in \NN^k$. A \textit{source} is a vertex $v \in \Lambda^0$ such that $v\Lambda^{e_i} = \emptyset$ for some $1 \le i \le k$.
The $k$-graph $\Lambda$ has no sources if and only if $v\Lambda^n$ is non-empty for every $v \in \Lambda^0$ and $n \in \NN^k$.

We follow \cite{CP} in saying that $\tau$ is a \textit{minimal common extension} of $\lambda, \mu \in \Lambda$ if $d(\tau) = d(\lambda) \vee d(\mu)$, $\tau(0,d(\lambda)) = \lambda$, and $\tau(0, d(\mu)) = \mu$. Not every pair of paths has a common extension and, if $k \ge 2$, a pair of paths may have more than one minimal common extension. Let $\MCE(\lambda, \mu)$ be the set of all minimal common extensions of $\lambda$ and $\mu$, and
\begin{align*}
\Lambda^{\min}(\lambda, \mu) :&= \{(\rho, \tau) \in \Lambda \times \Lambda \mid  \lambda \rho = \mu \tau \in \MCE(\lambda, \mu)\} \\
&= \{(\rho, \tau) \in \Lambda \times \Lambda \mid  \lambda \rho = \mu \tau \text{ and } d(\lambda\rho) = d(\lambda)\vee d(\mu)\}.
\end{align*}
We say that the $k$-graph $\Lambda$ is \textit{finitely aligned} if $\Lambda^{\min}(\lambda, \mu)$ is finite for every $\lambda, \mu \in \Lambda$ or, equivalently, if $\MCE(\lambda, \mu)$ is finite for every $\lambda, \mu \in \Lambda$. Every row-finite $k$-graph is finitely aligned, but there exist finitely aligned $k$-graphs which are not row-finite \cite{CP}. From here on, we assume $\Lambda$ is a finitely aligned $k$-graph.

A subset $E \subseteq v\Lambda$ is said to be \textit{exhaustive} if for every $\lambda \in v\Lambda$, there exists $\mu \in E$ such that $\Lambda^{\min}(\lambda, \mu) \ne~\emptyset$. As in \cite{CP}, we define 
\begin{align*}
v\FE(\Lambda) &:= \{E \subseteq v\Lambda\setminus \{v\} \mid  E \text{ is finite and exhaustive} \}, \\
\FE(\lambda) &:= \bigcup_{v \in \Lambda^0} v\FE(\Lambda).
\end{align*}
If $v \in \Lambda^0$, $\lambda \in v\Lambda$, and $E \in v\FE(\Lambda)$, the set
\[
\Ext(\lambda, E): = \bigcup_{\mu \in E} \left\{ \rho \in \Lambda \mid  (\rho, \tau) \in \Lambda^{\min}(\lambda, \mu) \text{ for some } \tau \in \Lambda\right\}
\]
is a finite exhaustive subset of $s(\lambda)\Lambda$, according to \cite[Lemma C.5]{RSY04}, and the set
\begin{equation} \label{setI}
I(E) := \bigcup_{i = 1}^k \{\lambda(0, e_i)\mid  \lambda \in E, d(\lambda)_i > 0 \}
\end{equation}
is a finite exhaustive subset of $v\Lambda$ \cite[Lemma C.6]{RSY04} whose elements can be viewed as edges (of various colours).

We refer to \cite[Example 3.2]{FMY05} for the definition of the row-finite $k$-graphs $\Omega_{k,m}$, for $m \in (\NN\cup\{\infty\})^k$. Briefly:
\begin{align*} \Omega_{k,m}^0 :&= \{p \in \NN^k \mid  p \le m\}, & \Omega_{k,m} :&= \{(p,q) \in \Omega_{k,m}^0 \times \Omega_{k,m} \mid  p \le q\},\\
r(p,q) &= p, & s(p,q) &= q, & & d(p,q) = q-p.
\end{align*}

\begin{definition}\cite[Definitions 5.1 and 5.10]{FMY05}
Let $\Lambda$ be a $k$-graph. A \textit{boundary path} in $\Lambda$ is a degree-preserving functor $x: \Omega_{k,m} \to \Lambda$ with the property: for all $n \in \NN^k$ with $n \le m$, and all $E \in x(n)\FE(\Lambda)$, there exists some $\lambda \in E$ such that $x(n, n+d(\lambda)) = \lambda$. We write $\partial \Lambda$ for the set of all boundary paths in $\Lambda$, and say that the \textit{degree} of $x \in \partial \Lambda$ is $d(x) := m$.
\end{definition}

We also have the notion of an \textit{infinite path} \cite{ACHR}, which is a degree-preserving functor $x : \Omega_{k,(\infty,\dots,\infty)} \to \Lambda$. In \cite[Proposition 2.12]{W11}, we see that every infinite path is a boundary path. We denote the set of infinite paths by $\Lambda^\infty$, and write $r(x) :=x(0)$ for the \textit{range} of an infinite path. For $v \in \Lambda^0$, we also write
\[
v \Lambda^\infty := \{x \in \Lambda^\infty \mid  r(x) = v \}.
\]
We recall some facts about infinite paths from \cite[Lemma 2.5]{ACHR}. 
\begin{itemize}
	\item Finite paths and infinite paths can be composed: if $\lambda \in \Lambda$ and $x \in s(\lambda)\Lambda^\infty$ then there is a unique $y \in \Lambda^\infty$ such that $y(0,n) = \lambda x(0, n - d(\lambda))$ for all $n \ge d(\lambda)$. In this case we write $\lambda x:= y$.
	
	\smallskip

	\item Infinite paths have a factorisation property: if $x \in \Lambda^\infty$ then there exist unique $x(0,n) \in \Lambda^n$ and $x(n, \infty) \in \Lambda^\infty$ such that $x = x(0, n)x(n, \infty)$. Moreover, $s(x(0,n)) = x(n) = r(x(n,\infty))$.
\end{itemize}

If $\Lambda$ is row-finite and has no sources, then the boundary path space is considerably easier to work with, 
because $\partial \Lambda = \Lambda^\infty$ (see \cite[Proposition~2.12]{W11}). 

\subsubsection{Strongly graded Kumjian-Pask algebras}

\begin{lemma}
\label{lem:hardone}
Let $\Lambda$ be a finitely aligned $k$-graph with no sources.  Suppose $\Lambda$ is not row-finite.  
Then there exists $x \in \partial \Lambda$ and
 $i \in \{1, ..., k\}$ such that $d(x)_i < \infty$.
\end{lemma}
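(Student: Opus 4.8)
The plan is to reduce to a single infinite emitter, isolate a clean combinatorial fact about finite exhaustive sets at such a vertex, and then hand-build a boundary path that never moves in the offending direction. First I would record that a finitely aligned $k$-graph is row-finite if and only if $v\Lambda^{e_i}$ is finite for every $v \in \Lambda^0$ and every $i$. The nontrivial direction is an induction on $|n| = n_1 + \dots + n_k$ using the factorisation property: when $n_i > 0$, the map $v\Lambda^n \to v\Lambda^{e_i}$, $\lambda \mapsto \lambda(0,e_i)$, has the fibre over $g$ in bijection with $s(g)\Lambda^{n-e_i}$, which is finite by the inductive hypothesis, so $v\Lambda^n$ is finite once all the edge-sets are. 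Since $\Lambda$ is not row-finite, this yields $v \in \Lambda^0$ and $i \in \{1,\dots,k\}$ with $v\Lambda^{e_i}$ infinite, and the goal becomes to produce $x \in \partial\Lambda$ with $d(x)_i = 0$.

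Next I would prove the combinatorial heart of the matter: if $v\Lambda^{e_i}$ is infinite, then every $E \in v\FE(\Lambda)$ contains an element $\mu$ with $d(\mu)_i = 0$. The point is that any $\mu \in E$ with $d(\mu)_i \ge 1$ can admit a minimal common extension with an edge $g \in v\Lambda^{e_i}$ only when $g = \mu(0,e_i)$, since the $e_i$-component of $\MCE(g,\mu)$ must equal both $g$ and $\mu(0,e_i)$; hence each such $\mu$ meets at most one edge of $v\Lambda^{e_i}$. As $E$ is finite, its elements of positive $i$-degree meet only finitely many edges of the infinite set $v\Lambda^{e_i}$, so by exhaustiveness some $\mu \in E$ of $i$-degree $0$ must meet one of the remaining edges. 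This is exactly what makes the boundary condition satisfiable at an infinite $i$-emitter without extending in direction $i$: at such a vertex, and for the relevant coordinate value $n_i = 0$, every finite exhaustive set has a witness of $i$-degree $0$.

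I would then build $x$ with range $v$ and $d(x)_i = 0$ by the usual recursive boundary-path construction, but carried out entirely inside the directions $\ne i$: maintain a finite path of $i$-degree $0$ and discharge, one at a time, the countably many obligations coming from finite exhaustive sets at the vertices already visited, each time extending by a path of $i$-degree $0$ that passes through the relevant set. Finite alignment keeps the sets $\Ext(\lambda,E)$ and $I(E)$ finite, so the tree of partial extensions is finitely branching, and a K\"onig's-lemma (inverse-limit) argument produces a degree-preserving functor $x:\Omega_{k,m}\to\Lambda$ with $m_i = 0$; the no-sources hypothesis guarantees the construction never stalls in the remaining directions. One then checks the limit satisfies the boundary condition and so lies in $\partial\Lambda$.

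The hard part will be guaranteeing that the recursion can always stay in the directions $\ne i$, i.e.\ that every vertex forced onto the terminal face $\{n : n_i = 0\}$ still admits an $i$-degree-$0$ witness for each of its finite exhaustive sets. The key fact above delivers this at infinite $i$-emitters, but a vertex reached along a non-$i$ edge need not be an infinite $i$-emitter, and a vertex emitting only finitely many $i$-edges may carry a finite exhaustive set all of whose elements have positive $i$-degree, which would force the path back into direction $i$. Controlling this --- showing, via the factorisation (commuting-square) structure together with finite alignment, that the extensions can be chosen so as to remain among infinite $i$-emitters, or at least among vertices admitting $i$-degree-$0$ witnesses --- is the delicate core of the proof. (In the degenerate sub-case where some reachable vertex $w$ has $w\FE(\Lambda) = \emptyset$, everything collapses: the trivial path at $w$ is already a boundary path of degree $0$.)
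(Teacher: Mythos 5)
Your first two steps are correct: the reduction of non-row-finiteness to the existence of a vertex $v$ and index $i$ with $v\Lambda^{e_i}$ infinite is fine, and so is your covering lemma (an element $\mu\in E$ with $d(\mu)_i\ge 1$ satisfies $\Lambda^{\min}(g,\mu)\ne\emptyset$ for at most the single edge $g=\mu(0,e_i)$, so exhaustiveness of $E$ against the infinitely many remaining $i$-edges forces some $\mu\in E$ with $d(\mu)_i=0$). But the argument is incomplete at exactly the point you flag yourself: nothing in your proposal guarantees that after extending by such a $\mu$ the new terminal vertex again admits $i$-degree-$0$ witnesses for its finite exhaustive sets, so the recursion may be forced back into direction $i$ after finitely many steps. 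Since you explicitly leave this ``delicate core'' unresolved, the proposal does not prove the lemma; identifying the obstruction is not the same as overcoming it.

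What closes the gap --- and it is the key move in the paper's proof --- is a pigeonhole refinement of your own covering argument. The positive-$i$-degree elements of $E$ meet only finitely many edges of the infinite set $v\Lambda^{e_i}$, so exhaustiveness forces some \emph{single} $\mu\in E$ with $d(\mu)_i=0$ to satisfy $\Lambda^{\min}(g,\mu)\ne\emptyset$ for \emph{infinitely many} $g\in v\Lambda^{e_i}$. For each such $g$, a minimal common extension has degree $d(\mu)\vee e_i=d(\mu)+e_i$ and has the form $\mu\tau_g$ with $d(\tau_g)=e_i$; distinct $g$ give distinct $\tau_g$ because $g=(\mu\tau_g)(0,e_i)$. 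Hence $|s(\mu)\Lambda^{e_i}|=\infty$, i.e.\ the witness can always be chosen so that its source is again an infinite $i$-emitter. This is the invariant that lets the construction iterate forever; the paper implements it by choosing each $\lambda_n$ from the edge set $I(E)$ so that it meets infinitely many $i$-edges, which simultaneously forces $d(\lambda_n)_i=0$ and $|s(\lambda_n)\Lambda^{e_i}|=\infty$. Note also that producing an infinite path avoiding direction $i$ is not yet enough: one must still verify the boundary-path condition at \emph{every} $n\le d(x)$, where the pulled-back exhaustive sets $\Ext(\sigma,E)$ could a priori force movement in direction $i$. The paper handles this by building $x$ greedily (cycling through the components $j\ne i$ so that $d(x)$ is as large as possible in each), and then using no-sources, pigeonhole, and finite alignment in a final verification; your alternative of discharging the countably many obligations one at a time would also work, and is arguably more direct, but each discharge step itself requires applying the strengthened lemma above to $\Ext(\sigma,E)$ at the current endpoint --- so it, too, is unavailable until the propagation invariant is established.
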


\begin{proof} {Let $k \ge 2$, since the statement for 1-graphs is trivial.} We consider two cases. 
First, suppose there exists $v \in \Lambda^0$ such that $v\operatorname{FE}(\Lambda) = \emptyset$.  
Then $v \in \partial \Lambda$ and we are done.
For the second case, suppose that $v\operatorname{FE}(\Lambda)\neq \emptyset$ for every $v \in \Lambda^0$. Since $\Lambda$ is not row-finite, there exists $v\in \Lambda^0$ and
$i \in \{1, ..., k\}$ such that $|v\Lambda^{e_i}|=\infty$.
  
Since there exists $E \in v\operatorname{FE}(\Lambda)$, there also exists a finite exhaustive set $I(E) \in v\FE(\Lambda)$,
consisting entirely of edges (see (\ref{setI})). So we can find an edge $\lambda_1 \in I(E)$ such that $\Lambda^{\min}(\mu, \lambda_1) \ne \emptyset$ for infinitely many distinct $\mu \in v\Lambda^{e_i}$. It follows that $d(\lambda_1)_i=0$ and  
$|s(\lambda_1)\Lambda^{e_i}|=\infty$.  Similarly, for $n >1$ we can find $\lambda_n \in s(\lambda_{n-1})\Lambda$ such that
\begin{equation}\label{desired property} d(\lambda_n)_i=0 \text{ and } |s(\lambda_n)\Lambda^{e_i}|=\infty.
\end{equation}  
As $n$ increases,  we pick $\lambda_n$ so that the degree of   
$x:= \lambda_1...\lambda_n ... $
is as large as possible in each component.  We do this
by cycling through each of the components $j \in \{1, ..., k\}$ as $n$ increases.
For example, when $n=1$, we start with $j=1$ and look for an
edge $\lambda_1$ with the desired property (\ref{desired property}) and with degree $e_1$.  
If there is no such edge (for example, if $i=1$), we try $j=2$ and look for an edge of degree $e_2$ so on.  
Then for $n=2$, we start with $j=j_1+1$, where  $j_1$ is such that $d(\lambda_1)_{j_1}=1$.
Note that,
\begin{enumerate}
           \item\label{it1:proof} $d(x)_i=0$; and
           \smallskip

           \item\label{it2:proof} If $d(x)_j < \infty$ for some $j$ then 
           there exists $N$ such that $d(\lambda_N)=e_j$ and 
           \[d(\lambda_1...\lambda_N)_{j} = d(x)_j.\]
          
          \smallskip

          \item\label{it3:proof} We claim that for every $n > N$ we have 
           $|x(n)\Lambda^{e_j}|=\infty$ (where $N$ and $j$ are from \eqref{it2:proof}). 
           For otherwise, there exists some $n>N$ such that $E_n:=x(n)\Lambda^{e_j} \in x(n)\operatorname{FE}(\Lambda)$.  
           But then  for every $m>0$ we have a finite exhaustive  set
           \[\operatorname{Ext}(\lambda_{n+m}, E_n) \subseteq s(\lambda_{n+m})\Lambda^{e_j}.\]  
           Since our choice of $x$ makes the degree in each component as large as possible, we eventually will choose 
           a $\lambda_m$ with degree $e_j$, for example in some $I(\Ext(\lambda_{n+m}, E_n))$, which is a contradiction. This verifies the claim.
\end{enumerate}

We show $x \in \partial \Lambda$ using an approach similar to
\cite[Proposition~2.12]{W11}. 
Fix $n \leq d(x)$ and $E \in x(n)\operatorname{FE}(\Lambda)$.  
Define $t \in  \NN^k$ such that for $j \in \{1, ..., k\}$, 
\[t_j := \begin{cases} d(x)_j & \text{if } d(x)_j < \infty \\
          \underset{\lambda \in E}{\operatorname{max}}\{d(\lambda)_j\}&\text{if } d(x)_j = \infty.
         \end{cases}\]
Then $n+ t < d(x)$ and we can consider $x(n, n+t) \in x(n)\Lambda$.
Define 
\[F_t:= \{j \in \{1, ..., k\}\mid x(n+t)\Lambda^{e_j} \text{ is infinite}\}.\]
Then, since $\Lambda$ has no sources,  there exist an infinite number of paths \[\delta \in x(n+t)\Lambda^{\sum_{j\in F_t} e_j},\]
which we label $\delta_1, \delta_2, \dots$.  Since $E$ is finite exhaustive, there exists a single
$\nu \in E$ which has common extensions with infinitely many $x(n, n+t)\delta_q$. For each $q \in \NN$ (by passing to a subsequence and relabelling), there exists $(\alpha_q,\beta_q) \in \Lambda^{\min}(x(n, n+t)\delta_q, \nu)$ whereby,
\begin{equation}\label{eq2}x(n, n+t)\delta_q \alpha_q = \nu \beta_q.\end{equation}
We claim that 
$d(\nu) \leq t$. If $j$ is such that $d(x)_j=\infty$, then $d(\nu)_j < t_j$ by construction. 
Otherwise, suppose $j$ is such that $t_j:=d(x)_j < \infty$.  Then we have $j \in F_t$ by item~\eqref{it3:proof} above.
By way of contradiction, suppose
$d(\nu)_j > t_j$.  Write $\nu = b\nu'$, where $d(b) = t_j+e_j$.
So for each $q \in \NN$ we have from \eqref{eq2}
\[\left(x(n,n+t)\delta_q\alpha_q\right)(0, t+e_j) = x(n, n+t)(\delta_q(0,e_j)) = b\nu'\beta_q(0,t+e_j).\]
But then $|\Lambda^{\operatorname{min}}(x(n,n+t), b)|=\infty$, which contradicts that $\Lambda$ is finitely aligned. Thus $d(\nu)_i\leq d_j$.  Now
\eqref{eq2} gives
$x(n+d(\nu)) = \nu$,
so $x \in \partial \Lambda$.  
\end{proof}

\begin{lemma}
 \label{lem:sources} 
 Let $\Lambda$ be a finitely aligned $k$-graph and suppose that $v \in \Lambda^0$ is a source.
 Then there exists $x \in \partial \Lambda$ and
 $i \in \{1, ..., k\}$ such that $d(x)_i < \infty$.
\end{lemma}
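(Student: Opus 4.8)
The plan is to produce a single boundary path whose range is the source $v$; its degree will then be forced to have a zero (hence finite) entry in the coordinate that witnesses $v$ being a source. Since $v$ is a source, fix $i \in \{1,\dots,k\}$ with $v\Lambda^{e_i} = \emptyset$. I first observe that \emph{any} boundary path $x \in \partial\Lambda$ with $r(x) = v$ automatically satisfies $d(x)_i = 0 < \infty$: if instead $d(x)_i \ge 1$, then $e_i \le d(x)$, so by the factorisation property the initial segment $x(0, e_i)$ is a path of degree $e_i$ with $r(x(0,e_i)) = x(0) = v$, i.e.\ an element of $v\Lambda^{e_i}$, contradicting $v\Lambda^{e_i} = \emptyset$. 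Hence the entire problem reduces to exhibiting one boundary path with range $v$.

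To build such a path, I would argue as in the proof of Lemma~\ref{lem:hardone} and \cite[Proposition~2.12]{W11}. In the degenerate case $v\FE(\Lambda) = \emptyset$, the degree-zero functor $x := v\colon \Omega_{k,0} \to \Lambda$ with image $v$ is already a boundary path, since the defining condition is then vacuous, and $d(x) = 0$. Otherwise I would construct $x$ greedily as the limit of an increasing chain of finite paths $v = \mu_0, \mu_1, \mu_2, \dots$, all with range $v$, arranging by a bookkeeping over the (countably many) intermediate vertices $x(n)$ and their finite exhaustive sets $E \in x(n)\FE(\Lambda)$ that $x$ eventually passes through some $\lambda \in E$. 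The point that keeps $d(x)_i = 0$ throughout is that every vertex reachable from $v$ is again an $i$-source: if $\lambda \in v\Lambda$ with $s(\lambda) = w$ and $e \in w\Lambda^{e_i}$, then $\lambda e \in v\Lambda$ has positive $i$-degree and so factors through an element of $v\Lambda^{e_i}$, which is impossible. Consequently no extension in the construction ever increases the $i$-th coordinate.

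The main obstacle is the rigorous verification that the limiting functor $x$ genuinely satisfies the boundary-path condition at \emph{every} $n \le d(x)$ and every $E \in x(n)\FE(\Lambda)$; this is the same technical heart encountered in Lemma~\ref{lem:hardone}, and it relies crucially on finite alignment to control the minimal common extensions $\Lambda^{\min}(\cdot,\cdot)$ and thereby guarantee that each exhaustive-set obligation is discharged by a finite extension, so that none is missed in the limit. Alternatively, one may bypass the explicit construction altogether by invoking the standard fact that the range map $r\colon \partial\Lambda \to \Lambda^0$ is surjective for finitely aligned $k$-graphs \cite{FMY05}, after which only the observation of the first paragraph is required to conclude $d(x)_i = 0 < \infty$.
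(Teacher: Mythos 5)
Your proposal is correct and, via its closing alternative, coincides with the paper's own proof: the paper simply cites \cite[Lemma~5.15]{FMY05} for $v\partial\Lambda \neq \emptyset$ and then observes $d(x)_i = 0$, which is exactly the reduction in your first paragraph (any boundary path $x$ with $r(x)=v$ must have $d(x)_i=0$, else $x(0,e_i)\in v\Lambda^{e_i}$). The greedy construction occupying the bulk of your write-up is therefore unnecessary --- and, as you yourself note, its verification of the boundary-path condition in the limit is left open --- so the clean route through the surjectivity of $r\colon \partial\Lambda \to \Lambda^0$ is the one to keep.
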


\begin{proof}
By the definition of a source, there exists 
$i \in \{1, ..., k\}$ such that $v\Lambda^{e_i} = \emptyset$.  
By \cite[Lemma~5.15]{FMY05} $v\partial \Lambda \neq \emptyset$ 
so we can chose $x \in v\partial \Lambda$. Then $d(x)_i =0$.  
\end{proof}

\bigskip

We now introduce the boundary path groupoid $G_\Lambda$ 
which was first studied by Yeend in \cite{Yeend}.  The boundary path groupoid 
 is a Hausdorff ample groupoid, with unit space $\partial \Lambda$, that generalises the boundary path groupoid of an ordinary directed graph (or $1$-graph).
We shall use the notation 
\[
\Lambda *_s \Lambda := \{(\lambda, \mu) \in \Lambda \times \Lambda \mid  s(\lambda) = s(\mu)\}.
\]

The morphisms in $G_\Lambda$ are triples $(\lambda z, d(\lambda) - d(\mu), \mu z) \in \partial \Lambda \times \ZZ^k \times\partial \Lambda$, where $(\lambda, \mu) \in \Lambda *_s \Lambda$ and $z \in s(\lambda)\partial \Lambda$. The unit space of $G_\Lambda$ is the set of morphisms $\{(x, 0, x) \mid x \in \partial E\}$ and we identify $(x, 0, x) \in \G_\Lambda^{(0)}$ with $x \in \partial \Lambda$.
The domain, codomain, and inversion, and composition maps are given by the following formulae. 
\begin{align*}
 \dom(x,m,y) = y, && \cod(x, m, y) = x, && (x, m, y)^{-1} = (y,-m,x), && (x, m, y)(y, l, z) = (x, m+l, z).
\end{align*}
To equip $G_\Lambda$ with a suitable topology, 
we define the following open sets for any pair $(\lambda, \mu) \in \Lambda *_s \Lambda$ and finite non-exhaustive subset $F \subseteq s(\lambda)\Lambda$, 
\begin{align*}
Z(\lambda*_s \mu) &:= \{(\lambda z, d(\lambda) - d(\mu), \mu z) \mid  z \in s(\lambda)\partial \Lambda \}, \\
Z(\lambda *_s \mu \setminus F) &:= Z(\lambda *_s \mu) \setminus \bigcup_{\phi \in F} Z(\lambda\phi *_s \mu\phi).
\end{align*}
The collection of sets $Z(\lambda *_s \mu \setminus F)$ forms a basis of compact open bisections for a Hausdorff topology on $G_\Lambda$, making it an ample groupoid. The continuous functor $\varphi: \G_\Lambda \to \ZZ^k$ sending $(x, m, y) \mapsto m$ makes $\G_\Lambda$ a $\ZZ^k$-graded groupoid (and hence its Steinberg algebra is $\ZZ^k$-graded).

We refer to \cite[Definition 3.1, Theorem 3.7]{CP} for a fully 
algebraic definition of the Kumjian-Pask algebra $\KP_R(\Lambda)$ of a 
finitely aligned higher-rank graph $\Lambda$, with coefficients in a unital 
commutative ring $R$. For our purposes it suffices to say that $\KP_R(\Lambda)$ 
is a $\ZZ^k$-graded $R$-algebra that is graded isomorphic to $A_R(G_\Lambda)$. 
We refer to \cite[Theorem 5.4]{CP} for the details on that isomorphism.

\begin{proposition}
\label{prop:grrfns}
Let $\Lambda$ be a finitely aligned $k$-graph.   
\begin{enumerate}[\upshape(1)]
\item\label{it1:grrfns} If $G_{\Lambda}$ is strongly ${\ZZ}^k$-graded, then $\Lambda$ has no sources.

\smallskip

\item\label{it2:grrfns}  If $G_{\Lambda}$ is strongly ${\ZZ}^k$-graded, then $\Lambda$ is row finite.
\end{enumerate}
\end{proposition}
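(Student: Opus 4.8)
The plan is to prove both implications by contraposition, reducing each to a single obstruction expressed through the domain criterion for strong grading in Lemma~\ref{stg}~\eqref{it3:stg}. The two preparatory lemmas already do the geometric work: each produces a boundary path whose degree is finite in some coordinate, and I will show that such a path cannot lie in the domain of a sufficiently negative homogeneous component of $G_\Lambda$.

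The key step I would isolate is the following claim: \emph{if there exist $x \in \partial\Lambda$ and $i \in \{1,\dots,k\}$ with $n_i := d(x)_i < \infty$, then $G_\Lambda$ is not strongly $\ZZ^k$-graded.} To see this, fix any integer $M > n_i$ and consider the unit $(x,0,x) \in G_\Lambda^{(0)}$. By the description of the morphisms of $G_\Lambda$, any $g$ with $\dom(g) = x$ has the form $g = (\lambda z,\, d(\lambda)-d(\mu),\, \mu z)$ with $\mu z = x$ and $z \in s(\mu)\partial\Lambda$. Since $\mu z = x$ gives $d(\mu) \le d(\mu) + d(z) = d(x)$, we have $d(\mu)_i \le n_i$; but if $\varphi(g) = d(\lambda) - d(\mu) = -M e_i$, then $d(\lambda)_i = d(\mu)_i - M \le n_i - M < 0$, which is impossible for the finite path $\lambda \in \Lambda$. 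Hence no morphism of degree $-M e_i$ has domain $x$, so $\dom\big((G_\Lambda)_{-Me_i}\big) \ne \partial\Lambda = G_\Lambda^{(0)}$, and $G_\Lambda$ is not strongly graded.

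Granting this claim, part~\eqref{it1:grrfns} is immediate: if $\Lambda$ has a source then Lemma~\ref{lem:sources} supplies a pair $(x,i)$ with $d(x)_i < \infty$, so $G_\Lambda$ is not strongly graded. For part~\eqref{it2:grrfns} I would argue by contradiction, and here the logical order matters: assuming $G_\Lambda$ is strongly $\ZZ^k$-graded, part~\eqref{it1:grrfns} first guarantees that $\Lambda$ has no sources, which is exactly the hypothesis needed to invoke Lemma~\ref{lem:hardone}. If $\Lambda$ were not row-finite, that lemma would again produce a boundary path $x$ with $d(x)_i < \infty$ for some $i$, contradicting the claim; hence $\Lambda$ is row-finite.

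The conceptual difficulty is entirely absorbed by Lemmas~\ref{lem:sources} and~\ref{lem:hardone}, so the work remaining here is bookkeeping. The step most prone to error is the morphism-level computation: one must correctly identify which entry of the triple is the domain, track the sign of the degree $d(\lambda)-d(\mu)$ under $\varphi$, and combine the two inequalities $d(\mu)_i \le d(x)_i$ (an initial segment has smaller degree) and $d(\lambda)_i \ge 0$ (finite paths have non-negative degree) to force the contradiction $d(\lambda)_i < 0$.
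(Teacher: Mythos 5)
Your proof is correct and follows essentially the same route as the paper: both parts are reduced to the domain criterion of Lemma~\ref{stg}~\eqref{it3:stg} using the boundary paths with a finite degree coordinate supplied by Lemmas~\ref{lem:sources} and~\ref{lem:hardone}, with part~\eqref{it2:grrfns} invoking part~\eqref{it1:grrfns} first so that Lemma~\ref{lem:hardone} applies. If anything, your explicit verification that $\dom^{-1}(x)\cap (G_\Lambda)_{-Me_i}=\emptyset$ for $M>d(x)_i$ is stated more carefully than the paper's corresponding step, which records the obstructing degree rather loosely.
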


\begin{proof}
For \eqref{it1:grrfns}, we prove the contrapositive.  Suppose $v \in \Lambda$ is a source.  
Then we apply Lemma~\ref{lem:sources} to get $x\in \partial \Lambda$ such that $d(x)_i < \infty$ for some $i \in \{1, \dots, k\}$.  Fix $m >d(x)_i$.
In the groupoid $\G_\Lambda$ we have 
\[\dom^{-1}(x) \cap (G_{\Lambda})_m = \emptyset\] 
and hence $G_{\Lambda}$ is not strongly graded. 

For \eqref{it2:grrfns} first apply part \eqref{it1:grrfns} and assume $\Lambda$ has no sources.  
By way of contradiction, assume $\Lambda$ is not row-finite.  Let $x$ be as in Lemma~\ref{lem:hardone}.
Then in the groupoid $\dom^{-1}(x) \cap (G_{\Lambda})_i = \emptyset$ which is a contradiction.
\end{proof}

To characterise strong grading on Kumjian-Pask algebras, we define a condition that generalises Condition (Y) from Definition \ref{Y}.

\begin{definition}
	Let $\Lambda$ be a $k$-graph. We say that $\Lambda$ satisfies \textit{Condition (Y)} if for every $m \in \NN^k$ and every infinite path $x \in \Lambda^\infty$, there exists some $n \in \NN^k$ and some path $\beta \in \Lambda$ such that $s(\beta) = x(n)$ and $d(\beta) - n = m$.
\end{definition}

\begin{example}\cite[Examples 2.2]{RSY03}
	Consider the $2$-graph $\Lambda$ with $1$-skeleton
	\[
	\xymatrix{^u\bullet\ar@(dl,ul)@[red]^f \ar@/^/@[blue][r]^e & ^v\bullet \ar@(ur,dr)@[red]^h \ar@/^/@[blue][l]^g}
	\]
	where blue edges ($e$ and $g$) have degree $e_1 = (1,0)$ and red edges ($f$ and $h$) have degree $e_2 = (0,1)$.
	The only infinite path $x$ with $r(x) = u$ is the following. 
	\[
	\xymatrix{ \ar@{.>}@[red][d] & \ar@{.>}@[red][d]& \ar@{.>}@[red][d] &  \ar@{.>}@[red][d] \\
		_u\bullet \ar@[red][d]_f & _v\bullet \ar@[red][d]_h \ar@[blue][l]^g& _u\bullet \ar@[red][d]_f \ar@[blue][l]^e &  _v\bullet \ar@[red][d]_h \ar@[blue][l]^g & \ar@[blue]@{.>}[l]\\
		_u\bullet \ar@[red][d]_f & _v\bullet \ar@[red][d]_h \ar@[blue][l]^g& _u\bullet \ar@[red][d]_f \ar@[blue][l]^e &  _v\bullet \ar@[red][d]_h \ar@[blue][l]^g & \ar@[blue]@{.>}[l]\\
		_u\bullet & _v\bullet \ar@[blue][l]^g &  _u\bullet \ar@[blue][l]^e &   _v\bullet \ar@[blue][l]^g &\ar@[blue]@{.>}[l] }
	\]
	The only infinite path $y$ with $r(y) = v$ is just $y = x(e_1, \infty) = gx$, which can be visualised by removing the first column in the diagram above. One can see that $\Lambda$ is row-finite without sources, and that it satisfies Condition (Y). For example, if $m \in \NN^k$ and $m_1$ is even, then $s(x(0, m)) = x(0) = u$ and $d(x(0,m)) = m$. If $m_1$ is odd, then $s(x(e_1, m+2e_1)) = s(x(e_1)) = v$, and $d(x(e_1, m+2e_1)) - e_1 = m$. A similar check works for the other infinite path $y$.
	
	There are already hints that the boundary path groupoid $G_\Lambda$, whose unit space is  $\partial \Lambda = \{x, y\}$, is strongly graded. For example, if $m = (-1,1)$, to prove that $(x,0,x) \in (G_\Lambda)_m(G_\Lambda)_{-m}$, we can write \[(x,0,x) = (fx,(-1,1),gx)(gx,(1,-1),fx).\]
	This idea is generalised and made precise in the following theorem.
\end{example}

\begin{theorem}\label{thm:KP}
	Let $\Lambda$ be a $k$-graph, and $R$ a unital commutative ring. The Kumjian-Pask algebra $\KP_R(\Lambda)$ is strongly $\ZZ^k$-graded if and only if $\Lambda$ is row-finite, has no sources, and satisfies Condition (Y).	
\end{theorem}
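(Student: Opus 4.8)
The plan is to transfer everything to the boundary path groupoid $G_\Lambda$. Since $\KP_R(\Lambda)$ is graded isomorphic to $A_R(G_\Lambda)$ (see \cite[Theorem 5.4]{CP}), Theorem~\ref{strong} reduces the statement to the assertion that $G_\Lambda$ is strongly $\ZZ^k$-graded if and only if $\Lambda$ is row-finite, has no sources, and satisfies Condition~(Y). By Lemma~\ref{stg}~\eqref{it3:stg}, $G_\Lambda$ is strongly graded precisely when $\dom\big((G_\Lambda)_m\big) = \partial\Lambda$ for every $m \in \ZZ^k$, so the whole proof amounts to decoding this domain condition in terms of paths. The organising observation is that an element $g \in (G_\Lambda)_m$ with $\dom(g) = x$ is necessarily of the form $g = (\lambda\, x(n,\infty),\, m,\, x)$, where $n := d(\mu)$ comes from its initial factor $\mu = x(0,n)$, and $\lambda \in \Lambda$ satisfies $s(\lambda) = x(n)$ and $d(\lambda) = n + m$; this is forced by writing $g = (\lambda z, d(\lambda)-d(\mu), \mu z)$ with $\mu z = x$.

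For the forward implication, suppose $G_\Lambda$ is strongly graded. By Proposition~\ref{prop:grrfns}, $\Lambda$ has no sources and is row-finite, whence $\partial\Lambda = \Lambda^\infty$ by \cite[Proposition~2.12]{W11}. It remains to verify Condition~(Y). Fix $m \in \NN^k$ and $x \in \Lambda^\infty \subseteq \partial\Lambda$. Since $\dom\big((G_\Lambda)_m\big) = \partial\Lambda$, there is $g \in (G_\Lambda)_m$ with $\dom(g) = x$; reading off its codomain factor as above produces $n \in \NN^k$ and a path $\beta := \lambda$ with $s(\beta) = x(n)$ and $d(\beta) - n = m$, which is exactly Condition~(Y).

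For the converse, assume $\Lambda$ is row-finite, has no sources, and satisfies Condition~(Y), so again $\partial\Lambda = \Lambda^\infty$. Fix $x \in \Lambda^\infty$ and an arbitrary $m \in \ZZ^k$. The key idea is to split $m = m^+ - m^-$ into its positive and negative parts $m^+ := m \vee 0$ and $m^- := (-m) \vee 0$ in $\NN^k$, and then to apply Condition~(Y) not to $x$ but to the shifted infinite path $x' := x(m^-,\infty)$ with target degree $m^+ \in \NN^k$. This yields $n_1 \in \NN^k$ and $\beta \in \Lambda$ with $s(\beta) = x'(n_1) = x(m^- + n_1)$ and $d(\beta) = n_1 + m^+$. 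Setting $n := m^- + n_1$, one checks that $s(\beta) = x(n)$ and $d(\beta) = n_1 + m^+ = n + m$, so $g := (\beta\, x(n,\infty),\, m,\, x)$ is a well-defined element of $(G_\Lambda)_m$ (its codomain $\beta\, x(n,\infty)$ lies in $\Lambda^\infty$ by the composition of a finite path with an infinite path) having $\dom(g) = x$. Hence $\dom\big((G_\Lambda)_m\big) = \partial\Lambda$ for every $m \in \ZZ^k$, and Lemma~\ref{stg} finishes the proof.

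The heavy lifting has already been done in the preceding lemmas: the forward direction rests entirely on Proposition~\ref{prop:grrfns}, and in particular on the delicate construction of a boundary path with a finite coordinate in Lemma~\ref{lem:hardone}. The only genuinely new subtlety in the theorem itself lies in the converse, where the partial (rather than total) order on $\ZZ^k$ forces the decomposition $m = m^+ - m^-$ and the device of shifting the base path by $m^-$ before invoking Condition~(Y); the routine points to double-check are that $x(m^-,\infty)$ is again an infinite path and that the resulting triple genuinely belongs to $G_\Lambda$.
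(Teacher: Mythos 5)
Your proposal is correct and follows essentially the same route as the paper: reduce to the groupoid via Theorem \ref{strong}, obtain row-finiteness and absence of sources from Proposition \ref{prop:grrfns}, read Condition (Y) off a degree-$m$ element with domain $x$ in the forward direction, and in the converse shift the infinite path before applying Condition (Y) so that the target degree lies in $\NN^k$. The only (cosmetic, and in fact slightly tidier) difference is that your decomposition $m = m^+ - m^-$ with shift by $m^-$ handles in a single stroke the three cases the paper treats separately ($m \ge 0$, $m \le 0$, and mixed signs, where the paper instead shifts by the constant vector $d$ with $d_i = \max_j m_j$).
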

\begin{proof}
	We prove that $G_\Lambda$ is strongly graded if and only if $\Lambda$ satisfies the hypotheses. Assume that $G_\Lambda$ is strongly $\ZZ$-graded. By Proposition \ref{prop:grrfns}, $\Lambda$ is row-finite and has no sources, so $\partial \Lambda = \Lambda^\infty$. Now let $m \in \NN^k$, and $x \in \Lambda^\infty$. Then we can factor $x$ in $G_\Lambda$ as $x = (x,0,x) = (x, -m, y)(y, m, x)$ for some $y \in \Lambda^\infty$. This implies that for some $n \in \NN^k$, we have $x(n,\infty) = y(n+m, \infty)$. Letting $\beta = y(0, n+m)$, we have $s(\beta) = x(n)$ and $d(\beta) - n = m$. Therefore $\Lambda$ satisfies Condition (Y).
	
	Now suppose that $\Lambda$ is row-finite and has no sources (so $\partial \Lambda = \Lambda^\infty$), and satisfies Condition (Y). Let $x \in \Lambda^\infty$. For arbitrary $m \in \ZZ^k$, we aim to write
	\[
	(x,0,x) = (x, m, y)(y, -m, x), 
	\]
	where $(x, m, y), (y, -m, x) \in G_\Lambda$. If $m \ge 0$ then this is easy: let $y = x(m,\infty)$. If $m \le 0$ then Condition~(Y) implies there is a path $\beta \in \Lambda$ with $s(\beta) = x(n)$ and $d(\beta) - n = -m$, so we let $y = \beta x(n, \infty)$. There is a third case, where some $m_i$ are positive and others are negative. Let $d \in \NN^k, \ d_i := \underset{1 \le j \le k}{\max}\{m_j\}$, which ensures $-m+d \in \NN^k$. Then $x' = x(d, \infty) \in \Lambda^\infty$, so we can apply the hypothesis to $-m+d$ and $x'$. This provides some $n \in \NN^k$ and $\beta \in \Lambda$ with $s(\beta) = x'(n) = x(d+n)$ and $d(\beta) - n = -m + d$. It follows that
	\[
	(x,0,x) =\big (x, m, \beta x(d+n, \infty)\big)\big(\beta x(d+n, \infty),-m, x\big).
	\]
	Therefore, $G_\Lambda$ is strongly graded.
\end{proof}

\begin{remark}
We can recover our Leavitt path algebra Theorem~\ref{thm:LPA} as a special case of
Theorem~\ref{thm:KP}. Strictly speaking, however, we have assumed that the higher-rank graphs in Section \ref{KPA} are countable, and have made no such restrictions on the graphs in Section \ref{LPA}.
\end{remark}

\subsection{The Steinberg algebra of a transformation groupoid}

Here, we discuss the groupoid associated to a partial action of a discrete group on a topological space. This groupoid was defined by Abadie in \cite{abadie}, and it has been studied recently in the context of partial skew group rings \cite{BG,hli}. We show that the Steinberg algebra of this groupoid is strongly $\Gamma$-graded if and only if the partial action is a global action.

Let $\Gamma$ be a discrete group with identity $\ep$, and $X$ a topological space. A \textit{partial action} of $\Gamma$ on $X$, is a pair $\theta =(\{X_\gamma\}_{\gamma \in \Gamma}, \{\theta_\gamma \}_{\gamma \in \Gamma})$, where:
\medskip
\begin{enumerate}[({P}1)]
	\item Each $X_\gamma$ is open in $X$, and each $\theta_\gamma: X_{\gamma^{-1}} \to X_\gamma$ is a homeomorphism;
	
	\smallskip

	\item $X_\ep = X$ and $\theta_{\gamma\delta}$ is an extension of $\theta_\gamma \theta_\delta$ for every $\gamma, \delta \in \Gamma$.
\end{enumerate}
\medskip
We say that the partial action is a \textit{global action} if $X_\gamma = X$ for every $\gamma \in \Gamma$; this corresponds with the usual definition of a discrete group acting continuously on a space.

The \textit{transformation groupoid} $\Gamma \rtimes_\theta X$ of the partial action $\theta$ is a $\Gamma$-graded groupoid which has $X$ as its unit space.  We define the transformation groupoid as follows: 
\[
\Gamma \rtimes_\theta X := \{(x, \gamma, y) \mid y \in X_{\gamma^{-1}},\ x = \theta_{\gamma}(y) \},
\]
\begin{align*}
\dom(x,\gamma, y) = y, && \cod(x,\gamma,y) = x, && (x, \gamma, y)(y, \delta, z) = (x, \gamma \delta, z), && (x, \gamma, y)^{-1} = (y, \gamma^{-1}, x).
\end{align*}
The unit space is $\{(x, \ep, x)\mid  x \in X \}$; we identify it with $X$ and give it the same topology as $X$. If $X$ is locally compact, Hausdorff, and totally disconnected, the transformation groupoid is ample. On $\Gamma \rtimes_\theta X$ we take the topology inherited from $X \times \Gamma \times X$. There is some redundancy in our notation, since in the expression $(x, \gamma, y)$ the element $x$ is uniquely determined by $\gamma$ and $y$, but this notation makes composition and inversion easy to visualise and causes no inconsistencies (see \cite[p. 1042]{abadie} for further details). There is a natural $\Gamma$-grading on $\Gamma \rtimes_\theta X$ specified by the continuous functor $\Gamma \rtimes_\theta X \to \Gamma$, $(x, \gamma, y) \mapsto \gamma$. Transformation groupoids have the property that the $\ep$-component is equal to the unit space.

\begin{proposition} \label{paction}
	Let $\theta$ be a partial group action of a discrete group $\Gamma$ on a totally disconnected, locally compact Hausdorff space. Then the following are equivalent:
	\begin{enumerate}[\upshape(1)]
	\item $\theta$ is a global action;
	
	\smallskip

	\item $\Gamma \rtimes_\theta X$ is strongly graded;
	
	\smallskip

	\item $A_R(\Gamma \rtimes_\theta X)$ is strongly graded.
	\end{enumerate}
\end{proposition}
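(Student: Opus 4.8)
The plan is to reduce the entire proposition to Lemma~\ref{stg} and Theorem~\ref{strong}, so that almost no new argument is required. First note that under the stated hypotheses on $X$ (totally disconnected, locally compact, Hausdorff), the transformation groupoid $\Gamma \rtimes_\theta X$ is ample, as recorded in the preliminary discussion. Hence Theorem~\ref{strong} applies directly and gives the equivalence of (2) and (3): the groupoid $\Gamma \rtimes_\theta X$ is strongly graded if and only if its Steinberg algebra $A_R(\Gamma \rtimes_\theta X)$ is strongly graded. All of the substance therefore lies in proving the equivalence of (1) and (2).

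For $(1) \Leftrightarrow (2)$, I would invoke the characterisation of strong grading for groupoids in Lemma~\ref{stg}, namely that $\G := \Gamma \rtimes_\theta X$ is strongly graded if and only if $\dom(\G_\gamma) = \G^{(0)} = X$ for every $\gamma \in \Gamma$ (item~(\ref{it3:stg}) of that lemma). The key step is then a direct computation of $\dom(\G_\gamma)$ from the definition of the transformation groupoid. By construction, the $\gamma$-component is
\[
\G_\gamma = \{(\theta_\gamma(y), \gamma, y) \mid y \in X_{\gamma^{-1}}\},
\]
and the domain map acts by $(x, \gamma, y) \mapsto y$. As $(x, \gamma, y)$ ranges over $\G_\gamma$, its domain $y$ ranges over precisely $X_{\gamma^{-1}}$, so $\dom(\G_\gamma) = X_{\gamma^{-1}}$.

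Combining these observations, $\G$ is strongly graded if and only if $X_{\gamma^{-1}} = X$ for every $\gamma \in \Gamma$. Since $\gamma \mapsto \gamma^{-1}$ is a bijection of $\Gamma$, this condition is equivalent to $X_\gamma = X$ for every $\gamma \in \Gamma$, which is exactly the definition of $\theta$ being a global action. This closes the loop $(1) \Leftrightarrow (2)$ and, together with Theorem~\ref{strong}, completes the chain of equivalences. I do not anticipate any real obstacle: the proof is an unwinding of the definition of $\Gamma \rtimes_\theta X$ followed by an appeal to two results already established. The only point requiring a little care is the index bookkeeping ($\gamma$ versus $\gamma^{-1}$) in identifying $\dom(\G_\gamma)$ with $X_{\gamma^{-1}}$, which is harmless once the bijectivity of inversion on $\Gamma$ is used.
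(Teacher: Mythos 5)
Your proposal is correct and follows essentially the same route as the paper: both establish $(2)\Leftrightarrow(3)$ by citing Theorem~\ref{strong}, and both establish $(1)\Leftrightarrow(2)$ via Lemma~\ref{stg}(3) by observing that $\dom\big((\Gamma\rtimes_\theta X)_\gamma\big)=X_{\gamma^{-1}}$. The only cosmetic difference is that you compute this domain once and read off the biconditional, whereas the paper splits it into two implications.
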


\begin{proof}
		$(1)\Rightarrow(2)$ If $\theta$ is a global action, then for every $\gamma \in \Gamma$ and $x \in X$ we have $(\theta_{\gamma}(x), \gamma, x) \in \Gamma \rtimes_\theta X$, so $x \in \dom((\Gamma \rtimes_\theta X)_\gamma)$. By Lemma \ref{stg} (3), $\Gamma \rtimes_\theta X$ is strongly graded.
		
		\smallskip
		
		$(2)\Rightarrow(1)$ Suppose $\Gamma \rtimes_\theta X$ is strongly graded. By Lemma \ref{stg} (3), for all $\gamma \in \Gamma$ and $x \in X$ we have $x \in \dom((\Gamma \rtimes_\theta X)_\gamma)$.  This implies $x \in X_{\gamma^{-1}}$ for all $x \in X$ and $\gamma \in \Gamma$. Therefore $\theta$ is a global action.
		
		\smallskip
		
		$(2) \Leftrightarrow (3)$ This is Theorem \ref{strong}.
\end{proof}

\section*{Acknowledgements}
Lisa Clark acknowledges Marsden grant 15-UOO-071 from the Royal Society of New Zealand.

Roozbeh Hazrat acknowledges the Australian Research Council grant DP160101481. A part of this work was done at the University of M\"unster, where he was a Humboldt Fellow.

Simon Rigby acknowledges the National Research Foundation of South Africa and his masters supervisor, Juana Sanchez-Ortega. A part of this work was done at the University of Wisconsin-Madison, where he was a visiting student.

The authors would like to thank Rabeya Basu for organising a very informative CIMPA workshop in Pune, June 2017, where the initial discussion of this project took place. They would also like to thank the referee for a careful reading of the paper and for comments which resulted in substantial improvement of the article.

\end{document}